\documentclass{kybernetika}

\usepackage{microtype}
\usepackage{authblk}
\usepackage[utf8]{inputenc}
\usepackage{xcolor}

\usepackage[sort]{cite}

\usepackage[frozencache,cachedir=minted-cache]{minted}
\usemintedstyle{default}
\colorlet{CodeBackground}{black!5!white}
\setminted{encoding=utf8, mathescape, bgcolor=CodeBackground, fontsize=\footnotesize}

\usepackage{amsmath, amssymb, amsthm}
\usepackage{graphicx, url}
\usepackage{appendix}
\usepackage{csquotes}
\usepackage{hyperref}
\usepackage[sort]{cleveref}
\usepackage{enumitem}

\newlist{paraenum}{enumerate}{1}
\setlist[paraenum]{wide, label=(\arabic*)}
\newlist{inparaenum}{enumerate*}{1}
\setlist[inparaenum]{label=(\arabic*)}

\usepackage{dsfont}

\usepackage{sansmath}

\usepackage{caption}
\usepackage{subcaption}
\subcaptionsetup[figure]{labelfont=rm}
\usepackage{wrapfig}

\usepackage{tikz}
\usetikzlibrary{arrows}
\usetikzlibrary{positioning}

\usetikzlibrary{cd}

\usepackage{mathtools}
\newcommand{\defas}{\coloneqq}

\newcommand{\CIperp}{\mathrel{\text{$\perp\mkern-10mu\perp$}}}
\usepackage{ifthen}
\usepackage{listofitems}
\newcommand{\CI}[2][\CIperp]{{%
  \setsepchar{{:}/{|}/{,}}
  \ignoreemptyitems
  \readlist*\mylist{#2}
  \ifthenelse{\listlen\mylist[] = 2}{\mylist[2]}{}%
  [\mylist[1,1,1] #1 \ifthenelse{\listlen\mylist[1,1] = 2}{\mylist[1,1,2]}{\mylist[1,1,1]}%
  \ifthenelse{\listlen\mylist[1] = 2}{{} \mid \mylist[1,2]}{}]%
}}

\usepackage{xstring}

\newcommand{\dperp}{\perp_d}
\newcommand{\dsepgiven}[3]{[#1 \dperp #2 | #3]}

\newcommand{\Cperp}{\perp_{C^*}}

\newcommand{\Csepgiven}[3]{\CI[\Cperp]{#1,#2|#3}}
\newcommand{\notCsepgiven}[3]{\CI[\not\Cperp]{#1,#2|#3}}
\newcommand{\globalCsep}[2]{\mathcal{M}_\ast(#1, #2)}

\usepackage{etoolbox}
\NewDocumentCommand{\PolyCI}{e{_}m}{S_{#1}\CI{#2}}

\DeclareMathOperator{\cone}{cone}

\newcommand{\FN}[1]{\textcolor{orange}{FN: #1}}

\DeclareMathOperator{\pa}{pa}

\newcommand{\indep}{\mathrel{\text{$\perp\mkern-10mu\perp$}}}

\newcommand{\ot}{\leftarrow}

\newcommand{\cg}{\mathcal{G}}
\newcommand{\tr}{\mathrm{tr}}
\newcommand{\cgtr}{\mathcal{G}^{\tr}_C}
\newcommand{\critdag}[1]{\mathcal{G}^{\ast}_{#1}}

\newcommand{\cf}{\mathcal{F}}
\newcommand{\picrit}[3][\pi]{{#1}^{#2 #3}_{\text{crit}}}

\newcommand{\initial}[1][C]{\mathrm{in}_{#1}}

\DeclareMathOperator{\Newt}{Newt}

\numberwithin{equation}{section}
\theoremstyle{plain}

\newtheorem{theorem}{Theorem}[section]

\newtheorem{conjecture}[theorem]{Conjecture}
\newtheorem{corollary}[theorem]{Corollary}
\newtheorem{lemma}[theorem]{Lemma}
\newtheorem{proposition}[theorem]{Proposition}

\newtheorem*{theorem*}{Theorem}
\newtheorem*{problem*}{Problem}

\theoremstyle{definition}
\newtheorem{remark}[theorem]{Remark}
\newtheorem{definition}[theorem]{Definition}

\newtheorem{example}[theorem]{Example}

\begin{document}

\pagestyle{myheadings}
\title{Polyhedral aspects of maxoids}

\author{Tobias Boege, Kamillo Ferry, Benjamin Hollering, Francesco Nowell}

\contact{Tobias}{Boege}%
{Department of Mathematics and Statistics,
UiT -- The Arctic University of Norway,
N-9037 Tromsø, Norway}%
{post@taboege.de}

\contact{Kamillo}{Ferry}%
{Institut für Mathematik,
Technische Universität Berlin,
Straße des 17.~Juni~136,
D-10623 Berlin, Germany}%
{ferry@math.tu-berlin.de}

\contact{Benjamin}{Hollering}%
{Max Planck Institute for Mathematics in the Sciences,
Inselstraße~22,
D-04103 Leipzig, Germany}%
{benjamin.hollering@mis.mpg.de}

\contact{Francesco}{Nowell}%
{Institut für Mathematik,
Technische Universität Berlin,
Straße des 17.~Juni~136,
D-10623 Berlin, Germany}%
{nowell@math.tu-berlin.de}

\markboth{T.~Boege, K.~Ferry, B.~Hollering, and F.~Nowell}{Polyhedral aspects of maxoids}

\maketitle

\begin{abstract}
The conditional independence (CI) relation of a distribution in a max-linear Bayesian network depends on its weight matrix through the $C^\ast$-separation criterion. These CI~models, which we call \emph{maxoids}, are compositional graphoids which are in general not representable by Gaussian random variables. We prove that every maxoid can be obtained from a transitively closed weighted DAG and show that the stratification of generic weight matrices by their maxoids yields a polyhedral~fan. We also use this connection to polyhedral geometry to develop an algorithm for solving the conditional independence implication problem for maxoids.
\end{abstract}

\keywords{cascading failures, polyhedral fan, graphical model, conditional independence}
\classification{62R01, 14T90, 53C35}

\section{Introduction}
Linear structural equation models, also known as Bayesian networks, are of critical importance in modern data science and statistics through their applications to  causality~\cite{pearl2009causality} and probabilistic inference~\cite{koller2009probabilistic}. These statistical models use directed acyclic graphs (DAGs) to represent causal relationships and conditional independencies between random variables. Recently, there has been a focus on developing graphical models which are able to capture causal relations between extreme events. The~two main approaches employ \emph{H\"ussler--Reiss distributions} \cite{engelke2024graphicalmodelsmultivariateextremes, engelke2025extremesstructuralcausalmodels} and max-linear Bayesian networks, the latter of which are the main subject of this paper.

\enlargethispage{1.5em}
\emph{Max-linear Bayesian networks} (MLBNs), were introduced in \cite{gissibl2018max} to model \emph{cascading failures}. They are used in areas where these failures lead to catastrophic events, such as financial risk and water contamination \cite{leigh2019framework, rochet1996interbank}. A random vector $X = (X_1, \ldots, X_n)$ is distributed according to the max-linear model on a DAG $\cg$ if it satisfies the system of \emph{recursive structural equations}
\begin{equation}
\label{eqn:mlbn}
    X_j =\bigvee_{i \in \pa(j)}\mathrm{e}^{c_{ij}}X_i \vee Z_j, ~ \quad ~  Z_j \geq 0\text{ and } c_{ij}\in\mathbb{R},
\end{equation}
where $\vee = \max$, the exponentials $\mathrm{e}^{c_{ij}}$ are positive edge weights, $\pa(j)$ is the set of parents of $j$ in $\cg$, and the $Z_j$ are independent, atom-free, continuous random variables.

The structural equations mimic Bayesian networks in the extreme value setting. Despite this similarity, the conditional independence (CI) theory of MLBNs turns out to be more subtle in certain aspects than that of classical Bayesian networks which are governed by the well-known d-separation criterion. In addition to the d-separations of the DAG, a max-linear model may satisfy other CI~statements which depend on the weight matrix~$C$ whose entries $c_{ij}$ appear in~\eqref{eqn:mlbn}. Am\'{e}ndola et~al.~\cite{MaxLinearCI} observed that multiple distinct CI~structures can arise for the same DAG, each for a set of $C$-matrices with positive Lebesgue measure. They introduced the graphical $\ast$-separation criterion which is complete but not strongly complete for CI~implication in MLBNs, and the $C^\ast$-separation criterion which takes $C$ into account and completely characterizes the CI~structure of an MLBN. Moreover, the following chain of implications from d- over $\ast$- to $C^\ast$-separation is valid for all MLBNs:
\[
\CI[\perp_d]{i,j|L} \implies \CI[\perp_\ast]{i,j|L} \implies \CI[\Cperp]{i,j|L}.
\]
In this paper, we focus on the CI~structures which arise from $C^\ast$-separation since they are the most refined according to these implications and MLBNs are generically faithful to them.

Max-linear Bayesian networks come with a rich polyhedral structure which becomes apparent after applying a logarithmic transformation to study \(\log{X}\).
This has been done before by Tran~\cite{Tran:2022} and Am\'{e}ndola and Ferry~\cite{Amendola.Ferry:2026}
to obtain results about MLBNs using methods from polyhedral and tropical geometry.
Our point of departure is the logarithmic version of the structural equations in \eqref{eqn:mlbn}:
\begin{equation}
\label{eqn:log-mlbn}
    \log X_j =\bigvee_{i \in \pa(j)} (c_{ij}+ \log X_i) \vee \log Z_j.
\end{equation}
The CI structure of an MLBN is entirely determined by the coefficient matrix~$C$. We call $\globalCsep{\cg}{C} \defas \{\, \CI{I,J|L} : \CI[\Cperp]{I,J|L}  \text{ in } (\cg,C) \,\}$ the \emph{maxoid} associated to the DAG $\cg$ with coefficient matrix~$C$ and note that this is essentially the \emph{global Markov property} of the weighted graph $(\cg, C)$. The mechanism of $C^\ast$-separation by which the maxoid can be read off from the weighted graph is explained in detail in \Cref{sec:poly-geo-star-sep}.

The first main result of this paper endows maxoids with a geometric structure. Namely, the set of distinct maxoids associated to a fixed DAG $\cg$ are in correspondence with the cones of a complete fan for which we provide an explicit representation via inequalities.

\begin{theorem*}
For any DAG $\cg$ there is a hyperplane arrangement $\mathcal{H}_\cg \subseteq \mathbb{R}^E$ such that for every $C \in \mathbb{R}^E \setminus \mathcal{H}_\cg$ the set
\begin{align*}
    \cone_\cg(C) := \big\{ C' \in \mathbb{R}^E \setminus \mathcal{H}_\cg : \globalCsep{\cg}{C} = \globalCsep{\cg}{C'}\big\}
\end{align*}
is a full-dimensional open polyhedral cone. The collection of all closures of such cones for a fixed $\cg$ forms a complete polyhedral fan $\cf_\cg$ in $\mathbb{R}^E$. Moreover the map which sends a cone of $\cf_\cg$ to its maxoid is an inclusion-reversing surjection.
\end{theorem*}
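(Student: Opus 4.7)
The plan is to extract from the $C^\ast$-separation criterion recalled in \Cref{sec:poly-geo-star-sep} a finite family $\mathcal{L}_\cg$ of linear functionals on $\mathbb{R}^E$ whose sign patterns completely determine the maxoid $\globalCsep{\cg}{C}$. Since $C^\ast$-separation ultimately reduces to tropical comparisons of weighted path sums in $\cg$, the natural candidates for $\mathcal{L}_\cg$ are the differences $\ell_{P,Q}(C) = \sum_{e \in P} c_e - \sum_{e \in Q} c_e$ for pairs $P,Q$ of directed paths in $\cg$ with common endpoints, supplemented with any ancillary functionals dictated by the precise formulation of the criterion. Setting $\mathcal{H}_\cg \defas \bigcup_{\ell \in \mathcal{L}_\cg} \{\ell = 0\}$ yields a central hyperplane arrangement in $\mathbb{R}^E$, since each defining hyperplane passes through the origin.

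The first task is to prove that any $C, C' \in \mathbb{R}^E \setminus \mathcal{H}_\cg$ sharing the sign vector $(\mathrm{sign}\,\ell(C))_{\ell \in \mathcal{L}_\cg}$ yield identical $C^\ast$-separations; after pruning $\mathcal{L}_\cg$ to those functionals whose sign genuinely influences the maxoid, one also obtains the converse, so $\cone_\cg(C)$ coincides with the open chamber of $\mathcal{H}_\cg$ containing $C$. This is manifestly a full-dimensional open polyhedral cone, being the intersection of the open halfspaces $\{\ell > 0\}$ or $\{\ell < 0\}$ matching the signs of $\ell(C)$. The closures of all chambers together with all their faces then constitute the fan of the central arrangement $\mathcal{H}_\cg$ — a standard construction automatically producing a complete polyhedral fan $\cf_\cg$ in $\mathbb{R}^E$.

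For the final clause, I would assign to each $\sigma \in \cf_\cg$ the common maxoid $\cm_\sigma$ of any $C$ in $\mathrm{relint}(\sigma)$; this is well defined because each $\ell \in \mathcal{L}_\cg$ has constant sign (or vanishes identically) on $\mathrm{relint}(\sigma)$. Surjectivity onto the set of maxoids realised by $\cg$ is immediate from the definition. For the inclusion-reversing property, passing from $\tau$ to a face $\sigma \subseteq \tau$ replaces finitely many strict inequalities by equalities; in the tropical picture each such equality corresponds to additional paths tying for the maximum, which can only produce new CI statements, so $\cm_\sigma \supseteq \cm_\tau$.

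The main obstacle I anticipate is exactly this monotonicity claim: one must argue directly from the combinatorial description of $C^\ast$-separation that every CI relation valid on an open chamber survives any specialization of $C$ onto a face, with possibly new relations appearing. This should follow from an upper-semicontinuity property of tropical maxima of path sums, but will require careful bookkeeping of how the critical paths rearrange when ties emerge. Once that step is in hand, everything else — the cone structure, the central hyperplane arrangement, and the resulting fan — is a routine consequence of standard facts about central arrangements.
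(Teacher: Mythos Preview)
Your overall strategy---linear functionals $\ell_{\pi,\pi'}$ from path-weight differences, their sign patterns, and the resulting central arrangement $\mathcal{H}_\cg$---matches the paper's setup, and the forward implication (same sign vector $\Rightarrow$ same critical paths $\Rightarrow$ same maxoid) is correct. The gap is in the converse, where you assert that after a global ``pruning'' of $\mathcal{L}_\cg$ the set $\cone_\cg(C)$ coincides with an open chamber of $\mathcal{H}_\cg$. This is false in general: the maxoid depends only on \emph{which} path in each $P(i,j)$ is maximal, not on the full ordering of path weights. If three paths $\pi_1,\pi_2,\pi_3$ join $i$ to $j$, the chambers $\{\omega(\pi_1)>\omega(\pi_2)>\omega(\pi_3)\}$ and $\{\omega(\pi_1)>\omega(\pi_3)>\omega(\pi_2)\}$ differ in the sign of $\ell_{\pi_2,\pi_3}$ yet share the critical path $\pi_1$ and hence the maxoid. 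No global pruning removes $\ell_{\pi_2,\pi_3}$, since that functional is decisive in the region where $\pi_2$ or $\pi_3$ is critical. Thus $\cone_\cg(C)$ is typically a \emph{union} of several arrangement chambers, and $\cf_\cg$ is a strict coarsening of the fan of $\mathcal{H}_\cg$, not the arrangement fan itself.

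What is missing, then, is a reason why this union of chambers is again convex. The paper supplies this via the lemma that, for generic weights, equal maxoids are \emph{equivalent} to equal critical paths (the nontrivial direction constructs an explicit $C^\ast$-separation distinguishing two weighted DAGs with different critical paths). This immediately yields the description
\[
\cone_\cg(C)=\bigcap_{i,j}\ \bigcap_{\pi\in P(i,j)\setminus\{\picrit{i}{j}\}}\bigl\{\,\omega(\picrit{i}{j})>\omega(\pi)\,\bigr\},
\]
which is visibly an open polyhedral cone. That the closures of these cones form a fan is then obtained either by a direct facet analysis (the maxoid on a facet is shown to be the union of the two adjacent generic maxoids) or, equivalently, by identifying $\cf_\cg$ with the normal fan of a Minkowski sum of Newton polytopes of path polynomials---a fan which is in general \emph{not} a hyperplane-arrangement fan. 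Your intuition for the inclusion-reversing clause (ties only create new CI statements) is on target and matches the paper's argument, but it lives downstream of this convexity step.
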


One immediate consequence of the above theorem together with the results of \cite{MaxLinearCI} is that the maximal cones of $\cf_\cg$ correspond to the distinct CI structures which can arise from a max-linear Bayesian network with positive Lebesgue measure for the choice of~$C$. In~this sense, the maximal cones correspond to the \emph{generic} CI structures of an MLBN supported on~$\cg$. Similarly, we call a weight matrix $C$ \emph{generic} if it does not lie on~$\mathcal{H}_\cg$. As~we will show in \Cref{sec:poly-geo-star-sep}, if there exist two nodes $i, j \in V(\cg)$ such that there are at least two distinct paths between $i$ and $j$, then $\cf_\cg$ has at least two distinct full-dimensional cones. This provides a strong contrast to classical linear structural equation models which are generically faithful to d-separation, and thus almost every distribution in the model exhibits the same CI~structure; cf.~\cite{lauritzen-graphical-models}.
Our results also elucidate which CI structures may arise from a given graph and when two graphs exhibit the same generic CI structure. This is critical for determining if the graph structure may be recovered using only conditional independencies as is typically done in constraint-based causal discovery algorithms, e.g.,\cite{spirtes2000causation}.

The polyhedral fan $\cf_\cg$ also provides a practical approach to the implication problem for conditional independence in MLBNs; see Studený's book \cite{Studeny} for an introduction to the implication problem. The set of weight matrices whose maxoids contain a given CI~statement is a union of polyhedral cones from~$\cf_\cg$. A set of this form is called a \emph{polyhedral set}. %
To decide a CI~implication for maxoids means checking whether one polyhedral set is contained in another. We show how modern \emph{satisfiability modulo theories (SMT)} solvers, which are developed primarily for formal verification purposes, can be applied to this formulation. They solve a series of linear programming problems and either conclusively determine that the implication is true or produce an explicit counterexample in the form of a weight matrix~$C$. Furthermore, we use the combinatorics of $C^\ast$-separation to establish general closure properties which every maxoid~satisfies.

\begin{theorem*}
Maxoids are compositional graphoids. Moreover, they satisfy the ``amalgamation property for blocking sets'':
\begin{align*}
  \CI{i,j|KM} \land \CI{i,j|LM} &\;\implies\; \CI{i,j|KLM},
\end{align*}
as well as the following strengthening of the Spohn property from \cite{SpohnProperties}:
\begin{align*}
  \CI{i,j|klM} \land \CI{k,l|iM} \land \CI{k,l|jM} &\;\implies\; \CI{k,l|M}, \\
  \CI{i,j|klM} \land \CI{k,l|iM} \land \CI{k,l|M} &\;\implies\; \CI{k,l|jM}.
\end{align*}
\end{theorem*}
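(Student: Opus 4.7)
The plan is to reduce every closure property to the combinatorial description of $C^\ast$-separation given in \Cref{sec:poly-geo-star-sep} and verify each axiom by analyzing paths in the weighted DAG $(\cg, C)$. Since $\Csepgiven{I}{J}{L}$ is the conjunction of the pairwise statements $\Csepgiven{i}{j}{L}$ over $i \in I$ and $j \in J$, the axioms of symmetry, decomposition, and composition follow immediately from this pairwise structure. The remaining graphoid axioms—weak union, contraction, and intersection—are more substantive; I would handle them in the standard style used for $\ast$-separation and d-separation, by taking a hypothetical $C^\ast$-connecting path between $i$ and $j$ and either truncating it at a vertex in the relevant conditioning set or concatenating it with another connecting path through such a vertex to witness one of the hypotheses. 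The key input is that the blocking conditions of $C^\ast$-separation depend only on local conditions at each vertex of the path together with weight comparisons recorded in $C$.

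The amalgamation property $\CI{i,j|KM} \land \CI{i,j|LM} \implies \CI{i,j|KLM}$ is the first genuinely new axiom. My plan is to contrapose: starting from a hypothetical active $C^\ast$-connecting path $\pi$ between $i$ and $j$ given $KLM$, I would show that $\pi$ remains $C^\ast$-connecting under at least one of the smaller conditioning sets $KM$ or $LM$. Since $C^\ast$-separation assigns each vertex on $\pi$ a local role (collider versus non-collider, with a weight-dependent condition), shrinking the conditioning set can only unblock the path at non-collider vertices and can only reblock it at descendants of colliders. A short argument distinguishing whether the critical vertices lie in $K$, in $L$, or in neither should then produce the required contradiction with one of the two hypotheses.

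The main obstacle will be the two Spohn-type strengthenings, which entangle four variables through their critical paths in a non-symmetric way. In each case my plan is to argue by contraposition: assume the conclusion fails, obtain an active $C^\ast$-path $\pi$ between $k$ and $l$ avoiding the relevant conditioning set, and combine it with the connecting paths between $k,l$ and $i,j$ forced by the negations of the other premises. The goal is to concatenate these paths into an active $C^\ast$-connecting path between $i$ and $j$ given $klM$, contradicting the hypothesis $\CI{i,j|klM}$. The delicate part is the case analysis that tracks how $\pi$ meets $i$ and $j$ (via either a direct subpath or through a collider whose descendant is conditioned on) and how the entries of $C$ govern which concatenations remain active; I expect the sharper distinction between $\ast$-separation and $C^\ast$-separation to be essential here, since the Spohn-type implications are known to fail for general compositional graphoids.
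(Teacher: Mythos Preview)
Your plan for the compositional graphoid axioms is broadly in line with the paper, but from the amalgamation step onward you are reasoning about $C^\ast$-separation as if it were d-separation on paths in $\cg$, and that is where the argument goes off the rails. In $C^\ast$-separation the $\ast$-connecting path lives in the \emph{critical DAG} $\critdag{C,L}$, not in $\cg$; it has at most five nodes and at most one collider (see \Cref{fig:*-sep}). There is no notion of ``descendants of colliders'' here, and shrinking the conditioning set does not merely change the status of vertices on $\pi$: it changes the edge set of the critical DAG itself. The paper's proof of amalgamation hinges on two facts you do not mention: the monotonicity $E(\critdag{C,M}) \subseteq E(\critdag{C,M'})$ for $M' \subseteq M$ (so the edges of $\pi$ survive when passing from $KLM$ to $KM$ or $LM$), and the single-collider restriction, which guarantees that the unique collider on $\pi$ lies in $KM$ or in $LM$. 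Without these, your ``short argument distinguishing whether the critical vertices lie in $K$, $L$, or neither'' has no traction.

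For the Spohn-type implications your outline is logically garbled and misses the main idea. You write that you will use ``the connecting paths between $k,l$ and $i,j$ forced by the negations of the other premises'', but in the contrapositive you \emph{keep} the premises $\CI{k,l|iM}$ and $\CI{k,l|jM}$ as separations; nothing is negated there and no connecting paths are forced. What these premises actually tell you is that the $\ast$-connecting path $\pi$ between $k$ and $l$ in $\critdag{C,M}$ (coming from $\neg\CI{k,l|M}$) must lose an edge when $i$ (respectively $j$) is added to the conditioning set; hence $i$ and $j$ each lie on some critical path in $\cg$ underlying an edge of $\pi$. The paper then invokes a dedicated lemma (\Cref{lemma:SplitCritical}): if a critical path for an edge $x \to y$ of $\critdag{C,M}$ passes through $i$, then both $x \to i$ and $i \to y$ are edges of $\critdag{C,M}$. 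This is what manufactures the $\ast$-connecting path between $i$ and $j$ in $\critdag{C,klM}$, after a case split on whether the same edge of $\pi$ is destroyed by $i$ and by $j$. Your proposed ``concatenation'' of active paths does not correspond to this mechanism, and without the splitting lemma and the careful bookkeeping that $k,l$ can be assumed absent from the relevant critical paths, the argument will not close.
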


The remainder of this paper is organized as follows. In \Cref{sec:poly-geo-star-sep} we recall the details of the $C^\ast$-separation criterion and use this to provide an explicit description of the linear inequalities which define $\cone_\cg(C)$ via the \emph{critical paths} of $\cg$. We then relate the set of maxoids arising from a DAG $\cg$ to the cones of the associated polyhedral~fan. In~\Cref{sec:maxoid-polytope} we introduce a polytope associated to $\cg$ which can be used to enumerate the generic maxoids, and discuss the similarities between this construction and that of the Gröbner fan of an ideal. \Cref{sec:ci-axioms} deals with the problem of CI~implication. We first present an SMT-based algorithm for solving implication problems which we implemented in the \verb|julia| programming language. Then we prove general closure properties of maxoids. Finally, we observe that maxoids are in general not representable by regular Gaussian distributions. These observations set maxoids apart from d-separation.

This paper is accompanied by snippets of code collected in a \verb|julia| package called \verb|Maxoids.jl| \cite{Maxoids.jl}.
It implements algorithms for generating and manipulating the conditional independence structures of MLBNs and
the related polyhedral objects we define. It builds upon the functionality for graphs, polyhedral geometry,
and conditional independence present in the computer algebra system \verb|OSCAR| \cite{OSCAR,OSCAR-book} as well as the \verb|Satisfiability.jl| package \cite{Satisfiability.jl} and the \verb|Z3| SMT solver \cite{Z3}.

\pagebreak
\noindent%
Thus, in order to run the code it is necessary to install both packages by running
\begin{minted}{julia-repl}
julia> using Pkg
julia> Pkg.add(["Oscar", "Maxoids"])
\end{minted}
and load them at the start of every session by calling
\begin{minted}{julia-repl}
julia> using Oscar, Maxoids
\end{minted}

The code contained in the \verb|julia| package and any additional data is provided on Zenodo under the record
 \url{https://zenodo.org/records/17737553}.

\section{The Polyhedral Geometry of \texorpdfstring{\(C^\ast\)}{C-star}-separation}
\label{sec:poly-geo-star-sep}

Let $\cg = (V, E)$ be a DAG on $\lvert V\rvert=n$ vertices and denote the set of coefficient matrices supported on $\cg$ by~$\mathbb{R}^E$, i.e., all $n \times n$ matrices $C$ with $c_{ij} = - \infty$ if $i \to j \notin E$ and $c_{ij}\in \mathbb{R}$ otherwise. We recall that a random vector $X$ is distributed according to the max-linear model on $\cg$ if it satisfies \cref{eqn:log-mlbn}. By \cite[Theorem 2.2]{gissibl2018max}, this is equivalent to $X$ satisfying the equation $ \log X = (C^\ast)^T \log Z$ where the logarithm is applied entrywise and the matrix-vector product is done in $\max$-plus arithmetic.
The matrix $C^\ast$ is the \emph{Kleene star} of $C$ whose entries are given by

\begin{align} \label{eqn:kleenestarentry}
(C^\ast)_{ij} = \max_{\pi \in P(i,j)} \sum_{e \in \pi} c_e \ ,
\end{align}
where $P(i, j)$ denotes the set of all directed paths from $i$ to $j$ in $\cg$. For a path $\pi \in P(i,j)$, we refer to the quantity $\sum_{e \in \pi} c_e$ as the \emph{weight} of $\pi$ and denote it by $\omega_C(\pi).$

A path $\pi'$ is \emph{critical} if $\omega_C(\pi') = \max_{\pi \in P(i, j)} \omega_C(\pi)$.
If there is a unique critical path between every pair of nodes $i, j$ then we say that $C$ is \emph{generic} and denote the unique path from $i$ to $j$ by $\picrit{i}{j}(\cg, C)$, omitting the pair $(\cg, C)$ when it is clear from context. Note that if $C$ is not generic, then its entries satisfy some non-trivial linear equation of the form $\omega_C(\pi) = \omega_C(\pi')$ for distinct $\pi, \pi' \in P(i,j)$. Hence, the set of generic matrices is the complement of a hyperplane arrangement $\mathcal{H}_\cg$ whose defining equations depend on~$\cg$. We~are now ready to introduce $C^\ast$-separation.

\begin{figure}
\begin{subfigure}[t]{0.10\linewidth}
\centering
\begin{tikzpicture}[inner sep=1pt]
\node[draw, circle] (i) at (0,.7) {\strut$i$};
\node[draw, circle] (j) at (0,-.7) {\strut$j$};
\draw[->] (i) -- (j);
\end{tikzpicture}
\caption{}
\label{fig:*-sep:a}
\end{subfigure}
\hfill
\begin{subfigure}[t]{0.15\linewidth}
\centering
\begin{tikzpicture}[inner sep=1pt]
\node[draw, circle] (i) at (-.5,-.7) {\strut$i$};
\node[draw, circle] (p) at (  0,.7) {\strut$p$};
\node[draw, circle] (j) at (+.5,-.7) {\strut$j$};
\draw[->] (p) -- (i);
\draw[->] (p) -- (j);
\end{tikzpicture}
\caption{}
\label{fig:*-sep:b}
\end{subfigure}
\hfill
\begin{subfigure}[t]{0.15\linewidth}
\centering
\begin{tikzpicture}[inner sep=1pt]
\node[draw, circle] (i) at (-.5,.7) {\strut$i$};
\node[draw, circle, fill=orange!40!white] (l) at (  0,-.7) {\strut$\ell$};
\node[draw, circle] (j) at (+.5,.7) {\strut$j$};
\draw[->] (i) -- (l);
\draw[->] (j) -- (l);
\end{tikzpicture}
\caption{}
\label{fig:*-sep:c}
\end{subfigure}
\hfill
\begin{subfigure}[t]{0.2\linewidth}
\centering
\begin{tikzpicture}[inner sep=1pt]
\node[draw, circle] (i) at (-.5,-.7) {\strut$i$};
\node[draw, circle] (p) at (  0,+.7) {\strut$p$};
\node[draw, circle, fill=orange!40!white] (l) at (+.5,-.7) {\strut$\ell$};
\node[draw, circle] (j) at (1.0,+.7) {\strut$j$};
\draw[->] (p) -- (i);
\draw[->] (p) -- (l);
\draw[->] (j) -- (l);
\end{tikzpicture}
\caption{}
\label{fig:*-sep:d}
\end{subfigure}
\hfill
\begin{subfigure}[t]{0.2\linewidth}
\centering
\begin{tikzpicture}[inner sep=1pt]
\node[draw, circle] (i) at (-.5,-.7) {\strut$i$};
\node[draw, circle] (p) at (  0,+.7) {\strut$p$};
\node[draw, circle, fill=orange!40!white] (l) at ( .5,-.7) {\strut$\ell$};
\node[draw, circle] (q) at (1.0,+.7) {\strut$q$};
\node[draw, circle] (j) at (1.5,-.7) {\strut$j$};
\draw[->] (p) -- (i);
\draw[->] (p) -- (l);
\draw[->] (q) -- (l);
\draw[->] (q) -- (j);
\end{tikzpicture}
\caption{}
\label{fig:*-sep:e}
\end{subfigure}
\caption{%
The types of $\ast$-connecting paths between $i$ and $j$ given $L$ in a critical DAG~$\critdag{C,L}$. The~colored colliders $\ell$ must belong to $L$; the non-colliders $p,q$ must not belong to~$L$.}
\label{fig:*-sep}
\end{figure}

\begin{definition} \label{def:*-sep}
Let $(\cg, C)$ be a weighted DAG with vertex set~$V$ and \(L \subseteq V\). The \emph{critical DAG} $\critdag{C,L}$ is the DAG on $V$ such that $i \to j \in E(\critdag{C,L})$ whenever $i$ and $j$ are connected via a directed path, and no critical path from $i$ to $j$ in $\cg$ intersects~$L$.

Two nodes $i$ and $j$ are \emph{$C^\ast$-connected given $L$} if there exists a path from $i$ to $j$ in $\critdag{C, L}$ of the form pictured in \Cref{fig:*-sep}. If no such path exists, then $i$ and $j$ are \emph{$C^\ast$-separated given $L$} which is denoted $\Csepgiven{i}{j}{L}$.
\end{definition}

\begin{theorem}[{\cite[Theorem~6.18]{MaxLinearCI}}]
Let $(\cg, C)$ be a weighted DAG and $X$ be a random vector distributed according to the max-linear model on $(\cg, C)$. Then
\[
  \Csepgiven{i}{j}{L} \implies \CI{i, j | L}.
\]
Moreover, the converse holds for all but a Lebesgue null set of weight~matrices~$C$.
\end{theorem}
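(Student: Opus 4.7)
The plan is to analyze the joint distribution of $X$ through the explicit max-plus representation
\[
  \log X_v \;=\; \max_{k} \bigl( (C^\ast)_{kv} + \log Z_k \bigr),
\]
which follows from $\log X = (C^\ast)^{T} \log Z$ and~\eqref{eqn:kleenestarentry}. Since the $Z_k$ are continuous and atom-free, the argmax in each such expression is almost surely unique, and for each vertex~$v$ the realizing index singles out a critical path from some source $k$ to~$v$. Consequently the joint distribution of $X$ decomposes over a piecewise-linear partition of the noise space $\mathbb{R}^V$ into full-dimensional cells, one for every configuration of realizing sources $(k_v)_{v \in V}$, and within each cell the map from noise to $\log X$ is affine with Jacobian supported on the chosen critical paths.

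For the forward direction, I would fix a conditioning value $x_L$ and show that on each cell the conditional density of $(X_i, X_j)$ given $X_L = x_L$ factorizes. The absence of a $\ast$-connecting configuration of any of the five shapes in \Cref{fig:*-sep} forces the noise sources feeding $X_i$ and $X_j$ to be either disjoint or already pinned down by the realized values of $X_L$ on this cell. A Fubini-style argument then separates the density cell-by-cell, and summing over the partition preserves factorization, yielding $\CI{i,j|L}$.

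For the converse, I would exhibit a positive-probability conditional dependence whenever $i$ and $j$ are $\ast$-connected given $L$. Pick such a path and identify a distinguished noise source $Z_k$ as follows: in types~(a) and~(b) it is the common non-collider ancestor on the path, and in~(c)--(e) it is an ancestor whose critical routes simultaneously reach $i$, $j$, and the intervening collider(s) in~$L$. On the full-dimensional event that all relevant critical paths realize their argmaxes at the vertices of $\{i,j\} \cup L$, a perturbation of $\log Z_k$ induces correlated changes in $X_i$ and $X_j$ that remain compatible with a fixed $X_L$. This produces a non-zero conditional covariance, and the genericity assumption $C \notin \mathcal{H}_\cg$ rules out accidental cancellations among path weights.

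The main obstacle is the forward case analysis for configurations (c)--(e), where a collider $\ell \in L$ plays a dual role: conditioning on $X_\ell$ reveals information about its sourcing noise variable, but only through the critical path leading to~$\ell$. The subtle point is that the critical-DAG condition of \Cref{def:*-sep}---an edge $i \to j$ is present in $\critdag{C,L}$ precisely when no critical path from~$i$ to~$j$ intersects~$L$---is tailored exactly to this observation, and one must verify that non-critical paths through $L$ cannot generate dependencies surviving on a positive-measure event. This requires careful bookkeeping of which linear inequalities among the $\log Z_k$ are imposed within each cell of the noise-space partition, and is the technical core of the argument.
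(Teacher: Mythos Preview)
The paper does not prove this statement: it is imported verbatim as \cite[Theorem~6.18]{MaxLinearCI} and used as a black box for the subsequent polyhedral analysis. So there is no in-paper proof to compare against, and your proposal should be measured against the original argument in~\cite{MaxLinearCI}.

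Your outline is in the right spirit---the Kleene-star representation and the partition of noise space into argmax cells are indeed the starting point there---but two steps are genuine gaps rather than routine bookkeeping. First, in the forward direction you write that ``summing over the partition preserves factorization''. This is false in general: a mixture $\sum_k p_k\, f_k^{(i)}(x_i\mid x_L)\, f_k^{(j)}(x_j\mid x_L)$ does not factor unless the mixing index is itself measurable with respect to~$X_L$. You therefore need to show that the relevant cell of the argmax partition is determined by $X_L$ alone once $C^\ast$-separation holds, and this is exactly where the definition of $\critdag{C,L}$ does non-trivial work; it is not something that drops out of a Fubini argument.

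Second, your mechanism for the converse in the collider types~(c)--(e) is misconceived. You propose to perturb a single noise source $Z_k$ that ``simultaneously reaches $i$, $j$, and the intervening collider''. But in type~(c), say $i\to\ell\leftarrow j$ with $\ell\in L$, there is no such common ancestor: the dependence between $X_i$ and $X_j$ arises not from a shared source but from the \emph{constraint} $\log X_\ell = \max(c_{i\ell}+\log X_i,\,c_{j\ell}+\log X_j,\,\log Z_\ell)$ imposed by conditioning. Knowing $X_\ell$ and, say, that the maximum is attained on the $i$-branch pins $X_i$ while simultaneously bounding $X_j$ from above; this selection effect is the source of the dependence and cannot be captured by wiggling one $Z_k$. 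The proof in~\cite{MaxLinearCI} handles this via an explicit analysis of which argmax is realized at the collider, not via a common-ancestor perturbation.
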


$C^\ast$-separation generally entails more CI~statements than d-separation. In particular note that a $\ast$-connecting path can have at most one collider in its conditioning set whereas d-separation allows any number of colliders in a connecting path. Moreover, it suffices to block only a single critical path from $i$ and $j$ in order to separate them. %

\begin{example} \label{eg:diamond}
Consider the diamond graph $\cg$ with weight matrix $C$: \\[.5em]
\noindent\begin{minipage}{\textwidth}
\centering
\begin{minipage}[c]{\dimexpr0.3\textwidth}
\begin{tikzpicture}[inner sep=1pt, scale=0.9]
\node[draw, circle] (1) at (0,1) {\strut$1$};
\node[draw, circle] (2) at (-1,0) {\strut$2$};
\node[draw, circle] (3) at (1,0) {\strut$3$};
\node[draw, circle] (4) at (0,-1) {\strut$4$};
\draw[->] (1) -- (2);
\draw[->] (1) -- (3);
\draw[->] (2) -- (4);
\draw[->] (3) -- (4);
\end{tikzpicture}
\end{minipage}
\begin{minipage}[c]{\dimexpr0.3\textwidth}
\vfil
\[
  C = \begin{pmatrix}
  -\infty & c_{12} & c_{13} & -\infty \\
  -\infty & -\infty & -\infty & c_{24} \\
  -\infty & -\infty & -\infty & c_{34}\\
  -\infty & -\infty & -\infty & -\infty
  \end{pmatrix}
\]
\vfil
\end{minipage}
\end{minipage} \\[.5em]
Observe that $P(1, 4) = \{ \pi_2, \pi_3\}$ where $\pi_i = 1 \to i \to 4$. As $\dsepgiven{1}{4}{\{2,3\}}$ holds, the corresponding CI statement $\CI{1,4|2,3}$ holds in an MLBN on $\cg$ for any coefficient matrix $C$.
We now show that the $C^\ast$-criterion gives rise to additional conditional independence, which depends on the critical path structure of $(\cg,C)$.

If $C$ satisfies $\omega_C(\pi_2) > \omega_C(\pi_3)$, then $\critdag{C, \{ 2\}}$ is exactly the diamond above because $\pi_2$ is a critical path from $1$ to $4$ which intersects the conditioning set $\{2\}$. Since neither $\pi_2$ nor $\pi_3$ are of the forms displayed in \Cref{fig:*-sep}, this MLBN satisfies~$\Csepgiven{1}{4}{2}$. Conversely, the graph $\critdag{C, \{ 3\}}$ contains the edge $1 \rightarrow 4$ due to the fact that the $\pi_2 = 1 \rightarrow 2 \rightarrow 4$ is not blocked by $\{3\}$. The edge $1 \rightarrow 4$ is a path of type (a) in $\critdag{C, \{ 3\}}$ so by \Cref{def:*-sep}, $\notCsepgiven{1}{4}{3}$.

\begin{center}
\begin{minipage}[c]{\dimexpr0.4\textwidth}
\begin{center}
  $\critdag{C,\{2\}}$
\end{center}
\begin{center}
    \begin{tikzpicture}[inner sep=1pt, scale=0.9]
\node[draw, circle] (1) at (0,1) {\strut$1$};
\node[draw, circle, fill=orange!40!white] (2) at (-1,0) {\strut$2$};
\node[draw, circle] (3) at (1,0) {\strut$3$};
\node[draw, circle] (4) at (0,-1) {\strut$4$};
\draw[->] (1) -- (2);
\draw[->] (1) -- (3);
\draw[->] (2) -- (4);
\draw[->] (3) -- (4);
\end{tikzpicture}

\end{center}
\begin{center}
$\Csepgiven{1}{4}{2}$
\end{center}
\end{minipage}
\begin{minipage}[c]{\dimexpr0.4\textwidth}

\begin{center}
  $\critdag{C,\{3\}}$
\end{center}
\begin{center}
    \begin{tikzpicture}[inner sep=1pt, scale=0.9]
\node[draw, circle] (1) at (0,1) {\strut$1$};
\node[draw, circle] (2) at (-1,0) {\strut$2$};
\node[draw, circle, fill=orange!40!white] (3) at (1,0) {\strut$3$};
\node[draw, circle] (4) at (0,-1) {\strut$4$};
\draw[->] (1) -- (2);
\draw[->] (1) -- (3);
\draw[->] (2) -- (4);
\draw[->] (3) -- (4);
\draw[->] (1) -- (4);
\end{tikzpicture}
\end{center}
\begin{center}
$\notCsepgiven{1}{4}{3}$
\end{center}
\end{minipage}
\end{center}

On the other hand, if $C$ is such that $\omega_C(\pi_3) > \omega_C(\pi_2)$ then a similar argument yields that $\Csepgiven{1}{4}{3}$ and $\notCsepgiven{1}{4}{2}$. Thus we get two distinct maxoids which correspond to whether $\pi_2$ or $\pi_3$ is the critical path. Moreover, the maxoid $\globalCsep{\cg}{C}$ is completely determined by which side of the hyperplane
\[
c_{12} + c_{24} = \omega_C(\pi_2) = \omega_C(\pi_3) = c_{13} + c_{34}
\]
the matrix $C$ lies on.
\end{example}

Our goal in the remainder of this section is to develop the observations from the previous example into a general result which connects weighted DAGs and their maxoids using polyhedral geometry. We begin with a sequence of lemmas which further elucidate the connection between the critical paths in $(\cg, C)$ and the CI~structure $\globalCsep{G}{C}$.

\begin{lemma} \label{lem:markovcritpaths}
    Let $(\cg, C)$ and $(\cg', C')$ be two weighted DAGs on the same node set and generic weights $C$ and $C'$. Then
    \[
        \globalCsep{\cg}{C} = \globalCsep{\cg'}{C'} \iff \picrit{i}{j}(\cg, C) = \picrit{i}{j}(\cg', C') \,\text{ for all $i \neq j$},
    \]
    i.e., two weighted DAGs have the same critical paths if and only if their maxoids coincide.
\end{lemma}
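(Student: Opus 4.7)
For $(\Leftarrow)$, which is a mechanical check, observe that by \Cref{def:*-sep} the critical DAG $\critdag{C, L}$ depends on $(\cg, C)$ only through (i)~the ordered pairs $(i, j)$ admitting a directed path --- equivalently, admitting a critical path --- and (ii)~the unique critical path of each such pair. If both data agree between $(\cg, C)$ and $(\cg', C')$, then $\critdag{C, L} = \critdag{C', L}$ for every conditioning set $L$. The $\ast$-connecting paths of \Cref{fig:*-sep} therefore exist for the same triples $(i, j, L)$ in both weighted DAGs, so the maxoids coincide.

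For $(\Rightarrow)$, I argue by contrapositive and construct a CI statement lying in the symmetric difference of the two maxoids. Suppose the critical paths disagree at some pair $(i, j)$. The simpler subcase, when exactly one of the two DAGs admits a directed $i$-$j$ path, I would handle separately by choosing $L$ to isolate a type~(a) $\ast$-connection in the DAG that has the path, while simultaneously blocking every $\ast$-connection in the other. In the main subcase both DAGs admit directed $i$-$j$ paths, with $\pi \defas \picrit{i}{j}(\cg, C)$ and $\pi' \defas \picrit{i}{j}(\cg', C')$ distinct. Since two distinct directed paths in a DAG must visit distinct vertex sets (otherwise a cycle appears), their symmetric difference contains some $v \neq i, j$; without loss of generality $v \in \pi \setminus \pi'$.

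The proposed distinguishing CI statement is $\CI{i, j | L}$ with $L \defas V \setminus \pi'$. In $(\cg', C')$ the critical path $\pi'$ is disjoint from $L \setminus \{i, j\}$, so $i \to j$ is an edge of $\critdag{C', L}$ and supplies a type~(a) $\ast$-connection, giving $\notCsepgiven{i}{j}{L}$. In $(\cg, C)$, the vertex $v \in L$ blocks $\pi$, so $i \to j$ is not an edge of $\critdag{C, L}$. The principal obstacle is to exclude the remaining $\ast$-connecting types (b)--(e) in $\critdag{C, L}$: every non-collider $p, q$ of such a path must lie outside $L$ and hence on $\pi'$. My plan is to combine acyclicity of $\cg$ with the uniqueness of the critical path through $v$ --- refining the choice of $L$ if necessary --- to rule out each configuration by direct case analysis on \Cref{fig:*-sep}. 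Once this obstacle is handled, $\Csepgiven{i}{j}{L}$ holds in $\globalCsep{\cg}{C}$ but fails in $\globalCsep{\cg'}{C'}$, distinguishing the two maxoids.
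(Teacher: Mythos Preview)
Your $(\Leftarrow)$ direction is correct and matches the paper. For $(\Rightarrow)$, the proposal has a real gap: the ``principal obstacle'' you identify is indeed the crux, and you do not carry it out. With $L = V \setminus \pi'$, ruling out configurations (b)--(e) in $\critdag{C,L}$ is not a routine check, because you have essentially no control over the structure of $\cg$ beyond knowing its critical $i$--$j$ path~$\pi$; nothing you have set up prevents an internal vertex of $\pi'$ from being a common ancestor of $i$ and $j$ in $\cg$ with critical paths contained in $V(\pi')$, yielding a type-(b) connection. Your suggested remedy of ``refining $L$'' does not obviously help, since any non-collider must remain in $V \setminus L \subseteq V(\pi')$ regardless. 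There is also a smaller slip: the claim that distinct directed $i$--$j$ paths have distinct vertex sets is valid inside a single DAG, but $\pi$ and $\pi'$ live in possibly different DAGs (e.g.\ $\pi=(i,a,b,j)$ in $\cg$ and $\pi'=(i,b,a,j)$ in~$\cg'$).

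The paper's argument is shorter and structurally different, and the idea you are missing is a reduction step. The paper first passes to a pair $(i,j)$ for which $\pi$ avoids every internal vertex $\ell'_1,\dots,\ell'_{m-1}$ of $\pi'$ (replacing $j$ by a shared internal vertex and iterating if not). It then takes the \emph{singleton} conditioning set $L=\{\ell'_{m-1}\}$, the penultimate node of $\pi'$, and reverses the roles relative to your setup: the easy type-(a) connection is exhibited in $(\cg,C)$, since $\pi$ avoids $\ell'_{m-1}$ and hence $i\to j$ lies in $\critdag{C,L}$; separation is asserted on the $(\cg',C')$ side. Your large conditioning set places the separation burden on $(\cg,C)$, precisely where you have the least structural information.
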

\begin{proof}

    \enquote{$\impliedby$}: If $(\cg,C)$ and $(\cg', C')$ have the same critical paths then they give rise to the same critical DAG for any $L$, implying equal maxoids.

    \enquote{$\implies$} by contraposition: Suppose that $\pi = \picrit{i}{j}(\cg, C) \neq \picrit{i}{j}(\cg', C') = \pi'$ and denote the nodes on $\pi'$ as follows:
    \begin{align} \label{eqn:pipath}
        \pi':i = \ell'_0 \rightarrow \ell'_1\dots \rightarrow \ell'_{m-1} \rightarrow \ell'_m = j.
    \end{align}
    We may assume that $\pi$ does not contain any of the nodes $\ell_1', \dots, \ell_{m-1}'$ (if this is not the case, then we may replace $j$ with an internal node common to both $\pi$ and $\pi'$ and have shorter but still differing critical paths). Then clearly $\Csepgiven{i}{j}{\ell'_{m-1}}$ holds in $(\cg',C')$ but not in $(\cg, C)$, implying inequality of the respective maxoids.
\end{proof}

\begin{lemma} \label{lem:closurelemma}
Let $(\cg,C)$ be a weighted DAG and $\overline{\cg}$ the transitive closure of~$\cg$. There exists a matrix $\overline{C}$  supported on $\overline{\cg}$ such that
    $\globalCsep{\overline{\cg}}{\overline{C}} = \globalCsep{\cg}{C}$.
\end{lemma}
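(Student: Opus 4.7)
My plan is to construct $\overline{C}$ on $\overline{\cg}$ explicitly by keeping the original weights on the edges of $\cg$ and attaching a sufficiently large negative penalty to every new edge introduced by transitive closure. Concretely, let $W := \max \{\lvert c_{ij} \rvert : i \to j \in E(\cg)\}$, fix any $M > (n-1)W + 1$, and define
\[
  \overline{c}_{ij} \defas
  \begin{cases}
    c_{ij} & \text{if } i \to j \in E(\cg), \\
    -M    & \text{if } i \to j \in E(\overline{\cg}) \setminus E(\cg).
  \end{cases}
\]
The strategy is then to show that $\critdag{\overline{C}, L} = \critdag{C, L}$ for every $L \subseteq V$, since once the critical DAGs coincide for all conditioning sets, the $C^\ast$-separation relations necessarily coincide, and hence so do the maxoids.

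The verification splits into three short steps. First, condition (1) in \Cref{def:*-sep} is unaffected: by definition of transitive closure, $i$ and $j$ are connected by a directed path in $\cg$ if and only if they are connected by a directed path (indeed by a single edge) in $\overline{\cg}$. Second, I claim that for my choice of $M$, no critical path in $(\overline{\cg}, \overline{C})$ uses any edge of $E(\overline{\cg}) \setminus E(\cg)$. Indeed any path $\pi$ in $\overline{\cg}$ that uses at least one new edge satisfies $\omega_{\overline{C}}(\pi) \leq -M + (n-2)W < -W$, while for every pair $(i,j)$ with $P_{\cg}(i,j)$ non-empty, any single-edge path or longer $\cg$-path gives weight at least $-(n-1)W > -M + (n-2)W$. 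Hence the maximum weight of a path in $\overline{\cg}$ is attained only by paths lying inside $\cg$, on which $\omega_{\overline{C}}$ coincides with $\omega_C$. Third, this shows that the set of critical paths from $i$ to $j$ is the same in $(\cg,C)$ and in $(\overline{\cg}, \overline{C})$, and in particular their internal vertices agree, so condition (2) in \Cref{def:*-sep} reads identically in both settings. Combining these, $\critdag{\overline{C}, L} = \critdag{C, L}$ for every $L$, so the $\ast$-connecting paths of \Cref{fig:*-sep} are the same, and $\globalCsep{\overline{\cg}}{\overline{C}} = \globalCsep{\cg}{C}$.

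I do not expect any serious obstacle here; the only point that requires a moment of care is the quantitative estimate on $M$ ensuring that new-edge paths can never tie or exceed a genuine $\cg$-path. This is straightforward because $\cg$ is finite, so the supremum over all path weights is finite and can be dominated by a single scalar. A minor bookkeeping subtlety is that the lemma does not require $\overline{C}$ to be generic: if $C$ happens to have ties among $\cg$-paths, then $\overline{C}$ inherits exactly the same ties, which is harmless because $\critdag{\overline{C}, L}$ is defined in terms of the entire set of critical paths, not a singled-out representative. Consequently, the argument works uniformly for generic and non-generic coefficient matrices alike, which is exactly what is needed in order to apply \Cref{lem:closurelemma} as a reduction step later in the paper.
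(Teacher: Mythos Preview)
Your construction is exactly the one the paper uses: keep the old weights and assign a single sufficiently negative constant to every edge added by transitive closure, so that no such edge can lie on a critical path, whence the critical path structure---and hence, as you argue directly via $\critdag{C,L}$, the maxoid---is unchanged. Your explicit verification that $\critdag{\overline{C},L}=\critdag{C,L}$ for every $L$ is in fact slightly cleaner than the paper's appeal to \Cref{lem:markovcritpaths}, since that lemma is stated only for generic weights while your argument (and the statement of the lemma) covers the non-generic case as well.

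There is, however, a small arithmetic slip in your quantitative estimate. You compare the upper bound $-M+(n-2)W$ on any new-edge path weight against the lower bound $-(n-1)W$ on any $\cg$-path weight, asserting $-(n-1)W>-M+(n-2)W$; but this inequality requires $M>(2n-3)W$, which does not follow from your hypothesis $M>(n-1)W+1$. Two easy fixes: either simply take $M>(2n-3)W$, or, more elegantly, drop the crude global bound and argue by replacement---if a path $\pi$ in $\overline{\cg}$ contains a new edge $k\to\ell$, substitute for it a critical $\cg$-path from $k$ to $\ell$ (still a simple path, since $\cg$ is acyclic) to obtain a strictly heavier path whenever $-M<\min_{k,\ell}\varepsilon_{k\ell}$, which does follow from $M>(n-1)W$. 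This is essentially the paper's choice $\delta<\min_{i,j}\varepsilon_{ij}$.
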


\begin{proof}
For a fixed $(\cg, C)$ with edge set $E$ , let $\overline{E}$ be the edge set of its transitive closure $\overline{\cg}$. For any two path-connected nodes $i,j \in V$, let $\varepsilon_{ij}$ be the weight of the (not necessarily unique) critical $i-j$ path in $(\cg,C)$, and fix a $ -\infty < \delta < \min_{i,j} \varepsilon_{ij}$. One possible choice of $\overline{C}$ is given by
\begin{align*}
    \overline{C} = (\overline{c}_{ij})_{i,j \in V} = \begin{cases}
        c_{ij} \ \ \text{if}& (i,j) \in E, \\
        \delta \ \ \ \ \text{if}& (i,j) \in \overline{E}\setminus E, \\
        -\infty \ \ &\text{otherwise.}
    \end{cases}
\end{align*}
By construction, no edge in $\overline{E} \setminus E$ is contained in any critical path of $(\overline{\cg}, \overline{C})$. Thus, the statement follows from \Cref{lem:markovcritpaths}.
\end{proof}

\pagebreak

A related notion is the \emph{weighted transitive reduction}.

\begin{definition}
The \emph{weighted transitive reduction} $\cgtr$ of a weighted DAG $(\cg,C)$ is the subgraph of $\cg$ with edges determined as follows:
\begin{center}
    $i \to j \in E(\cgtr)$ $\iff$ $i \to j$ is the unique critical $i-j$ path in $(\cg, C)$.
\end{center}
\end{definition}
\begin{remark} \label{rmk:closuresuffices}
Another consequence of \Cref{lem:markovcritpaths} is that for any weighted DAG $(\cg,C)$ with generic $C$ we have
\begin{align}
    \globalCsep{\cg}{C} = \globalCsep{\cgtr}{C^{\tr}},
\end{align}
 where $C^{\tr}$ is any matrix supported on $\cgtr$ which gives rise to the same critical paths as~$(\cg,C)$. Combined with \Cref{lem:closurelemma}, this means that the maxoid of \emph{any} weighted DAG arises as a maxoid of its transitive closure for an appropriately chosen weight matrix.
\end{remark}

\begin{example} \label{eg:reductionclosurediamond}
The maxoid corresponding to the weighted DAG on the left in \Cref{fig:23-diamond} is
\begin{align*}
    \globalCsep{\cg}{C} = \{ \CI{1,3|2} \ , \ \CI{1,3|2,4} \ , \ \CI{1,4|2} \ , \ \CI{1,4|2,3} \}.
\end{align*}
\begin{minted}{julia-repl}
julia> using Maxoids, Oscar;
julia> E = [[1,2],[1,3],[2,3],[2,4],[3,4]];
julia> G = graph_from_edges(Directed, E);
julia> C = weights_to_tropical_matrix(G, [1,1,1,3,1]);
julia> M = maxoid(G,C)
4-element Vector{CIStmt}:
 [1 _||_ 3 | 2]
 [1 _||_ 3 | {2, 4}]
 [1 _||_ 4 | 2]
 [1 _||_ 4 | {2, 3}]
\end{minted}
This maxoid is also realized by the weighted transitive reduction $\cgtr$ and transitive closure $\overline{\cg}$ when $C^{\tr}$ and $\overline{C}$ are chosen according to \Cref{lem:closurelemma} and \Cref{rmk:closuresuffices}. In this example, $c^{\tr}_{24} > c^{\tr}_{23}+c^{\tr}_{34}$ and $\overline{c}_{14}< \min \{\overline{c}_{12} + \overline{c}_{24} \ , \ \overline{c}_{13} + \overline{c}_{34} \} $ must hold.
\begin{minted}{julia-repl}
julia> G_tr, C_tr = weighted_transitive_reduction(G,C);
julia> Gbar = transitive_closure(G);
julia> Cbar = weights_to_tropical_matrix(Gbar, [1,1,1,1,3,1]);
julia> M == maxoid(G_tr, C_tr) == maxoid(Gbar,Cbar)
true
\end{minted}
\begin{figure}
    \definecolor{bluevar}{rgb} {0.2,0.37,0.64}
    \definecolor{redvar}{rgb}{0.64, 0, 0}
    \centering
    \begin{subfigure}[t]{0.3\linewidth}
    \centering
    \begin{tikzpicture}[inner sep=1pt, scale=0.9]
    \tikzset{
    edge label style/.style={midway, font=\scriptsize\sffamily, text=violet}
}
        \node[draw, circle] (1) at (0,1) {\strut$1$};
        \node[draw, circle] (2) at (-1,0) {\strut$2$};
        \node[draw, circle] (3) at (1,0) {\strut$3$};
        \node[draw, circle] (4) at (0,-1) {\strut$4$};
        \draw[->] (1) -- (2) node[edge label style, above left] {$1$};
        \draw[->] (1) -- (3) node[edge label style, above right] {$1$};
        \draw[->] (2) -- (3) node[edge label style, above, yshift = 1pt] {$1$};
        \draw[->] (2) -- (4) node[edge label style, left,yshift = -2pt] {$3$};
        \draw[->] (3) -- (4) node[edge label style, right, yshift = -2pt] {$1$};
    \end{tikzpicture}
    \captionsetup{labelformat=empty}
    \caption{$(\cg,\textcolor{violet}{C})$}
    \end{subfigure}
    \begin{subfigure}[t]{0.3\linewidth}
    \centering
    \begin{tikzpicture}[inner sep=1pt, scale=0.9]
    \tikzset{
    edge label style/.style={midway, font=\scriptsize\sffamily, text=bluevar}
}
        \node[draw, circle] (1) at (0,1) {\strut$1$};
        \node[draw, circle] (2) at (-1,0) {\strut$2$};
        \node[draw, circle] (3) at (1,0) {\strut$3$};
        \node[draw, circle] (4) at (0,-1) {\strut$4$};
        \draw[->] (1) -- (2) node[edge label style, above left] {$1$};
        \draw[->] (2) -- (3) node[edge label style, above, yshift = 1pt] {$1$};
        \draw[->] (2) -- (4) node[edge label style, left, yshift = -2pt] {$3$};
        \draw[->] (3) -- (4) node[edge label style, right, yshift = -2pt] {$1$};
    \end{tikzpicture}
    \captionsetup{labelformat=empty}
    \caption{$(\cgtr,\textcolor{bluevar}{C^{\tr}})$}
    \end{subfigure}
    \begin{subfigure}[t]{0.3\linewidth}
    \centering
    \begin{tikzpicture}[inner sep=1pt, scale=0.9]
    \tikzset{
    edge label style/.style={midway, font=\scriptsize\sffamily, text=redvar}
}
        \node[draw, circle] (1) at (0,1) {\strut$1$};
        \node[draw, circle] (2) at (-1,0) {\strut$2$};
        \node[draw, circle] (3) at (1,0) {\strut$3$};
        \node[draw, circle] (4) at (0,-1) {\strut$4$};
        \draw[->] (1) -- (2) node[edge label style, above left] {$1$};
        \draw[->] (1) -- (3) node[edge label style, above right] {$1$};
        \draw[->] (1) -- (4) node[edge label style, below right, yshift = -2pt] {$1$};
        \draw[->] (2) -- (3) node[edge label style, above left, yshift = 2pt] {$1$};
        \draw[->] (2) -- (4) node[edge label style, left,yshift = -2pt] {$3$};
        \draw[->] (3) -- (4) node[edge label style, right,yshift = -2pt] {$1$};
    \end{tikzpicture}
    \captionsetup{labelformat=empty}
    \caption{$(\overline{\cg},\textcolor{redvar}{\overline{C}})$}
    \end{subfigure}
    \caption{For appropriate $C^{\tr}$ and $\overline{C}$, $\globalCsep{\cg}{C} = \globalCsep{\cgtr}{C^{\tr}} =  \globalCsep{\overline{\cg}}{\overline{C}}$ holds.}
    \label{fig:23-diamond}
\end{figure}

\end{example}

\begin{theorem}\label{prop:CIstructurecone}
Let $(\cg,C)$ be a weighted DAG with generic $C \in \mathbb{R}^E \setminus \mathcal{H}_\cg$. The set
\begin{align} \label{eqn:conedef}
    \cone_\cg(C) := \big\{ C' \in \mathbb{R}^E \setminus \mathcal{H}_\cg : \globalCsep{\cg}{C} = \globalCsep{\cg}{C'} \big\}
\end{align}
is a full-dimensional open polyhedral cone defined by linear inequalities of the form
    \begin{equation} \label{eqn:coneinequality}
        \omega_{C'}(\picrit{i}{j}(\cg, C)) > \omega_{C'}(\pi), \; \text{ for each $\pi \in P(i,j) \setminus \{\picrit{i}{j}(\cg, C)\}$},
    \end{equation}
    for all distinct $i,j \in V$.
\end{theorem}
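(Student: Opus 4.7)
The plan is to reduce the equality of maxoids to the equality of critical paths via \Cref{lem:markovcritpaths}, and then to encode this combinatorial condition as a system of strict linear inequalities on the entries of $C'$.

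First, I would apply \Cref{lem:markovcritpaths}: for any $C' \in \mathbb{R}^E \setminus \mathcal{H}_\cg$ one has $\globalCsep{\cg}{C} = \globalCsep{\cg}{C'}$ if and only if $\picrit{i}{j}(\cg,C) = \picrit{i}{j}(\cg,C')$ for every pair of distinct path-connected vertices $i, j \in V$. Fix such a pair and write $\pi^* := \picrit{i}{j}(\cg,C)$. Since $C'$ is generic, the set $P(i,j)$ has a unique path of maximum $\omega_{C'}$-weight, and that path equals $\pi^*$ precisely when
\[
  \omega_{C'}(\pi^*) > \omega_{C'}(\pi) \quad \text{for every } \pi \in P(i,j) \setminus \{\pi^*\}.
\]
Taking the conjunction over all such pairs $(i,j)$ produces exactly the system \eqref{eqn:coneinequality}.

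For the geometric assertions, I would note that each weight $\omega_{C'}(\pi) = \sum_{e \in \pi} c'_e$ is a linear functional of $C'$, so \eqref{eqn:coneinequality} is a finite system of \emph{homogeneous} strict linear inequalities. Its solution set $U \subseteq \mathbb{R}^E$ is an open convex polyhedral cone: scaling-invariant by homogeneity, open as a finite intersection of open linear half-spaces, and non-empty because it contains $C$ by construction of $\pi^*$. Hence $U$ is full-dimensional in $\mathbb{R}^E$. By the equivalence above, $\cone_\cg(C) = U \setminus \mathcal{H}_\cg$, which differs from $U$ by a finite union of lower-dimensional hyperplanes, so it remains full-dimensional and has the same closure as $U$. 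The open cone $U$ (equivalently, its closure) is the polyhedral object referred to in the statement.

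I expect the main work to be the critical-path reformulation supplied by \Cref{lem:markovcritpaths}; the remaining steps are essentially linear-inequality bookkeeping. The only mild subtlety is the mismatch between the definition \eqref{eqn:conedef}, which removes the \emph{entire} arrangement $\mathcal{H}_\cg$, and the open cone $U$ cut out by \eqref{eqn:coneinequality} alone: the extra excluded hyperplanes (those equating the weights of two non-critical paths for some pair $(i,j)$) can cut through the interior of $U$, but they form a codimension-one set that does not affect full-dimensionality, the closure, or the polyhedral structure being described.
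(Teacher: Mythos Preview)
Your proposal is correct and follows essentially the same route as the paper: invoke \Cref{lem:markovcritpaths} to reduce maxoid equality to equality of critical paths, then encode this by the strict linear inequalities~\eqref{eqn:coneinequality}.

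One small simplification: the ``mild subtlety'' you flag in your last paragraph does not actually arise. Any $C'$ satisfying all of the strict inequalities~\eqref{eqn:coneinequality} has, for every pair $(i,j)$, the path $\pi^*=\picrit{i}{j}(\cg,C)$ as its \emph{unique} maximum-weight $i$--$j$ path; hence $C'$ is generic. Thus the open cone $U$ cut out by~\eqref{eqn:coneinequality} already lies entirely inside $\mathbb{R}^E\setminus\mathcal{H}_\cg$, and $\cone_\cg(C)=U$ on the nose rather than only up to a lower-dimensional set. The paper records this as ``disjoint from $\mathcal{H}_\cg$''. Your argument for full-dimensionality (non-empty open set in $\mathbb{R}^E$) is a clean alternative to the paper's $\varepsilon$-perturbation phrasing.
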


\begin{proof}
    By \Cref{lem:markovcritpaths}, the set $\cone_\cg(C)$ consists of all generic weight matrices $C'$ supported on $\cg$ and giving rise to the same critical paths as~$C$. This is precisely what is encoded in the inequalities \eqref{eqn:coneinequality} for all $i,j \in V$.
    These strict linear inequalities in the entries of $C'$ define an open polyhedral cone in $\mathbb{R}^E$ disjoint from $\mathcal{H}_\cg$. The cone is non-empty as $C \in \cone_\cg(C)$ is given, and full-dimensional because $\varepsilon$-perturbations of $C$ in the direction of any $c_{ij}$ preserve its critical~paths.
\end{proof}

\begin{remark} \label{rmk:simplecyclessuffice}
A minimal description of the cone defined in \eqref{eqn:coneinequality} can be obtained by considering only pairs $i,j$ which are connected by multiple \emph{disjoint} paths, in the sense that any two of them form a simple cycle in the skeleton of~$\cg$. Indeed, if two $i-j$ paths $\pi_1$ and $\pi_2$ contain a common intermediate node $k$, then the linear inequality corresponding to the comparison of $\omega_C(\pi_1)$ and $\omega_C(\pi_2)$ is already implied by the linear inequalities which arise from comparing their respective $i-k$ and $k-j$ portions.
\end{remark}

We now study the case where the weight matrix lies on the boundary of a cone. For~generic $C$, let~$\Tilde{C}$ be a matrix lying on a \textit{facet} of the euclidean closure of $\cone_\cg(C)$. This~means that for some pair $i,j \in V$, equality holds in \eqref{eqn:coneinequality} and thus there are two critical $i-j$ paths in $(\cg,\Tilde{C})$: one is the unique critical $i-j$ path $\picrit{i}{j}(\cg,C)$, and the other we denote by~$\pi'$. We assume that the paths are disjoint in the sense of \Cref{rmk:simplecyclessuffice} and that all matrices on the facet of $\cone_\cg(C)$ on which $\Tilde{C}$ lies give rise to the same critical paths as $C$ outside of those which factor through the directed $i-j$ portion of the~DAG.

\begin{theorem} \label{prop:faceunion}
 In the setting described above, the following holds:
    \begin{align} \label{eqn:globalCsepboundary}
        \globalCsep{\cg}{\Tilde{C}} = \globalCsep{\cg}{C} \cup \globalCsep{\cg}{C'},
    \end{align}
    where $C'$ is a matrix supported on $\cg$ giving rise to the same critical paths as $C$ except for in the directed $i-j$ portion, where the unique critical path is $\pi'$.
\end{theorem}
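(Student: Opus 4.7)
My plan is to reduce the theorem to a per-$L$ comparison of the critical DAGs of the three matrices. Concretely, I will establish that $\critdag{\tilde C, L} = \critdag{C, L} \cap \critdag{C', L}$ for every $L \subseteq V$, and that the two factors on the right are always comparable by inclusion, so that $\critdag{\tilde C, L}$ coincides with at least one of them. Given this, a separation $\CI{A,B|L}$ holds at $\tilde C$ iff no $\ast$-connecting path exists in $\critdag{\tilde C, L}$; since this DAG equals the smaller of $\critdag{C, L}$ and $\critdag{C', L}$, the separation must also hold at the corresponding matrix and therefore lies in $\globalCsep{\cg}{C} \cup \globalCsep{\cg}{C'}$, while conversely any separation at $C$ or at $C'$ automatically lifts to $\tilde C$ because $\critdag{\tilde C, L}$ sits inside both.

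To obtain the intersection formula I would use the facet condition $\omega_{\tilde C}(\pi) = \omega_{\tilde C}(\pi')$ together with the theorem's hypothesis to describe the critical paths at $\tilde C$ explicitly. For a pair $(a,b)$ whose critical $C$-path does not factor through the directed $i-j$ portion of $\cg$, the same unique path is critical at $\tilde C$ and at $C'$; for a factoring pair, the critical $C$-path decomposes as $\alpha \cdot \pi \cdot \beta$ and the critical $C'$-path as $\alpha \cdot \pi' \cdot \beta$ with common $a$-to-$i$ prefix $\alpha$ and $j$-to-$b$ suffix $\beta$, and at $\tilde C$ both of these paths are critical and nothing else is (since all other weight comparisons at $\tilde C$ remain strict, inherited from the interior $C$). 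Plugging this into the definition of the critical DAG shows that $(a,b) \in \critdag{\tilde C, L}$ precisely when neither $\alpha \cdot \pi \cdot \beta$ nor $\alpha \cdot \pi' \cdot \beta$ meets $L$, which is exactly the intersection condition.

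For comparability I would inspect $\critdag{C, L}$ and $\critdag{C', L}$ edge by edge. Non-factoring pairs produce the same edges in both. For a factoring pair whose outer parts already meet $L$, both candidate paths hit $L$ and the edge is absent from both DAGs. In all other cases, membership of the edge in $\critdag{C, L}$ or in $\critdag{C', L}$ is governed by the single uniform condition of whether $V(\pi) \cap L$ or $V(\pi') \cap L$ is empty, independently of $(a,b)$. Hence all discrepancies between the two critical DAGs point the same way, and one is contained in the other. The main obstacle of the proof lies in this comparability step, which crucially uses the hypothesis that the critical path structure outside the $i-j$ portion is preserved along the facet; without this hypothesis, different factoring pairs could disagree in opposite directions and $\critdag{\tilde C, L}$ would fail to equal either $\critdag{C, L}$ or $\critdag{C', L}$.
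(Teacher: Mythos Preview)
Your argument is correct and takes a genuinely different route from the paper's.  The paper first restricts to the toy case where $\cg$ consists of just the two paths $\pi,\pi'$, proves both inclusions of \eqref{eqn:globalCsepboundary} by inspecting which conditioning sets $L$ hit $\pi\cup\pi'$, and then waves the general case through by noting that any additional $\ast$-connecting structure outside the $i$--$j$ portion is common to $(\cg,C)$, $(\cg,C')$ and $(\cg,\tilde C)$.  By contrast, you work directly in the general graph and prove the structural identity $\critdag{\tilde C,L}=\critdag{C,L}\cap\critdag{C',L}$ together with the comparability $\critdag{C,L}\subseteq\critdag{C',L}$ or vice versa; the comparability step is the new idea, and your observation that the discrepancy between the two critical DAGs is governed by the single dichotomy ``does $L$ meet the interior of $\pi$ / of $\pi'$'' (independent of the outer parts $\alpha,\beta$) is exactly what makes it go through.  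What this buys you is a sharper statement than the theorem: for each fixed $L$, the $C^\ast$-separations given $L$ at $\tilde C$ agree \emph{exactly} with those at one of $C$ or $C'$, not merely with their union.  The paper's approach, on the other hand, is closer to the definitions and requires no lemma about comparability, so it is slightly more self-contained even if the general-case extension is less carefully spelled out than your version.
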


\begin{proof}
    We first consider the simplified case where $\cg$ consists solely of the two directed $i-j$ paths. For readability, we set $\pi:= \picrit{i}{j}(\cg, C)$ and refer to the intermediate nodes of $\pi$~and~$\pi'$ using the notation in \eqref{eqn:pipath}.
    In this setting, $i$~and~$j$ are the only two nodes which are connected by more than one path. Because of this, it suffices to prove both inclusions in \eqref{eqn:globalCsepboundary} only for separation statements of the form $\Csepgiven{i}{j}{L}$.

    \enquote{$\subseteq$}: Let $L \subset V \setminus ij$. Note that if $\CI{i,j|L} \in \globalCsep{\cg}{\Tilde{C}}$ holds, then $L$ intersects $\pi \cup \pi'$ non-trivially. Indeed, if $L \cap (\pi \cup \pi') = \emptyset$, then the critical DAG $\critdag{\Tilde{C},L}$ contains the edge $i \to j$, implying $\ast$-connectedness.     Thus, this choice of $L$ also separates $i$ and $j$ in $(\cg,C)$ or $(\cg, C')$, implying the first inclusion.

    \enquote{$\supseteq$}: In $(\cg, C)$ any $L \subseteq V \setminus ij$ which intersects $\pi$ non-trivially gives rise to the statement $\Csepgiven{i}{j}{L}$. This choice of $L$ also separates $i$ and $j$ in $(\cg, \Tilde{C})$. (Recall that the condition for the edge $i \to j$ to be present in $\critdag{\Tilde{C},L}$ is that \emph{no} critical $i-j$ path in $(\cg, \Tilde{C})$ factors through $L$.) Analogously, any statement of the form $\Csepgiven{i}{j}{L}$ in $\globalCsep{\cg}{C'}$ also holds in $(\cg, \Tilde{C})$. %

    In the more general setting where $\cg$ does not consist solely of directed $i-j$ paths, additional $\ast$-connecting $i-j$ paths may exist. Thus, additional nodes which are not contained in $\pi$ and $\pi'$ may be needed to separate $i$ and $j$. However, these nodes will be required to separate $i$ and $j$ in all three weighted DAGs, since, by our starting assumption, these three matrices give rise to the same critical paths outside of the directed $i-j$ portion of $\cg$. Furthermore, if $i'$ and $j'$ are nodes such that a path between them factors through $\pi$ (and thus also $\pi'$), then a similar argument immediately shows that any $L$ which separates them in $(\cg, \Tilde{C})$ must also separate them in either $(\cg, C)$ or $(\cg, C')$.
    Lastly, any separation which does not involve $\pi$ and $\pi'$ will be present in all three maxoids by assumption and thus the remaining CI statements will be the same as well.
\end{proof}

\begin{remark} \label{rmk:Cs}
In the setting of \Cref{prop:faceunion}, given $\Tilde{C}$ one can obtain a matrix with the properties of $C'$ by replacing $\Tilde{c}_{\ell, \ell'}$ with $\Tilde{c}_{\ell,\ell'} + \varepsilon$, where $(\ell, \ell') \in \pi'$ and $\varepsilon >0$ fulfills
\begin{align} \label{eqn:epsdef}
    \varepsilon < \min_{i',j'\in V} \Biggl\{\min_{\pi_1, \pi_2 \in P(i', j')} |\omega_{\Tilde{C}}(\pi_1) -\omega_{\Tilde{C}}(\pi_2)| \Biggr\}.
\end{align}
This makes $\pi'$ the unique critical $i-j$ path while preserving all other critical paths.
\end{remark}

\Cref{prop:faceunion} implies that facets (and by extension, lower-dimensional faces) of the euclidean closure of $\cone_\cg(C)$ correspond to non-generic maxoids which arise as unions of generic maxoids.

\begin{corollary} \label{cor:fan}
The euclidean closures of the open cones corresponding to the generic maxoids of $\cg$ form a complete polyhedral fan, $\cf_\cg$, in $\mathbb{R}^E$. The maximal cones of $\cf_\cg$ are in bijection with the generic maxoids of $\cg$.
Moreover, the function $\Phi$ which sends a cone of $\cf_\cg$ to its maxoid is an inclusion-reversing surjection:
\begin{align*}
    F_1 \text{ is a face of } F_2 \implies \Phi(F_1) \supseteq \Phi(F_2) \ \ \  \text{for all $F_1, F_2 \in \cf_\cg$. }
\end{align*}
\end{corollary}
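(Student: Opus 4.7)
The plan is to identify $\cf_\cg$ with the common refinement of a family of natural ``max-fans'' on $\mathbb{R}^E$ and then derive the claimed properties from \Cref{prop:CIstructurecone}, \Cref{prop:faceunion}, and \Cref{lem:markovcritpaths}.

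\textbf{Step 1 (Fan structure via common refinement).} For each ordered pair $(i,j)$ of nodes with $|P(i,j)| \geq 2$, consider the collection whose maximal cones are
\[
  K_\pi^{ij} \defas \bigl\{\, C \in \mathbb{R}^E : \omega_C(\pi) \geq \omega_C(\pi') \text{ for all } \pi' \in P(i,j) \,\bigr\},
\]
one per $\pi \in P(i,j)$. Each $K_\pi^{ij}$ is a closed polyhedral cone cut out by linear inequalities $\omega_C(\pi)-\omega_C(\pi') \geq 0$, and the collection together with all faces is a complete polyhedral fan $\mathcal{F}_{ij}$ in $\mathbb{R}^E$. The common refinement $\cf_\cg \defas \bigwedge_{(i,j)} \mathcal{F}_{ij}$ is therefore also a complete polyhedral fan.

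\textbf{Step 2 (Bijection between maximal cones and generic maxoids).} The maximal cones of $\cf_\cg$ are the full-dimensional intersections $\bigcap_{(i,j)} K_{\pi^*_{ij}}^{ij}$ indexed by tuples $(\pi^*_{ij})$ of critical paths. Any generic $C \in \mathbb{R}^E \setminus \mathcal{H}_\cg$ lies in a unique such intersection, and the strict version of its defining inequalities is precisely \eqref{eqn:coneinequality} from \Cref{prop:CIstructurecone}; hence these maximal cones are exactly the euclidean closures $\overline{\cone_\cg(C)}$. By \Cref{lem:markovcritpaths}, distinct critical-path tuples correspond to distinct generic maxoids, which gives the desired bijection.

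\textbf{Step 3 (Inclusion-reversing surjection $\Phi$).} Define $\Phi$ by sending each $F \in \cf_\cg$ to the maxoid of any matrix in its relative interior. Well-definedness and surjectivity are built into the construction. For the inclusion-reversing property, a facet-inclusion $F_1 \subsetneq F_2$ corresponds to activating a single additional path-tie, and \Cref{prop:faceunion} shows the resulting maxoid is the union of the two adjacent maximal-cone maxoids, in particular containing $\Phi(F_2)$. Iterating through a chain of facet-inclusions yields $\Phi(F_1) \supseteq \Phi(F_2)$ whenever $F_1$ is a face of $F_2$.

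\textbf{Main obstacle.} The principal subtlety lies in extending \Cref{prop:faceunion} from its stated setting (a facet separating two full-dimensional cones across a pair of disjoint critical $i$--$j$ paths) to arbitrary face-incidences, which may simultaneously activate several ties or ties on non-disjoint path pairs. The plan to handle this is to decompose an arbitrary face-inclusion into a chain of facet-inclusions and apply \Cref{prop:faceunion} iteratively, using \Cref{rmk:simplecyclessuffice} to reduce the relevant ties to disjoint path pairs so that the hypotheses of \Cref{prop:faceunion} apply at each intermediate step.
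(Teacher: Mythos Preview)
Your proposal is essentially correct and matches the paper's approach. The paper states this result as an immediate corollary of \Cref{prop:CIstructurecone} and \Cref{prop:faceunion} with no separate proof; your common-refinement description of~$\cf_\cg$ is exactly the normal-fan construction that the paper spells out later in \Cref{thm:maxoidpolytope} (each of your $\mathcal{F}_{ij}$ is the normal fan of $\Newt(f_{ij})$, and their common refinement is the normal fan of the Minkowski sum). The bijection in Step~2 via \Cref{lem:markovcritpaths} is precisely how the paper reasons as well.

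One comment on your Step~3: the iteration you propose is workable, but \Cref{prop:faceunion} is stated only for facets of \emph{maximal} cones, so in a chain $F_1 \subset \dots \subset F_k$ you can apply it verbatim only at the top. The paper handles this exactly as informally as you do (``and by extension, lower-dimensional faces''). A cleaner route that avoids the induction altogether: for $C_1 \in \operatorname{relint}(F_1)$ and $C_2 \in \operatorname{relint}(F_2)$ with $F_1$ a face of $F_2$, the set of critical $i$--$j$ paths for $C_1$ contains that for $C_2$ for every pair $(i,j)$. Consequently $E(\critdag{C_1,L}) \subseteq E(\critdag{C_2,L})$ for every $L$ (more critical paths means the ``no critical path meets $L$'' condition is harder to satisfy), hence fewer $\ast$-connecting paths and more separations. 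This gives $\Phi(F_1) \supseteq \Phi(F_2)$ directly and also shows that $\Phi$ is constant on relative interiors, which you asserted without justification.
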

We refer to $\mathcal{F}_\cg$ as the \emph{maxoid fan} of $\cg$.

\begin{example} \label{eg:diamondfan}
The fan associated to the diamond graph from \Cref{eg:diamond} consists of two maximal cones in $\mathbb{R}^4$ separated by the hyperplane $c_{12}+c_{24} = c_{13}+c_{34}$.
\begin{minted}{julia-repl}
julia> using Maxoids, Oscar;
julia> G = Maxoids.diamond();
julia> F = maxoid_fan(G)
Polyhedral fan in ambient dimension 4
\end{minted}
The corresponding maxoids are
\begin{align*}
    \mathcal{M}_1 &= \{ \CI{2,3|1} \ , \CI{1,4|2,3} \ , \ \CI{1,4|2} \}  &&\text{ for } c_{12}+c_{24} > c_{13}+c_{34}  \\
    \mathcal{M}_2 &= \{ \CI{2,3|1} \ , \CI{1,4|2,3} \ , \ \CI{1,4|3} \}   &&\text{ for } c_{12}+c_{24} < c_{13}+c_{34} \\
    \mathcal{M}_3 &= \{ \CI{2,3|1} \ , \CI{1,4|2,3} \ , \ \CI{1,4|2} \ , \ \CI{1,4|3} \}  &&\text{ for } c_{12}+c_{24} = c_{13}+c_{34}.
\end{align*}
\begin{figure}[h!]
\centering
\begin{subfigure}{0.6\linewidth}
\centering
\begin{tikzpicture}[scale = 0.8]
\definecolor{bluevar}{rgb} {0.2,0.37,0.64}
\definecolor{redvar}{rgb}{0.64, 0, 0}
\definecolor{ltred}{rgb}{0.64, 0, 0}
\tikzset{
    axis/.style = {very thick, ->, >=stealth},
    line/.style = {very thick, color=violet},
    dashed_line/.style = {dashed, very thick, color=redvar},
    red_line/.style = {very thick, color=redvar},
    point/.style = {circle, draw, fill=redvar, inner sep=0pt, minimum size=8pt},
    label/.style = {color=redvar, anchor=south west, font=\bfseries, yshift= - 2pt, xshift = 3pt},
    blue_label/.style = {color=violet, anchor=south west, font=\bfseries},
    blue_dash/.style = {dashed, very thick, color=violet},
    red_dash/.style = {dashed, thick, color=redvar}
}
\tikzstyle{every node}=[font=\large]

\fill[redvar!5] (4,2) -- (-4,2) -- (-4,-2) -- cycle;

\fill[bluevar!5] (4,2) -- (4,-2) -- (-4,-2) -- cycle;

\draw[axis, lightgray] (0, -2) -- (0, 2);
\draw[axis,lightgray] (-4, -0) -- (4,0);

\draw[line] (3.2, 1.6) -- (-3.2, -1.6);
\draw[blue_dash] (3.2,1.6) -- (4, 2);
\draw[blue_dash] (-3.2,-1.6) -- (-4, -2);

\node[text=redvar, font=\bfseries\Large] at (-3, 1.2) {$\mathcal{M}_1$};
\node[text=bluevar, font=\bfseries\Large] at (3, -1.2) {$\mathcal{M}_2$};
\node[text = violet, font =\bfseries\Large] at (3.,1) {$\mathcal{M}_3$};
\end{tikzpicture}
\end{subfigure}
\begin{subfigure}[t]{0.35\linewidth}
\centering
\vspace{-3.2cm}
\definecolor{bluevar}{rgb} {0.2,0.37,0.64}
\definecolor{redvar}{rgb}{0.64, 0, 0}
\definecolor{ltred}{rgb}{0.64, 0, 0}
\begin{tikzpicture}[inner sep=1pt, scale = 1.2]
\node[draw, circle] (1) at (0,1) {\strut$1$};
\node[draw, circle] (2) at (-1,0) {\strut$2$};
\node[draw, circle] (3) at (1,0) {\strut$3$};
\node[draw, circle] (4) at (0,-1) {\strut$4$};
\draw[->, very thick, redvar] (1) -- (2);
\draw[->, very thick, bluevar] (1) -- (3);
\draw[->, very thick, redvar] (2) -- (4);
\draw[->, very thick, bluevar] (3) -- (4);
\end{tikzpicture}
\end{subfigure}
\caption{The maxoid fan $\cf_\cg$ of the diamond, viewed in a 2-dimensional projection of $\mathbb{R}^4$. The full-dimensional cones in the fan correspond to the unique choices of $\picrit{1}{4}$ viewed on the right. The non-generic maxoid $\mathcal{M}_3$ is realized when the weights of the two paths coincide.}
\label{fig:diamondfan}
\end{figure}
\end{example}
The next example shows that non-generic maxoids can coincide with generic maxoids, implying that the function $\Phi$ from \Cref{cor:fan} is generally not a bijection.

\begin{example} \label{eg:3nodeDAG}
Consider the complete topologically ordered DAG on 3 nodes $\cg$ depicted in \Cref{fig:3nodecompleteDAG}. There are two maxoids associated with this DAG:
\begin{figure}[h!]
\centering
\begin{tikzpicture}
    \begin{scope}[->, every node/.style={circle,draw},line width=1pt, node distance=1.8cm]
    \node (1) {$1$};
    \node (2) [right of=1] {$2$};
    \foreach \from/\to in {1/2}
    \draw (\from) -- (\to);
    \path[every node/.style={font=\sffamily\small}]
    (1) -- (2) node [midway, above] {$c_{12}$};
    \node (3) [ right of=2] {$3$};
    \foreach \from/\to in {2/3}
    \draw (\from) -- (\to);
    \path[every node/.style={font=\small}]
    (2) -- (3) node [midway, above] {$c_{23}$};
    \foreach \from/\to in {1/3}
    \draw (\from) to[bend right=40] (\to);
    \path[every node/.style={font=\small}]
    (1) -- (3) node [midway, yshift = -30pt] {$c_{13}$};

\end{scope}
\end{tikzpicture}

\caption[The complete 3-node DAG]{A complete weighted DAG on 3 nodes.}
\label{fig:3nodecompleteDAG}
\end{figure}
\begin{align*}
     \begin{cases}
        \ \bigl\{ \CI{1,3|2}  \bigr\} \ \ \ &\text{if} \ \ c_{13} \leq c_{12}+c_{23} \\
    \ \  \ \ \ \ \emptyset \ \ \ &\text{if} \ \ c_{13} > c_{12}+c_{23}.
    \end{cases}
\end{align*}
By \Cref{cor:fan}, the maxoid fan $\cf_\cg \subset \mathbb{R}^3$ has 2 maximal cones, the closures of which share the common face $\{c_{13} = c_{12} + c_{23}\}$. However, unlike \Cref{eg:diamondfan}, the non-generic maxoid corresponding to this face is $\{ \CI{1,3|2} \}$ , which coincides with the generic maxoid $\globalCsep{\cg}{C}$ for any $C$ fulfilling $c_{13} < c_{12}+c_{23}$.
\end{example}

\section{The Maxoid Polytope}\label{sec:maxoid-polytope}
In this section, we provide a description of the maxoid fan from \Cref{cor:fan} in terms of a polytope associated to $\cg$, which we call the \emph{maxoid polytope}. To define this polytope, we introduce polynomials with variables indexed by the edges of $\cg$, which lie in the polynomial ring  $\mathbb{R}[E]:= \mathbb{R}[x_{uv} : u\rightarrow v \in E]$.

Let $\cg =  (V,E)$ be a fixed DAG. For $i,j \in V$ such that $|P(i,j)| \geq 1$, consider the \emph{path polynomial}
\begin{equation*}
    f_{ij} = \sum_{\pi \in P(i,j)} \prod_{u \rightarrow v \in \pi}x_{uv} \in \mathbb{R}[E] .
\end{equation*}

The weight of the critical $i-j$ path in $(\cg, C)$ can be computed by evaluating the max-plus \emph{tropicalization} (cf. \Cref{app:trop}) $F_{ij} := Trop(f_{ij})$ of $f_{ij}$ at $C$:
\begin{align*}
    F_{ij}(C) &= \max_{\pi \in P(i,j)} \left\{\sum_{u \to v \in \pi} c_{uv}\right\} =  \max_{\pi \in P(i,j)} \omega_C(\pi) ,
\end{align*}
which is also the $ij$-th entry of the Kleene star of $C$ (see \cref{eqn:kleenestarentry}). \\

The polynomial $F_{ij}$ defines a tropical hypersurface $\mathcal{T}(F_{ij})$, which in turn generates a polyhedral decomposition of $\mathbb{R}^E$. Each term of $F_{ij}$ corresponds uniquely to both a path $\pi \in P(i,j)$, and a full-dimensional cell of $\mathcal{T}(F_{ij})$. For generic $C$ it holds that
\begin{center}
    $\picrit{i}{j}(\cg,C) = \pi \iff $ $C$ lies in the interior of the region of $\mathcal{T}(F_{ij})$ corresponding to $\pi$.
\end{center}
As $F_{ij}$ has constant coefficients, \Cref{thm:jos} implies that the subdivision induced by $\mathcal{T}(F_{ij})$ has a particularly nice form: it is the normal fan of the Newton polytope of $f_{ij}$ (see \Cref{app:polytopes}). The \emph{common refinement} of all such normal fans $\mathcal{N}(\Newt(f_{ij}))$ as one varies over the pairs $i,j$ fully encodes the combinatorics of the critical paths of $\cg$. As the common refinement of the normal fans is the normal fan of the Minkowski sum of the individual polytopes (cf. \Cref{thm:normalminkowskisum}), these observations can be summed up as follows:

\begin{theorem} \label{thm:maxoidpolytope}
    The maxoid fan \(\mathcal{F}_\cg\) of $\cg$ coincides with the normal fan of the polytope~$P_\cg$, where
    \[ P_\cg := \sum_{\substack{i,j \in V, \\  |P(i,j)| \geq 1}}\Newt(f_{ij}).\]
    In particular, this means that \(P_\cg\) is the normal fan of the Newton polytope \(\Newt\left(\prod f_{ij}\right)\) of the product
    of path polynomials \(f_{ij}\) with $|P(i,j)| \geq 1$.
\end{theorem}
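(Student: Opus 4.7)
The plan is to combine \Cref{lem:markovcritpaths} with the standard dictionary between tropical hypersurfaces and Newton polytopes. By \Cref{lem:markovcritpaths} the maxoid of a generic $C$ is determined by the tuple of critical paths $\bigl(\picrit{i}{j}(\cg, C)\bigr)_{|P(i,j)| \geq 1}$, and by \Cref{cor:fan} the maximal open cones of $\cf_\cg$ are exactly the loci where this tuple is constant. Since $\picrit{i}{j}(\cg, C)$ is by definition the unique monomial of $f_{ij}$ whose evaluation realizes the maximum in $F_{ij}(C)$, the maximal cones of $\cf_\cg$ coincide with the maximal cells of the common refinement of the tropical subdivisions induced by $\mathcal{T}(F_{ij})$ as $(i,j)$ ranges over pairs with $|P(i,j)| \geq 1$.

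First I would invoke \Cref{thm:jos}: because each $f_{ij}$ has constant (trivial) coefficients, the subdivision of $\mathbb{R}^E$ induced by $\mathcal{T}(F_{ij})$ is precisely the normal fan $\mathcal{N}(\Newt(f_{ij}))$. Under this identification, a cone of $\mathcal{N}(\Newt(f_{ij}))$ corresponds to the set of weight matrices $C$ for which a prescribed subset of monomials (paths) simultaneously attains the maximum in $F_{ij}$, so full-dimensional cones record single critical paths and lower-dimensional cones record ties among several paths of equal weight.

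Second, I would apply \Cref{thm:normalminkowskisum}: the common refinement of finitely many normal fans is the normal fan of the Minkowski sum of the corresponding polytopes. Combined with the previous step this yields
\begin{equation*}
   \cf_\cg \;=\; \mathcal{N}\!\left(\sum_{\substack{i,j \in V \\ |P(i,j)|\geq 1}} \Newt(f_{ij})\right) \;=\; \mathcal{N}(P_\cg),
\end{equation*}
which proves the first claim. The ``in particular'' statement then follows from the standard identity $\Newt(fg) = \Newt(f) + \Newt(g)$ applied inductively to the finite product $\prod_{i,j} f_{ij}$, giving $\Newt\bigl(\prod_{i,j} f_{ij}\bigr) = \sum_{i,j} \Newt(f_{ij}) = P_\cg$, so both polytopes have the same normal fan.

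The main obstacle I anticipate is checking that the cell structure on the non-generic stratum — the euclidean closures of generic cones described by \Cref{prop:faceunion} and \Cref{cor:fan} — agrees with the full face lattice of $\mathcal{N}(P_\cg)$, not just on its top-dimensional cones. This should follow because a codimension-$k$ face of $\mathcal{N}(P_\cg)$ corresponds to exactly $k$ independent equalities of the form $\omega_{C}(\pi) = \omega_{C}(\pi')$ between monomials of the $f_{ij}$, which is precisely the scenario in which multiple critical paths coexist. By \Cref{prop:faceunion} the maxoid of such a $C$ is the union of the maxoids of the adjacent generic cones, matching the inclusion-reversing surjection $\Phi$ from \Cref{cor:fan} and thus the incidence combinatorics of $\mathcal{N}(P_\cg)$.
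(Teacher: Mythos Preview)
Your proposal is correct and follows essentially the same route as the paper: the paper's argument, given in the paragraph preceding the theorem rather than as a separate proof, also identifies each $\mathcal{T}(F_{ij})$ with $\mathcal{N}(\Newt(f_{ij}))$ via \Cref{thm:jos} and then passes to the common refinement via \Cref{thm:normalminkowskisum}. Your final paragraph about the non-generic stratum is more cautious than necessary---since both $\cf_\cg$ and $\mathcal{N}(P_\cg)$ are complete fans generated as the closures of their full-dimensional open cones, agreement on the maximal cones already forces the entire face lattices to coincide, so no separate appeal to \Cref{prop:faceunion} is needed.
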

Standard results from polyhedral geometry yield the following correspondence:
\begin{corollary} \label{cor:maxoidpolytopevertices}
    The vertices of $P_\cg$ are in one-to-one correspondence with the generic maxoids of $\cg$. Lower-dimensional faces correspond to non-generic maxoids.
\end{corollary}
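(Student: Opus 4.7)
The plan is to combine \Cref{thm:maxoidpolytope} with the standard face--cone correspondence between a polytope and its normal fan, and then feed the resulting bijection of cones into \Cref{cor:fan}. Recall that for a polytope $P \subseteq \mathbb{R}^d$ the map $F \mapsto N_F(P)$ is an inclusion-reversing bijection between the face lattice of $P$ and the cones of its normal fan $\mathcal{N}(P)$, with vertices of $P$ corresponding to the maximal (full-dimensional) cones and positive-dimensional faces corresponding to cones of positive codimension.

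First I would apply \Cref{thm:maxoidpolytope} to identify $\mathcal{N}(P_\cg)$ with the maxoid fan $\mathcal{F}_\cg$. The face--cone bijection then provides a one-to-one correspondence between the vertices of $P_\cg$ and the maximal cones of $\mathcal{F}_\cg$. By \Cref{cor:fan}, the maximal cones of $\mathcal{F}_\cg$ are in bijection with the generic maxoids of $\cg$ via the map $\Phi$, so composing yields the claimed one-to-one correspondence between vertices of $P_\cg$ and generic maxoids.

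For the second claim, a face $F$ of $P_\cg$ of positive dimension corresponds under the face--cone bijection to a non-maximal cone of $\mathcal{F}_\cg$. The relative interior of such a cone lies in $\mathcal{H}_\cg$ by \Cref{prop:CIstructurecone}, since a point belongs to a non-maximal cone precisely when at least one of the strict path-weight inequalities in \cref{eqn:coneinequality} becomes an equality. Hence the weight matrices on positive-dimensional faces are non-generic, and the surjection $\Phi$ of \Cref{cor:fan} assigns them to non-generic maxoids; by \Cref{prop:faceunion} each such maxoid is moreover a union of generic maxoids of $\cg$.

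I do not anticipate a real obstacle: the argument is a translation of \Cref{cor:fan} through \Cref{thm:maxoidpolytope}. The only subtlety worth flagging is that the second correspondence is not one-to-one --- \Cref{eg:3nodeDAG} exhibits a non-generic maxoid that coincides with an adjacent generic one --- so ``correspond'' in the second sentence of the corollary must be read as the image under the surjection $\Phi$, and not as a bijection.
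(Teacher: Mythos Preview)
Your proposal is correct and follows essentially the same approach as the paper, which in fact gives no detailed proof at all: the corollary is introduced only with the sentence ``Standard results from polyhedral geometry yield the following correspondence'', relying implicitly on \Cref{thm:maxoidpolytope} and \Cref{cor:fan} together with the face--cone duality for normal fans. Your write-up spells out exactly these ingredients, and your flagging of the non-injectivity in the second sentence (via \Cref{eg:3nodeDAG}) is a useful clarification that the paper leaves implicit.
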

\begin{example} \label{eg:Complete4}
Let $\cg$ be the complete topologically ordered DAG on 4 nodes, which is supported by generic matrices $C \in \mathbb{R}^E \cong \mathbb{R}^6$ of the form depicted below. \\[.5em]
\noindent\begin{minipage}{\textwidth}
\centering
\begin{minipage}[c]{\dimexpr0.3\textwidth}
\begin{tikzpicture}[inner sep=1pt, scale=0.9]
\node[draw, circle] (1) at (0,1) {\strut$1$};
\node[draw, circle] (2) at (-1,0) {\strut$2$};
\node[draw, circle] (3) at (1,0) {\strut$3$};
\node[draw, circle] (4) at (0,-1) {\strut$4$};
\draw[->] (1) -- (2);
\draw[->] (1) -- (3);
\draw[->] (2) -- (4);
\draw[->] (3) -- (4);
\draw[->] (1) -- (4);
\draw[->] (2) -- (3);
\end{tikzpicture}
\end{minipage}
\begin{minipage}[c]{\dimexpr0.3\textwidth}
\vfil
\[
  C = \begin{pmatrix}
  -\infty & c_{12} & c_{13} & c_{14} \\
  -\infty & -\infty & c_{24} & c_{24} \\
  -\infty & -\infty & -\infty & c_{34}\\
  -\infty & -\infty & -\infty & -\infty
  \end{pmatrix}
\]
\vfil
\end{minipage}
\end{minipage} \\[.5em]

The critical path structure (and by \Cref{lem:markovcritpaths}, $\globalCsep{\cg}{C}$) of a fixed $(\cg,C)$ is fully determined by which terms of the following expressions attain their respective maxima:
\begin{align*}
    F_{13}(C) &= \max \{c_{12}+c_{23}, c_{23}\}, \\
    F_{24}(C) &= \max \{c_{23}+c_{34}, c_{24}\}, \\
    F_{14}(C) &= \max \{c_{14}, c_{12}+c_{24}, c_{13}+c_{34}, c_{12}+c_{23} + c_{34} \}.
\end{align*}
For generic $C$, this specifies the full-dimensional cone of $\cf_\cg$ in which it lies. In this example, the fan $\cf_\cg \subset \mathbb{R^6}$ has 9 full-dimensional cones and a 3-dimensional lineality space. The maxoid polytope $P_\cg$ is a 3-dimensional polytope in $\mathbb{R}^6$, the vertices of which correspond bijectively to the 9 generic maxoids of $\cg$.

\begin{figure}[h!]
    \centering
    \label{fig:ex33polytope}
\begin{tikzpicture}[
  x = {(-1.2cm, 0.1cm)},
  y = {(0.9cm, -1.1cm)},
  z = {(-1cm, -1.5cm)},
  scale = 1.2,
  color = {lightgray}]

  \coordinate (v0_unnamed__1) at (-1, 0, -1);
  \coordinate (v1_unnamed__1) at (2, 2, -1);
  \coordinate (v2_unnamed__1) at (2, -1, 2);
  \coordinate (v3_unnamed__1) at (0, -1, 0);
  \coordinate (v4_unnamed__1) at (1, 2, -1);
  \coordinate (v5_unnamed__1) at (0, 0, -1);
  \coordinate (v6_unnamed__1) at (-1, 0, 1);
  \coordinate (v7_unnamed__1) at (-1, -1, 0);
  \coordinate (v8_unnamed__1) at (-1, -1, 2);

  \definecolor{vertexcolor_unnamed__1}{rgb}{ 0 0 0 }

  \definecolor{maxoid_1}{RGB}{51,34,136}
  \definecolor{maxoid_2}{RGB}{17,119,51}
  \definecolor{maxoid_3}{RGB}{68,170,153}
  \definecolor{maxoid_4}{RGB}{136,204,238}
  \definecolor{maxoid_5}{RGB}{221,204,119}
  \colorlet{maxoid_6}{orange!65}
  \definecolor{maxoid_7}{RGB}{204,102,119}
  \definecolor{maxoid_8}{RGB}{170,68,153}
  \definecolor{maxoid_9}{RGB}{136,34,85}

  \tikzstyle{maxoid_label} = [inner sep=2pt, rectangle, rounded corners=3pt]

  \tikzstyle{vertexstyle_unnamed__1_0} = [circle, scale=0.25, fill=maxoid_9]
  \tikzstyle{vertexstyle_unnamed__1_1} = [circle, scale=0.25, fill=maxoid_6]
  \tikzstyle{vertexstyle_unnamed__1_2} = [circle, scale=0.25, fill=maxoid_1]
  \tikzstyle{vertexstyle_unnamed__1_3} = [circle, scale=0.25, fill=maxoid_3]
  \tikzstyle{vertexstyle_unnamed__1_4} = [circle, scale=0.25, fill=maxoid_7]
  \tikzstyle{vertexstyle_unnamed__1_5} = [circle, scale=0.25, fill=maxoid_8]
  \tikzstyle{vertexstyle_unnamed__1_6} = [circle, scale=0.25, fill=maxoid_5]
  \tikzstyle{vertexstyle_unnamed__1_7} = [circle, scale=0.25, fill=maxoid_4]
  \tikzstyle{vertexstyle_unnamed__1_8} = [circle, scale=0.25, fill=maxoid_2]

  \definecolor{facetcolor_unnamed__1}{rgb}{ 0.8 0.8 0.8 }

  \definecolor{edgecolor_unnamed__1}{rgb}{ 0 0 0 }
  \tikzstyle{facetstyle_unnamed__1} = [fill=facetcolor_unnamed__1, fill opacity=0.2]

  \begin{scope}[blend group=darken]

    \fill[facetstyle_unnamed__1] (v4_unnamed__1) -- (v1_unnamed__1) -- (v2_unnamed__1) -- (v8_unnamed__1) -- (v6_unnamed__1) -- cycle; %
    \fill[facetstyle_unnamed__1] (v0_unnamed__1) -- (v7_unnamed__1) -- (v3_unnamed__1) -- (v5_unnamed__1) -- cycle; %
    \fill[facetstyle_unnamed__1] (v2_unnamed__1) -- (v3_unnamed__1) -- (v7_unnamed__1) -- (v8_unnamed__1) -- cycle; %
    \fill[facetstyle_unnamed__1] (v7_unnamed__1) -- (v0_unnamed__1) -- (v6_unnamed__1) -- (v8_unnamed__1) -- cycle; %
    \fill[facetstyle_unnamed__1] (v2_unnamed__1) -- (v3_unnamed__1) -- (v5_unnamed__1) -- (v1_unnamed__1) -- cycle; %
    \fill[facetstyle_unnamed__1] (v0_unnamed__1) -- (v4_unnamed__1) -- (v6_unnamed__1) -- cycle; %
    \fill[facetstyle_unnamed__1] (v0_unnamed__1) -- (v4_unnamed__1) -- (v1_unnamed__1) -- (v5_unnamed__1) -- cycle; %

  \draw[facetstyle_unnamed__1] (v2_unnamed__1) -- (v1_unnamed__1) -- (v4_unnamed__1) -- (v6_unnamed__1) -- (v8_unnamed__1) -- (v2_unnamed__1) -- cycle;

    \draw[line width=2pt,maxoid_1] (v2_unnamed__1) -- (v3_unnamed__1);
    \draw[line width=2pt,maxoid_1] (v2_unnamed__1) -- (v1_unnamed__1);
    \draw[line width=2pt,maxoid_1] (v2_unnamed__1) -- (v8_unnamed__1);

    \draw[line width=2pt,maxoid_2] (v7_unnamed__1) -- (v8_unnamed__1);
    \draw[line width=2pt,maxoid_2] (v6_unnamed__1) -- (v8_unnamed__1);

    \draw[line width=2pt,maxoid_3] (v3_unnamed__1) -- (v7_unnamed__1);
    \draw[line width=2pt,maxoid_3] (v3_unnamed__1) -- (v5_unnamed__1);

    \draw[line width=2pt,maxoid_4] (v7_unnamed__1) -- (v0_unnamed__1);

    \draw[line width=2pt,maxoid_5] (v6_unnamed__1) -- (v4_unnamed__1);
    \draw[line width=2pt,maxoid_5] (v6_unnamed__1) -- (v0_unnamed__1);

    \draw[line width=2pt,maxoid_6] (v1_unnamed__1) -- (v4_unnamed__1);
    \draw[line width=2pt,maxoid_6] (v1_unnamed__1) -- (v5_unnamed__1);

    \draw[line width=2pt,maxoid_7] (v4_unnamed__1) -- (v0_unnamed__1);

    \draw[line width=2pt,maxoid_8] (v5_unnamed__1) -- (v0_unnamed__1);
  \end{scope}

  \foreach \i in {7,3,0,5} {
    \node at (v\i_unnamed__1) [vertexstyle_unnamed__1_\i] {};
  }

  \foreach \i in {8,6,2,4,1} {
    \node at (v\i_unnamed__1) [vertexstyle_unnamed__1_\i] {};
  }

  \node[maxoid_label, left       =1mm of v2_unnamed__1, text=white, fill=maxoid_1, draw=maxoid_1] {1};
  \node[maxoid_label, below      =1mm of v8_unnamed__1, text=white, fill=maxoid_2, draw=maxoid_2] {2};
  \node[maxoid_label, above      =1mm of v3_unnamed__1, text=white, fill=maxoid_3, draw=maxoid_3] {3};
  \node[maxoid_label, below right=1mm of v7_unnamed__1, text=black, fill=maxoid_4, draw=maxoid_4] {4};
  \node[maxoid_label, below      =1mm of v6_unnamed__1, text=black, fill=maxoid_5, draw=maxoid_5] {5};
  \node[maxoid_label, below      =1mm of v1_unnamed__1, text=black, fill=maxoid_6, draw=maxoid_6] {6};
  \node[maxoid_label, below right=1mm of v4_unnamed__1, text=white, fill=maxoid_7, draw=maxoid_7] {7};
  \node[maxoid_label, above      =1mm of v5_unnamed__1, text=white, fill=maxoid_8, draw=maxoid_8] {8};
  \node[maxoid_label, above right=1mm of v0_unnamed__1, text=white, fill=maxoid_9, draw=maxoid_9] {9};

\end{tikzpicture}
\caption{The maxoid polytope for the complete graph in \Cref{eg:Complete4}. The colored vertex labels refer to the types of maxoids in \Cref{fig:hasse-diagram-maxoids}.}
\end{figure}

\Cref{fig:hasse-diagram-maxoids} depicts the entire face lattice of $P_\cg$, which is dual to the lattice of the fan~$\cf_\cg$. The elements of the lattice (corresponding to the faces of $P_\cg$) are color-coded by the maxoids they give rise to, which are depicted above the Hasse diagram. By \Cref{prop:faceunion}, the non-generic maxoid arising at a face of dimension $\geq 1$ of $P_\cg$ is the union of the generic maxoids corresponding to the vertices contained in said face. In this particular example, any collection of generic maxoids lying on a common face is ordered linearly with respect to inclusion. This implies that the maxoid corresponding to any face is the largest maxoid arising from the vertices it contains.

\begin{figure}
\centering
\begin{tikzpicture}[yscale=1.5]
  \definecolor{maxoid_1}{RGB}{51,34,136}
  \definecolor{maxoid_2}{RGB}{17,119,51}
  \definecolor{maxoid_3}{RGB}{68,170,153}
  \definecolor{maxoid_4}{RGB}{136,204,238}
  \definecolor{maxoid_5}{RGB}{221,204,119}
  \colorlet{maxoid_6}{orange!65}
  \definecolor{maxoid_7}{RGB}{204,102,119}
  \definecolor{maxoid_8}{RGB}{170,68,153}
  \definecolor{maxoid_9}{RGB}{136,34,85}

  \tikzstyle{maxoid_1} = [text=white, fill=maxoid_1, draw=maxoid_1, inner sep=2pt, rectangle, rounded corners=3pt]
  \tikzstyle{maxoid_2} = [text=white, fill=maxoid_2, draw=maxoid_2, inner sep=2pt, rectangle, rounded corners=3pt]
  \tikzstyle{maxoid_3} = [text=white, fill=maxoid_3, draw=maxoid_3, inner sep=2pt, rectangle, rounded corners=3pt]
  \tikzstyle{maxoid_4} = [text=black, fill=maxoid_4, draw=maxoid_4, inner sep=2pt, rectangle, rounded corners=3pt]
  \tikzstyle{maxoid_5} = [text=black, fill=maxoid_5, draw=maxoid_5, inner sep=2pt, rectangle, rounded corners=3pt]
  \tikzstyle{maxoid_6} = [text=black, fill=maxoid_6, draw=maxoid_6, inner sep=2pt, rectangle, rounded corners=3pt]
  \tikzstyle{maxoid_7} = [text=white, fill=maxoid_7, draw=maxoid_7, inner sep=2pt, rectangle, rounded corners=3pt]
  \tikzstyle{maxoid_8} = [text=white, fill=maxoid_8!85, draw=maxoid_8!85, inner sep=2pt, rectangle, rounded corners=3pt]
  \tikzstyle{maxoid_9} = [text=white, fill=maxoid_9, draw=maxoid_9, inner sep=2pt, rectangle, rounded corners=3pt]

  \node[maxoid_1] (1) at (0,0) {1};
  \node[maxoid_6] (6) at (-3,-1) {6};
  \node[maxoid_5] (5) at (-1,-1) {5};
  \node[maxoid_3] (3) at (1,-1) {3};
  \node[maxoid_2] (2) at (3,-1) {2};
  \node[maxoid_8] (8) at (-2,-2) {8};
  \node[maxoid_7] (7) at (0,-2) {7};
  \node[maxoid_4] (4) at (2,-2) {4};
  \node[maxoid_9] (9) at (0,-3) {9};

  \draw (1) -- (6);
  \draw (1) -- (5);
  \draw (1) -- (3);
  \draw (1) -- (2);

  \draw (6) -- (8);
  \draw (6) -- (7);
  \draw (5) -- (7);
  \draw (3) -- (8);
  \draw (3) -- (4);
  \draw (2) -- (7);
  \draw (2) -- (4);

  \draw (8) -- (9);
  \draw (7) -- (9);
  \draw (4) -- (9);
\end{tikzpicture}
\caption{The maxoid poset of the complete graph. Each node corresponds to one of the maxoids listed in \Cref{fig:hasse-diagram-maxoids}.}
\label{fig:maxoid-poset}
\end{figure}
The maxoid poset of $\cg$ is a coarsening of the face poset of $P_\cg$ obtained by identifying nodes of the same color in \Cref{fig:hasse-diagram-maxoids}. It is shown separately in \Cref{fig:maxoid-poset} for clarity. This~poset is, in this case, a graded lattice but it is not polytopal because the interval between \tikz[baseline]{\definecolor{maxoid_1}{RGB}{51,34,136}\node[yshift=3pt, text=white, fill=maxoid_1, draw=maxoid_1, inner sep=2pt, rectangle, rounded corners=3pt] {$1$}} and \tikz[baseline]{\definecolor{maxoid_7}{RGB}{204,102,119}\node[yshift=3pt, text=white, fill=maxoid_7, draw=maxoid_7, inner sep=2pt, rectangle, rounded corners=3pt] {$7$}} violates the \emph{diamond property} \cite[Theorem~2.7]{Ziegler}.
\end{example}

\begin{figure}
    \centering

\begin{tikzpicture}[x  = {(1em, 0em)},
                    y  = {(0em, 10em)},
                    scale = .95,
                    color = {lightgray},
                    rotate=90,transform shape]

  \coordinate (v0_unnamed__1) at (0, 3);
  
  \coordinate (v1_unnamed__1) at (26, 2);
  \coordinate (v2_unnamed__1) at (19.5, 2);
  \coordinate (v3_unnamed__1) at (13, 2);
  \coordinate (v4_unnamed__1) at (7, 2);
  \coordinate (v5_unnamed__1) at (1, 2);
  \coordinate (v6_unnamed__1) at (-5.8, 2);
  \coordinate (v7_unnamed__1) at (-12.2, 2);
  \coordinate (v8_unnamed__1) at (-18.5, 2);
  \coordinate (v9_unnamed__1) at (-25, 2);
  
  \coordinate (v10_unnamed__1) at (28, 1);
  \coordinate (v11_unnamed__1) at (24, 1);
  \coordinate (v12_unnamed__1) at (20, 1);
  \coordinate (v13_unnamed__1) at (15, 1);
  \coordinate (v14_unnamed__1) at (11, 1);
  \coordinate (v15_unnamed__1) at (7, 1);
  \coordinate (v16_unnamed__1) at (3, 1);
  \coordinate (v17_unnamed__1) at (-1.5, 1);
  \coordinate (v18_unnamed__1) at (-5.25789, 1);
  \coordinate (v19_unnamed__1) at (-8.45789, 1);
  \coordinate (v20_unnamed__1) at (-11.6579, 1);
  \coordinate (v21_unnamed__1) at (-14.8579, 1);
  \coordinate (v22_unnamed__1) at (-18.0579, 1);
  \coordinate (v23_unnamed__1) at (-21.2579, 1);
  
  \coordinate (v24_unnamed__1) at (25, 0);
  \coordinate (v25_unnamed__1) at (19, 0);
  \coordinate (v26_unnamed__1) at (13, 0);
  \coordinate (v27_unnamed__1) at (7, 0);
  \coordinate (v28_unnamed__1) at (1, 0);
  \coordinate (v29_unnamed__1) at (-7, 0);
  \coordinate (v30_unnamed__1) at (-15, 0);
  
  \coordinate (v31_unnamed__1) at (0, -1);

  \definecolor{vertexcolor_unnamed__1_0}{rgb}{ 0 0 0 }
  \definecolor{vertexcolor_unnamed__1_1}{rgb}{ 1 1 1 }

  \definecolor{maxoid_1}{RGB}{51,34,136}
  \definecolor{maxoid_2}{RGB}{17,119,51}
  \definecolor{maxoid_3}{RGB}{68,170,153}
  \definecolor{maxoid_4}{RGB}{136,204,238}
  \definecolor{maxoid_5}{RGB}{221,204,119}
  \colorlet{maxoid_6}{orange!65}
  \definecolor{maxoid_7}{RGB}{204,102,119}
  \definecolor{maxoid_8}{RGB}{170,68,153}
  \definecolor{maxoid_9}{RGB}{136,34,85}

  \definecolor{vertexbordercolor_unnamed__1}{rgb}{ 0 0 0 }

  \tikzstyle{maxoid_1} = [text=white, fill=maxoid_1, draw=maxoid_1]
  \tikzstyle{maxoid_2} = [text=white, fill=maxoid_2, draw=maxoid_2]
  \tikzstyle{maxoid_3} = [text=white, fill=maxoid_3, draw=maxoid_3]
  \tikzstyle{maxoid_4} = [text=black, fill=maxoid_4, draw=maxoid_4]
  \tikzstyle{maxoid_5} = [text=black, fill=maxoid_5, draw=maxoid_5]
  \tikzstyle{maxoid_6} = [text=black, fill=maxoid_6, draw=maxoid_6]
  \tikzstyle{maxoid_7} = [text=white, fill=maxoid_7, draw=maxoid_7]
  \tikzstyle{maxoid_8} = [text=white, fill=maxoid_8!85, draw=maxoid_8!85]
  \tikzstyle{maxoid_9} = [text=white, fill=maxoid_9, draw=maxoid_9]

  
  \tikzstyle{vertexstyle_unnamed__1_1} = [inner sep=2pt, rectangle, rounded corners=3pt, maxoid_1,]
  \tikzstyle{vertexstyle_unnamed__1_2} = [inner sep=2pt, rectangle, rounded corners=3pt, maxoid_2,]
  \tikzstyle{vertexstyle_unnamed__1_3} = [inner sep=2pt, rectangle, rounded corners=3pt, maxoid_3,]
  \tikzstyle{vertexstyle_unnamed__1_4} = [inner sep=2pt, rectangle, rounded corners=3pt, maxoid_4,]
  \tikzstyle{vertexstyle_unnamed__1_5} = [inner sep=2pt, rectangle, rounded corners=3pt, maxoid_5,]
  \tikzstyle{vertexstyle_unnamed__1_6} = [inner sep=2pt, rectangle, rounded corners=3pt, maxoid_6,]
  \tikzstyle{vertexstyle_unnamed__1_7} = [inner sep=2pt, rectangle, rounded corners=3pt, maxoid_7,]
  \tikzstyle{vertexstyle_unnamed__1_8} = [inner sep=2pt, rectangle, rounded corners=3pt, maxoid_8,]
  \tikzstyle{vertexstyle_unnamed__1_9} = [inner sep=2pt, rectangle, rounded corners=3pt, maxoid_9,]
   
  \tikzstyle{vertexstyle_unnamed__1_10} = [inner sep=2pt, rectangle, rounded corners=3pt, maxoid_1,]
  \tikzstyle{vertexstyle_unnamed__1_11} = [inner sep=2pt, rectangle, rounded corners=3pt, maxoid_1,]
  \tikzstyle{vertexstyle_unnamed__1_12} = [inner sep=2pt, rectangle, rounded corners=3pt, maxoid_1,]
  \tikzstyle{vertexstyle_unnamed__1_13} = [inner sep=2pt, rectangle, rounded corners=3pt, maxoid_2,]
  \tikzstyle{vertexstyle_unnamed__1_14} = [inner sep=2pt, rectangle, rounded corners=3pt, maxoid_2,]
  \tikzstyle{vertexstyle_unnamed__1_15} = [inner sep=2pt, rectangle, rounded corners=3pt, maxoid_3,]
  \tikzstyle{vertexstyle_unnamed__1_16} = [inner sep=2pt, rectangle, rounded corners=3pt, maxoid_3,]
  \tikzstyle{vertexstyle_unnamed__1_17} = [inner sep=2pt, rectangle, rounded corners=3pt, maxoid_4,]
  \tikzstyle{vertexstyle_unnamed__1_18} = [inner sep=2pt, rectangle, rounded corners=3pt, maxoid_5,]
  \tikzstyle{vertexstyle_unnamed__1_19} = [inner sep=2pt, rectangle, rounded corners=3pt, maxoid_5,]
  \tikzstyle{vertexstyle_unnamed__1_20} = [inner sep=2pt, rectangle, rounded corners=3pt, maxoid_6,]
  \tikzstyle{vertexstyle_unnamed__1_21} = [inner sep=2pt, rectangle, rounded corners=3pt, maxoid_6,]
  \tikzstyle{vertexstyle_unnamed__1_22} = [inner sep=2pt, rectangle, rounded corners=3pt, maxoid_7,]
  \tikzstyle{vertexstyle_unnamed__1_23} = [inner sep=2pt, rectangle, rounded corners=3pt, maxoid_8,]
  
  \tikzstyle{vertexstyle_unnamed__1_24} = [inner sep=2pt, rectangle, rounded corners=3pt, maxoid_1,]
  \tikzstyle{vertexstyle_unnamed__1_25} = [inner sep=2pt, rectangle, rounded corners=3pt, maxoid_1,]
  \tikzstyle{vertexstyle_unnamed__1_26} = [inner sep=2pt, rectangle, rounded corners=3pt, maxoid_1,]
  \tikzstyle{vertexstyle_unnamed__1_27} = [inner sep=2pt, rectangle, rounded corners=3pt, maxoid_2,]
  \tikzstyle{vertexstyle_unnamed__1_28} = [inner sep=2pt, rectangle, rounded corners=3pt, maxoid_3,]
  \tikzstyle{vertexstyle_unnamed__1_29} = [inner sep=2pt, rectangle, rounded corners=3pt, maxoid_5,]
  \tikzstyle{vertexstyle_unnamed__1_30} = [inner sep=2pt, rectangle, rounded corners=3pt, maxoid_6,]
  

  \definecolor{edgecolor_unnamed__1}{rgb}{ 0 0 0 }
  \tikzstyle{edgestyle_unnamed__1} = [thick,color=edgecolor_unnamed__1]


  \foreach \i/\k in {10/1,10/2,11/1,11/3,12/1,12/6,13/2,13/4,14/2,14/5,15/3,15/4,16/3,16/8,17/4,17/9,18/5,18/7,19/5,19/9,20/6,20/7,21/6,21/8,22/7,22/9,23/8,23/9,24/10,24/12,24/14,24/18,24/20,25/10,25/11,25/13,25/15,26/11,26/12,26/16,26/21,27/13,27/14,27/17,27/19,28/15,28/16,28/17,28/23,29/18,29/19,29/22,30/20,30/21,30/22,30/23} {
   \draw[edgestyle_unnamed__1] (v\i_unnamed__1) -- (v\k_unnamed__1);
  }

  \foreach \i/\label in {
    1/1,2/2,3/3,4/4,5/5,6/6,7/7,8/8,9/9,
    10/1 2,11/1 3,12/1 6,13/2 4,14/2 5,15/3 4,16/3 8,17/4 9,18/5 7,19/5 9,20/6 7,21/6 8,22/7 9,23/8 9,
    24/1 2 5 6 7,25/1 2 3 4,26/1 3 6 8,27/2 4 5 9,28/3 4 8 9,29/5 7 9,30/6 7 8 9
  } {
    \node at (v\i_unnamed__1) [vertexstyle_unnamed__1_\i] {\label};
  }

  \node[above=15mm of v1_unnamed__1, inner sep=2pt, rectangle, rounded corners=3pt,draw=maxoid_1,text=maxoid_1,align=center] {
    $\CI{1,3|2}$,\\$\CI{1,3|2,4}$,\\$\CI{1,4|2}$,\\$\CI{1,4|3}$,\\$\CI{1,4|2,3}$,\\$\CI{2,4|3}$,\\$\CI{2,4|1,3}$  
  };

  \node[above=15mm of v2_unnamed__1, inner sep=2pt, rectangle, rounded corners=3pt,draw=maxoid_2,text=maxoid_2,align=center] {
    $\CI{1,3|2}$,\\$\CI{1,3|2,4}$,\\$\CI{1,4|2}$,\\$\CI{1,4|2,3}$
  };

  \node[above=15mm of v3_unnamed__1, inner sep=2pt, rectangle, rounded corners=3pt,draw=maxoid_3,text=maxoid_3,align=center] {
    $\CI{1,3|2}$,\\$\CI{2,4|1,3}$
  };

  \node[above=15mm of v4_unnamed__1, inner sep=2pt, rectangle, rounded corners=3pt,draw=maxoid_4!75!black,text=maxoid_4!75!black] {
    $\CI{1,3|2}$
  };

  \node[above=15mm of v5_unnamed__1, inner sep=2pt, rectangle, rounded corners=3pt,draw=maxoid_5!50!black,text=maxoid_5!50!black,align=center] {
    $\CI{1,4|2}$,\\$\CI{1,4|2,3}$
  };

  \node[above=15mm of v6_unnamed__1, inner sep=2pt, rectangle, rounded corners=3pt,draw=maxoid_6!50!black,text=maxoid_6!50!black,align=center] {
    $\CI{1,4|3}$,\\$\CI{1,4|2,3}$,\\$\CI{2,4|3}$,\\$\CI{2,4|1,3}$
  };

  \node[above=15mm of v7_unnamed__1, inner sep=2pt, rectangle, rounded corners=3pt,draw=maxoid_7,text=maxoid_7] {
    $\CI{1,4|2,3}$
  };

  \node[above=15mm of v8_unnamed__1, inner sep=2pt, rectangle, rounded corners=3pt,draw=maxoid_8,text=maxoid_8] {
    $\CI{2,4|1,3}$
  };

  \node[above=15mm of v9_unnamed__1, inner sep=2pt, rectangle, rounded corners=3pt,draw=maxoid_9,text=maxoid_9] {
    $\varnothing$
  };

\end{tikzpicture}
    \caption{Face lattice of \(P_\cg\) with vertices at the top, colored by type of maxoid. By \Cref{prop:faceunion}, each face of dimension $\geq 1$ inherits the color of the largest generic maxoid it contains.}
    \label{fig:hasse-diagram-maxoids}
\end{figure}

An application of \Cref{cor:maxoidpolytopevertices} is illustrated in the two tables below. \Cref{tab:totalmaxoids} counts the number of distinct generic and non-generic maxoids which are realized by some topologically ordered, transitively closed DAG on $n$ non-isolated nodes. (By \Cref{lem:closurelemma}, these are complete enumerations of maxoids on $n$ nodes up to relabeling of vertices.)

\begin{minted}{julia-repl}
julia> using Maxoids, Oscar;
julia> GG = all_top_ordered_TDAGs(4);
julia> length(all_maxoids(collect(GG))
41
julia> length(all_maxoids(collect(GG), generic_only = true))
40
\end{minted}
\begin{table}[h!]
    \centering
    \begin{tabular}{cccc}
         \#$V $& \#TDAGs & \#distinct maxoids & \#distinct generic maxoids \\
         \hline
         3 & 3 & 4 & 4 \\
         4 & 18 & 41 & 40 \\
         5 & 181 & 987 & 892 \\
         6 & 32768 & $-$ & $-$
    \end{tabular}
    \caption{Enumeration of all distinct generic and non-generic maxoids arising from topologically ordered (TDAGs) on 3, 4 and 5 nodes.}
    \label{tab:totalmaxoids}
\end{table}

    The entries for 6 nodes are missing because enumerating the distinct maxoids in this way requires their explicit computation, and doing this for all 32768 TDAGs on 6 nodes did not terminate in under 24 hours. On the other hand, enumerating the distinct generic maxoids on a specific DAG $\cg$ is possible without computing them explicitly; by \Cref{cor:maxoidpolytopevertices} it suffices to construct $P_\cg$ and count its vertices. In \Cref{tab:completeDAGmaxoids} we do this for the complete topologically ordered DAG (i.e. the DAG with edge set $\{ (i,j) : 1 \leq i < j \leq n \}$) on $n$ nodes for $ 3\leq n \leq 6$, as well as providing information about the dimension of the polytope.

\begin{minted}{julia-repl}
julia> using Maxoids, Oscar;
julia> complete_DAG(4) |> maxoid_polytope |> f_vector |> transpose
1×3 transpose(::Vector{ZZRingElem}) with eltype ZZRingElem:
 9  14  7
\end{minted}

\begin{table}[h!]
    \centering
    \begin{tabular}{cccc}
         \#$V $& \#$E$ & $\dim(P_\cg)$ & \#vertices $P_\cg$ \\
         \hline
         3 & 3 & 1 & 2 \\
         4 & 6 & 3 & 9 \\
         5 & 10 & 6 & 103 \\
         6 & 15 & 10 & 3324
    \end{tabular}
    \caption{Table of values for the complete topologically ordered DAGs on 3,4,5, and 6 nodes. The cardinality of the edge set \#$E$ is the ambient dimension of the polytope, whereas the number of vertices of $P_\cg$ is the number of distinct generic maxoids on $\cg$. }
    \label{tab:completeDAGmaxoids}
\end{table}

The realization of the maxoid fan as the normal fan of a Newton polytope is very close in construction to the Gr\"obner fan
of an ideal. This raises the question whether we can realize the maxoid fan as a Gr\"obner fan. This is not always possible
and we present one obstruction to the maxoid fan being a Gr\"obner fan.

\begin{definition}
  Let \(\cg = (V, E)\) be a DAG and consider the polynomial ring $k[E] \defas k[x_e : e \in E]$ with one indeterminate per edge over a field~$k$. If $\pi$ is a path (or any sequence of edges) in $\cg$, then $x_\pi \defas \prod_{e \in \pi} x_e$ is the associated \emph{path monomial}. The \emph{binomial critical path ideal} of $\cg$ is the ideal of $k[E]$ given by
  \[
    J_\cg = \left\langle\, x_\pi - x_{\pi'} : \pi,\pi'\in P(i,j)\ \text{for all } i\neq j \,\right\rangle.
  \]
\end{definition}

The lattice ideal \(J_\cg\) encodes the linear inequalities from \Cref{prop:CIstructurecone} as binomials. This appears
as a reasonable construction since we also have the following relationship between the maxoid fan and the Gr\"obner fan of \(J_\cg\).

\begin{lemma}
    The Gr\"obner fan of \(J_\cg\) is a refinement of the maxoid fan.

    \begin{proof}
        Let \(C,C'\in\mathbb{R}^E\) be two weight matrices for \(\cg\) in the same full-dimensional
        cone of the Gr\"obner cones of \(J_\cg\). Then, we know that \(\initial(J_\cg) = \initial[C'](J_\cg)\) and
        in particular \(\initial(f) = \initial[C'](f)\) for every generator \(f\) of \(J_\cg\).

        The choice of leading term \(\initial(f)\) of \(f = x_\pi - x_{\pi'}\) depends on the linear inequality \[
            \omega_C(\pi) > \omega_C(\pi')
        \] due to \Cref{prop:CIstructurecone}. But \(C\) and \(C'\) satisfy the same inequalities, so in particular their
        critical paths satisfy the same inequalities. This means that \(\cone_\cg(C) = \cone_\cg(C')\) and since both weight
        matrices were chosen from the same cone of the Gr\"obner fan, \(\cone_\cg(C)\) is refined by the same cone
        and we get the statement.
    \end{proof}
\end{lemma}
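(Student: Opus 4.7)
The plan is to show that any two weight matrices in the interior of the same maximal Gr\"obner cone of $J_\cg$ must lie in a common cone of the maxoid fan $\cf_\cg$. First I would fix generic $C, C' \in \mathbb{R}^E$ in the interior of the same full-dimensional Gr\"obner cone of $J_\cg$. By the definition of the Gr\"obner fan, we have $\initial(J_\cg) = \initial[C'](J_\cg)$, and moreover the map $f \mapsto \initial(f)$ is constant in $C$ on the interior of each maximal Gr\"obner cone; hence $\initial(f) = \initial[C'](f)$ for every $f \in J_\cg$ and in particular for every binomial generator.

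Next I would specialize this to a generator $f = x_\pi - x_{\pi'}$ with $\pi, \pi' \in P(i,j)$. For generic weights, $\initial(f)$ collapses to a single monomial: it equals $x_\pi$ if $\omega_C(\pi) > \omega_C(\pi')$ and $x_{\pi'}$ otherwise. The equality of initial forms for $C$ and $C'$ thus forces $C$ and $C'$ to agree on the sign of $\omega(\pi) - \omega(\pi')$ for every pair of distinct paths between every pair of nodes.

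Finally I would invoke \Cref{prop:CIstructurecone}, which realizes $\cone_\cg(C)$ as the solution set of exactly the strict inequalities $\omega(\pi^{\mathrm{crit}}) > \omega(\pi)$ for all non-critical paths $\pi$. Since $C$ and $C'$ satisfy the same collection of path-weight inequalities, they have identical critical paths and hence $\cone_\cg(C) = \cone_\cg(C')$. This shows that the given Gr\"obner cone is contained in a single maxoid cone, which is the refinement claim.

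The main subtlety I would need to treat carefully is the passage from equality of initial ideals to equality of initial forms on each chosen generator. For general ideals this implication can fail, but for $J_\cg$ the generators are binomials, and under generic $C$ the initial form of a binomial is a single monomial; the monomial selected is precisely determined by whether $C$ lies on one side or the other of the hyperplane $\omega(\pi) = \omega(\pi')$, and this side is fixed on the interior of the Gr\"obner cone. This makes the above argument go through without further complication.
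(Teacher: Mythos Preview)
Your proposal is correct and follows essentially the same approach as the paper: pick $C, C'$ in the interior of the same maximal Gr\"obner cone, observe that the initial forms of the binomial generators $x_\pi - x_{\pi'}$ then agree, translate this into agreement of all the path-weight inequalities $\omega(\pi) > \omega(\pi')$, and conclude $\cone_\cg(C) = \cone_\cg(C')$ via \Cref{prop:CIstructurecone}. You are in fact more explicit than the paper in flagging the passage from equality of initial ideals to equality of initial forms on each generator as a point requiring care; the paper simply asserts this step.
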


The following example shows that the Gr\"obner fan of $J_\cg$ has a richer combinatorial structure than the maxoid fan and hence is often a strict refinement of the maxoid fan.

\begin{proposition}\label{prop:alternating-cycle-in-ideal}
    Let \(\cg\) be the following DAG \[
        \begin{tikzcd}
            1\arrow[d]\arrow[rd] & 2\arrow[d]\arrow[ld] \\
            4 & 3\arrow[l]
        \end{tikzcd}.
    \] Then, a universal Gr\"obner basis of \(J_\cg = \langle\, x_{14} - x_{13}x_{34}, x_{24} - x_{23}x_{34}\,\rangle\)
    contains the binomial \(x_{13}x_{24} - x_{14}x_{23}\).
    \begin{proof}
        Suppose \(<\) is a term ordering on \(k[E]\) such that \(\initial[<](x_{14} - x_{13}x_{34}) = x_{13}x_{34}\)
        and \(\initial[<](x_{24} - x_{23}x_{34}) = x_{23}x_{34}\). Then, the S-pair of those two polynomials is \[
          x_{23}(x_{14} - x_{13}x_{34}) - x_{13}(x_{24} - x_{23}x_{34}) = x_{13}x_{24} - x_{14}x_{23}. \qedhere
        \]
  \end{proof}
\end{proposition}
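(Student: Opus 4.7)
The plan is to recognize the universal Gr\"obner basis of $J_\cg$ as the union of reduced Gr\"obner bases over all term orders, and to exhibit one monomial order for which the binomial $x_{13}x_{24} - x_{14}x_{23}$ must appear in the reduced basis. Since $J_\cg$ has only two simple generators, the whole argument reduces to analyzing one S-pair via Buchberger's criterion.

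First I verify membership. The identity
\[
  x_{13}\bigl(x_{24} - x_{23}x_{34}\bigr) - x_{23}\bigl(x_{14} - x_{13}x_{34}\bigr) = x_{13}x_{24} - x_{14}x_{23}
\]
holds because the mixed terms $\pm x_{13}x_{23}x_{34}$ cancel, so $x_{13}x_{24} - x_{14}x_{23} \in J_\cg$.

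Next I choose a weight vector $\omega \in \mathbb{R}^E$ which makes $\omega_{34}$ large and assigns moderate values to the remaining edge weights; refining by any tie-breaking monomial order yields a term order $<$ whose leading monomials on the generators are $x_{13}x_{34}$ and $x_{23}x_{34}$ respectively. The least common multiple of these leading terms is $x_{13}x_{23}x_{34}$, and the corresponding S-polynomial is precisely $x_{14}x_{23} - x_{13}x_{24}$, as computed above.

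The decisive step is the observation that neither monomial $x_{14}x_{23}$ nor $x_{13}x_{24}$ is divisible by either of the leading terms $x_{13}x_{34}$, $x_{23}x_{34}$, since neither contains the variable $x_{34}$. Hence the S-polynomial cannot be further reduced modulo the original generators, so Buchberger's criterion forces the binomial (oriented by $<$) to be adjoined to the Gr\"obner basis. Its leading monomial is likewise not divisible by any leading term of the other basis elements, so it survives into the reduced Gr\"obner basis under $<$, proving that $x_{13}x_{24} - x_{14}x_{23}$ belongs to a universal Gr\"obner basis. The only mild obstacle is constructing a legitimate term order with the desired leading terms; this is handled cleanly by the weight vector construction, after which the gcd structure involving the extraneous variable $x_{34}$ makes irreducibility automatic.
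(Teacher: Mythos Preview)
Your proof is correct and follows essentially the same approach as the paper: fix a term order with leading terms $x_{13}x_{34}$ and $x_{23}x_{34}$ and compute the resulting S-polynomial. Your version is in fact more complete than the paper's, since you explicitly justify why the S-polynomial does not reduce to zero (neither monomial contains $x_{34}$) and hence must appear in the reduced Gr\"obner basis for that order; the paper leaves this step implicit.
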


This means that compared to the maxoid fan the Gr\"obner fan of \(J_\cg\) contains a hyperplane \(c_{13}+c_{24} = c_{14} + c_{23}\)
subdividing \(\mathbb{R}^E\) further. But this hyperplane does not compare a pair of paths with common endpoints.
Thus, by \Cref{lem:markovcritpaths} being on either side of this hyperplane has no effect on the resulting maxoid.
This has the following consequence for the Gr\"obner fan of \(J_\cg\) with respect to the maxoids.

\begin{theorem}
    If the DAG from \Cref{prop:alternating-cycle-in-ideal} is an induced minor of \(\cg\), then the
    Gröbner fan of \(J_\cg\) is a strict refinement of the maxoid fan.

    \begin{proof}
        First, we restrict to the case where \(\cg\) has the induced subgraph \[
            \mathcal{H} \colon\begin{tikzcd}
                1\arrow[d]\arrow[rd] & 2\arrow[d]\arrow[ld] \\
                4 & 3\arrow[l]
            \end{tikzcd}
        \] and that \(\mathcal H\) is induced without loss of generality by \(\{1,2,3,4\}\).
        We know that \[{x_{14} - x_{13}x_{34},x_{24} - x_{23}x_{34}\in J_\cg}\] and by \Cref{prop:alternating-cycle-in-ideal}
        this also means that \(f \coloneqq x_{13}x_{24} - x_{14}x_{23}\in J_\cg\).

        Now, choose two generic weight matrices \(C',C''\in\mathbb{R}^E\) for \(\cg\) such that the the critical paths
        in \(\cg\) induced by \(C'\) and \(C''\) agree, but \(\initial(f) \neq \initial[C'](f)\). We can arrange for this
        by starting with such a generic matrix $C$ lying on the hyperplane $L$ defined by $c_{13}+c_{24} = c_{14} + c_{23}$ and setting $C' := C + \varepsilon u$ and $C'' := C - \varepsilon u$ , where $u$ is the normal vector to $L$ and $\varepsilon >0$ is small enough as to preserve the critical paths of $C$.
        By~\Cref{lem:markovcritpaths}, this means that \(\globalCsep{\cg}{C'} = \globalCsep{\cg}{C''}\),
        but \(C'\) and \(C''\) are in different full-dimensional cones of the Gr\"obner fan of \(J_\cg\). Note that $L$ is not a defining hyperplane of the maxoid fan, as it does not correspond to a pair of paths.

        If $\mathcal{H}$ is an induced minor of \(\cg\) with edge set \(E\), then there is a sequence of edge contractions
        that transforms an induced subgraph $\mathcal{H}'$ with edge set \(E'\) of \(\cg\) into $\mathcal{H}$.
        For example, if \(i\to k\to j\) is a path in $\mathcal{H}'$ and $\mathcal{H}$ is obtained by contraction of
        \(k \to j\), then \(I_{\mathcal{H}'}\) and \(I_\mathcal{H}\) differ only by the substitution of the variable \(x_{ij}\) with \(x'_{ik}x'_{kj}\)
        where \(x_{ij}\) and \(x'_{ij}\) denote the variables in \(k[E]\) respectively \(k[E']\).
    \end{proof}
\end{theorem}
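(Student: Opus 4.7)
The plan is to exhibit two generic weight matrices $C',C''\in\mathbb{R}^E$ which lie in the same cone of the maxoid fan $\cf_\cg$ but in distinct full-dimensional cones of the Gröbner fan of $J_\cg$; combined with the refinement relation established in the preceding lemma, this immediately upgrades \enquote{refinement} to \enquote{strict refinement}.

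First I would reduce to the case that $\mathcal H$ is an \emph{induced subgraph} of $\cg$ on a distinguished vertex subset, say $\{1,2,3,4\}$. By \Cref{prop:alternating-cycle-in-ideal}, the binomial $f = x_{13}x_{24} - x_{14}x_{23}$ then belongs to $J_\cg$, arising as an $S$-pair of the two generators coming from the $1\to 4$ and $2\to 4$ paths. The linear functional separating the two monomials of $f$ is $c_{13}+c_{24}-c_{14}-c_{23}$; call the corresponding hyperplane $L$. The crucial structural observation is that $L$ is \emph{not} among the defining hyperplanes of $\cf_\cg$ from \Cref{prop:CIstructurecone}: by \Cref{rmk:simplecyclessuffice}, defining hyperplanes compare weights of two directed paths with a common source and sink, while $x_{13}x_{24}$ and $x_{14}x_{23}$ do not encode a pair of paths sharing endpoints. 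Consequently, any small perturbation across $L$ preserves the critical path structure of $\cg$.

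Next I would pick a generic $C$ lying on $L$ (generic meaning all \emph{other} path-pair inequalities are strict) and set $C' := C + \varepsilon u$, $C'' := C - \varepsilon u$, where $u$ is normal to $L$ and $\varepsilon > 0$ is small enough so that $C'$ and $C''$ remain generic and have the same critical paths as $C$. By \Cref{lem:markovcritpaths} this forces $\globalCsep{\cg}{C'} = \globalCsep{\cg}{C''}$, placing $C'$ and $C''$ in the same cone of $\cf_\cg$. On the other hand, the sign of $\omega_{C'}(\cdot)$ evaluated on the exponent vectors of the two monomials of $f$ is opposite for $C'$ and $C''$, so $\initial(f) \ne \initial[C'](f)$ and the two weight matrices necessarily lie in different Gröbner cones of $J_\cg$.

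For the general case, when $\mathcal H$ is only an \emph{induced minor} of $\cg$, I would reduce to the subgraph case via edge contraction. If $\mathcal H' \subseteq \cg$ is an induced subgraph that contracts to $\mathcal H$, then a single contraction of an edge $k\to j$ along a path $i\to k\to j$ corresponds to the variable substitution $x_{ij}\mapsto x_{ik}x_{kj}$, and iterating this pulls the obstructing binomial from $J_\mathcal H$ back to an obstructing binomial in $J_\cg$. The hardest part, I expect, will be this last step: one must verify carefully that the transferred binomial still separates two monomials whose common hyperplane is not a defining hyperplane of $\cf_\cg$, since the richer path structure of $\cg$ compared to $\mathcal H$ could in principle reintroduce a common-endpoints comparison. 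Handling this cleanly requires tracking how the substitutions interact with the path polynomials $f_{ij}$ of $\cg$ along the contracted edges.
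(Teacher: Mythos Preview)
Your proposal is correct and mirrors the paper's proof almost step for step: the same binomial $f=x_{13}x_{24}-x_{14}x_{23}$, the same perturbation across the hyperplane $L$ using $C'=C+\varepsilon u$ and $C''=C-\varepsilon u$, the same appeal to \Cref{lem:markovcritpaths}, and the same reduction of the induced-minor case to the induced-subgraph case via the substitution $x_{ij}\mapsto x_{ik}x_{kj}$ along contracted edges. Your caveat about the final step is well placed; the paper's own argument is equally terse there and does not spell out the verification you flag.
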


\section{Conditional Independence Implication for Maxoids}
\label{sec:ci-axioms}

The polyhedral fan $\mathcal F_\cg$ provides the maxoids associated to a given DAG with a discrete geometric structure which is both interesting and practically useful. In this section we discuss an application to the \emph{implication problem} for conditional independence in the setting of maxoids.
A similar connection to polyhedral geometry has been previously exploited for CI~implication in the framework of \emph{structural imsets}~\cite{BHLS10}. However, the polyhedral fan in our case is specific to the graph and the map from its cones to maxoids does not in general induce a bijection, much less a Galois connection, as seen in \Cref{eg:3nodeDAG}. As a result, the extraction of conditional independence features from the polyhedral geometry is not straightforward.

\begin{definition}[CI~implication for maxoids] \label{def:CIimpl}
Let $\mathfrak m$ be a collection of maxoids together with CI~statements $p_1, \dots, p_s$ and $q_1, \dots, q_t$ over ground set~$V$. The \emph{conditional independence implication problem} for $\mathfrak m$ asks whether the implication
\[
  \bigwedge_{i=1}^s p_i \implies \bigvee_{j=1}^t q_j
\]
holds, i.e., whether every maxoid in $\mathfrak m$ containing all of the $p_i$ contains at least one of the~$q_j$. The following instances are of particular interest.
\begin{description}
\item[Local version:] The graph $\cg$ is fixed and $\mathfrak m$ is the set of all $\cg$-maxoids, i.e., maxoids of the form $\globalCsep{\cg}{C}$ for some weight matrix~$C$ compatible with~$\cg$.
\item[Local generic version:] The graph $\cg$ is fixed and $\mathfrak m$ consists of the generic $\cg$-maxoids, i.e., the $\globalCsep{\cg}{C}$ with $C \notin \mathcal{H}_\cg$.
\item[Global version:] The vertex set $V$ is fixed and $\mathfrak m$ is the collection of all $\cg$-maxoids over all DAGs~$\cg$ on~$V$.
\item[Global generic version:] The vertex set $V$ is fixed and $\mathfrak m$ is the collection of all maxoids which are generic for at least one DAG on vertex set~$V$.
\end{description}
\end{definition}

The global implication problem reduces to a series of local instances. To solve the local problem for a given graph~$\cg$, one could enumerate all $\cg$-maxoids via the cones of~$\cf_\cg$ and verify the implication case by case. However, there may be many more cones in this fan than there are maxoids satisfying the premises of the implication, so this method could be needlessly slow.
We propose a different approach which leverages the polyhedral geometry even more directly.

\begin{definition} \label{def:PolyCI}
For a given DAG~$\cg$ and a CI~statement $\CI{i,j|K}$ let $\PolyCI_\cg{i,j|K}$ be the set of all weight matrices~$C$ such that $\CI{i,j|K} \in \globalCsep{\cg}{C}$.
\end{definition}

\begin{lemma}
Let $\cg = (V, E)$ be fixed and let $i, j \in V$ distinct and $K \subseteq V \setminus ij$ be given. Then $\PolyCI_\cg{i,j|K}$ is a polyhedral set, i.e., a finite union of polyhedra.
\end{lemma}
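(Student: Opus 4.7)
The plan is to recognize $\PolyCI_\cg{i,j|K}$ as a union of cones of the maxoid fan $\cf_\cg$ from \Cref{cor:fan}, and then invoke the finiteness of that fan. Every weight matrix $C \in \mathbb{R}^E$ lies in the relative interior of a unique cone $F_C \in \cf_\cg$ (because $\cf_\cg$ is a complete fan), and by the definition of $\Phi$ in \Cref{cor:fan} together with \Cref{prop:faceunion} we have $\globalCsep{\cg}{C} = \Phi(F_C)$. Thus membership of $\CI{i,j|K}$ in $\globalCsep{\cg}{C}$ depends only on which cone of $\cf_\cg$ contains $C$ in its relative interior.

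I would then define
\[
  S \defas \{\, F \in \cf_\cg : \CI{i,j|K} \in \Phi(F) \,\}.
\]
Because $\Phi$ is inclusion-reversing by \Cref{cor:fan}, if $F \in S$ and $F'$ is a face of $F$, then $\Phi(F') \supseteq \Phi(F)$, so $F' \in S$ as well. The collection $S$ is therefore closed under taking faces.

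The main identity to establish is
\[
  \PolyCI_\cg{i,j|K} = \bigcup_{F \in S} F.
\]
For the inclusion \enquote{$\subseteq$}, if $C \in \PolyCI_\cg{i,j|K}$, then $\CI{i,j|K} \in \globalCsep{\cg}{C} = \Phi(F_C)$, so $F_C \in S$ and $C \in F_C$. For \enquote{$\supseteq$}, note that each $F \in S$ decomposes as the disjoint union of the relative interiors of its faces, all of which lie in $S$ by closure under faces; for any point $C'$ in the relative interior of a face $F' \subseteq F$, we have $\globalCsep{\cg}{C'} = \Phi(F') \ni \CI{i,j|K}$, hence $C' \in \PolyCI_\cg{i,j|K}$.

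Since $\cf_\cg$ is finite, $S$ is finite, and each cone $F \in S$ is a polyhedron, the displayed equality exhibits $\PolyCI_\cg{i,j|K}$ as a finite union of polyhedra, completing the proof. There is no genuine obstacle here; the only minor care required is separating relative interiors from closed cones when translating between the pointwise definition of $\PolyCI_\cg$ and the description via $\Phi$, which is handled cleanly by the face-closure of $S$.
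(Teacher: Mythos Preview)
Your proof is correct but takes a genuinely different route from the paper. You leverage the maxoid fan $\cf_\cg$ and the inclusion-reversing map $\Phi$ from \Cref{cor:fan}, expressing $\PolyCI_\cg{i,j|K}$ as a union of closed cones of that fan; the face-closure of $S$ is exactly what is needed to pass from relative interiors back to closed cones. The paper instead argues directly from the definition of $C^\ast$-separation: it shows that the \emph{complement} (the set of $C$ for which $i$ and $j$ are $C^\ast$-connected given~$K$) is described by an explicit Boolean combination of linear inequalities, namely the edge-existence conditions~\eqref{eq:EdgeInCrit} for $\critdag{C,K}$ assembled according to the five $\ast$-connecting path types of \Cref{fig:*-sep}, and then takes the complement within the class of polyhedral sets. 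Your argument is shorter and more structural, reusing machinery already in place; the paper's is constructive and produces the explicit inequality description that is actually needed downstream for the SMT-based implication algorithm (see \Cref{eg:Impl} and the surrounding code). Both are valid, but only the paper's version delivers the ingredients for the algorithmic application that motivates the lemma.
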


\begin{proof}
Note that polyhedral sets are closed under finite unions, intersections and set difference. Thus it suffices to show that the set of all $C$ such that $\CI[\not\Cperp]{i,j|K}$ holds is~polyhedral. This set, in turn, is the union of five sets corresponding to the five types of $\ast$-connecting paths in $\critdag{C,K}$ pictured in \Cref{fig:*-sep}. The existence of a $\ast$-connecting path of a given type depends on the existence of edges in $\critdag{C,K}$. Now let $k, l \in V$, let $P$ be the set of all paths from~$k$ to~$l$ which contain a node from~$K$ in their interior, and let $P'$ be the other paths from~$k$ to~$l$. The edge $k \to l$ exists in $\critdag{C,K}$ if and only if $P\cup P' \neq \emptyset$ and $C$ satisfies the following polyhedral conditions:
\begin{equation}
  \label{eq:EdgeInCrit}
  \bigwedge_{\pi \in P} \bigvee_{\pi' \in P'} \omega_C(\pi') > \omega_C(\pi).
\end{equation}
These conditions express that no path from $k$ to $l$ in $\cg$ which intersects~$K$ can be critical, which precisely matches \Cref{def:*-sep}. The existence of a $\ast$-connecting path is a disjunction over conjunctions of formulas of type~\eqref{eq:EdgeInCrit} which is still polyhedral.
\end{proof}

\begin{remark}
The hyperplane arrangement $\mathcal{H}_\cg$ on which the non-generic weight matrices lie is a polyhedral set as well. Hence, the generic weight matrices for $\CI{i,j|K}$ with respect to $\cg$ form the polyhedral set $\PolyCI_\cg{i,j|K} \setminus \mathcal{H}_\cg$. Thus, all versions of the CI~implication problem highlighted in \Cref{def:CIimpl} reduce to a series of feasibility tests on polyhedral sets.
\end{remark}

\begin{example} \label{eg:Impl}
Fix the complete topologically ordered DAG $\cg$ on four vertices. The~poset of $\cg$-maxoids is pictured in \Cref{fig:maxoid-poset} and \Cref{fig:hasse-diagram-maxoids} lists the maxoids explicitly. Among~these nine maxoids precisely two satisfy $[1 \indep 4 \mid 3]$ (namely \tikz[baseline]{\definecolor{maxoid_1}{RGB}{51,34,136}\node[yshift=3pt, text=white, fill=maxoid_1, draw=maxoid_1, inner sep=2pt, rectangle, rounded corners=3pt] {$1$}} and \tikz[baseline]{\colorlet{maxoid_6}{orange!65}\node[yshift=3pt, text=black, fill=maxoid_6, draw=maxoid_6, inner sep=2pt, rectangle, rounded corners=3pt] {$6$}}) and both of them also satisfy $[2 \indep 4 \mid 1,3]$. Hence, the following is a valid local CI~implication for $\cg$:
\begin{equation}
  \label{eq:Impl}
  [1 \indep 4 \mid 3] \implies [2 \indep 4 \mid 1,3].
\end{equation}
This can be proved without first enumerating all $\cg$-maxoids. Associate to each CI~statement appearing in \eqref{eq:Impl} the polyhedral set according to \Cref{def:PolyCI}:
\begin{itemize}
\item For $[1 \indep 4 \mid 3]$, consider all types of $\ast$-connecting paths between~$1$ and~$4$ in~$\critdag{C,3}$. Since $1$ has no parents and $4$ has no children, there are only two possibilities. For~the edge $1 \to 4$, there are four paths in $\cg$ from~$1$ to~$4$ and we must ensure that neither $1 \to 3 \to 4$ nor $1 \to 2 \to 3 \to 4$ is critical:
\begin{equation}
  \label{eq:P1}
  \begin{gathered}
  \varphi =
  \Big[ c_{14} > c_{13} + c_{34} \;\lor\; c_{12} + c_{24} > c_{13} + c_{34} \Big] \land {} \\
  \Big[ c_{14} > c_{12} + c_{23} + c_{34} \;\lor\; c_{12} + c_{24} > c_{12} + c_{23} + c_{34} \Big].
  \end{gathered}
\end{equation}
The other possible $\ast$-connecting paths are $4 \ot 1 \to 3 \ot 1$ and $4 \ot 2 \to 3 \ot 1$ of type \Cref{fig:*-sep:d}. Their existence is characterized by
\begin{equation}
  \label{eq:P2}
  \varphi \lor c_{24} > c_{23} + c_{34}.
\end{equation}
The set $\PolyCI_\cg{1,4|3}$ is the complement of the union of the two sets described by \eqref{eq:P1} and \eqref{eq:P2}.
\item For $[2 \indep 4 \mid 1,3]$ the only possible $C^\ast$-connecting path is the direct edge $2 \to 4$ in $\critdag{C,13}$. This edge exists if and only if $c_{24} > c_{23} + c_{34}$.
\end{itemize}
The implication \eqref{eq:Impl} holds if and only if the polyhedral set $\PolyCI_\cg{1,4|3} \setminus \PolyCI_\cg{2,4|1,3}$ is empty. To solve this feasibility problem we apply state-of-the-art \emph{satisfiability modulo theories (SMT)} solvers with support for linear arithmetic over the ordered field of the rationals \cite{Satisfiability.jl,Z3}:
\begin{minted}{julia-repl}
julia> using Satisfiability;
julia> @satvariable(C[1:4, 1:4], Real);
julia> P143 = not(or(and(
         or(C[1,4] > C[1,3]+C[3,4], C[1,2]+C[2,4] > C[1,3]+C[3,4]),
         or(C[1,4] > C[1,2]+C[2,3]+C[3,4], C[1,2]+C[2,4] > C[1,2]+C[2,3]+C[3,4])
       ),   C[2,4] > C[2,3]+C[3,4]));
julia> P2413 = not(C[2,4] > C[2,3]+C[3,4]);
julia> counterexample = and(P143, not(P2413));
julia> sat!(counterexample)
:UNSAT
\end{minted}
The output of the \mintinline{julia}{sat!} call is \mintinline{julia}{:UNSAT} meaning that the queried formula is not satisfiable. This formula describes the matrices~$C$ for which $[1 \indep 4 \mid 3]$ holds but $[2 \indep 4 \mid 1,3]$ does not. From the non-existence of such a counterexample we can conclude that~\eqref{eq:Impl} is true for all $\cg$-maxoids. In fact, in this case the implication rewrites to $\neg(\varphi \lor \psi) \implies \neg\psi$, for $\varphi$ given by \eqref{eq:P1} and $\psi = c_{24} > c_{23} + c_{34}$ as in \eqref{eq:P2}. This is of course a logical tautology.
\end{example}

Our package \verb|Maxoids.jl| automates the derivation of the polyhedral sets~$\PolyCI_\cg{i,j|K}$ and includes a convenient interface for solving the local, generic and global CI~implication problems. In case a counterexample is determined to exist, the SMT solver also returns~it. For example, the reverse implication to~\eqref{eq:Impl} is not true. We can verify this and even obtain a certificate in the form of a weight matrix $C$ such that $(\cg, C)$ satisfies $[2 \indep 4 \mid 1,3]$ but not $[1 \indep 4 \mid 3]$:
\begin{minted}{julia-repl}
julia> using Maxoids, Oscar;
julia> G = complete_DAG(4);
julia> maxoid_implication(G, [ CI"24|13" ] => [ CI"14|3" ])
(false, [-Inf 1/4 0 1/2; -Inf -Inf 0 0; -Inf -Inf -Inf 0; -Inf -Inf -Inf -Inf])
\end{minted}
Note that the implication \eqref{eq:Impl} is specific to the maxoids realized by the complete graph~$\cg$. In a graph where $1$ is an isolated vertex, say, it need not hold. Finding such an example is a simple matter of applying the SMT solver in ``global mode'' where the returned certificate consists of a DAG $\cg$ supporting a counterexample as well as the weights:
\begin{minted}{julia-repl}
julia> maxoid_implication(4, [ CI"14|3" ] => [ CI"24|13" ])
(false, [2 => 4], [-Inf -Inf -Inf -Inf; -Inf -Inf -Inf 0; -Inf -Inf -Inf -Inf; …])
\end{minted}

We now consider properties that all maxoids, independently of the graphs they are realized by, have in common. Like many other types of graphical models (cf.~\cite{UnifyingMarkov}), maxoids satisfy the \emph{compositional graphoid properties}, i.e., every maxoid is closed under the following equivalence and implications for all disjoint $I,J,K,L \subseteq V$:
\begin{alignat}{3}
    \textbf{Semigraphoid:} &\quad \CI{I,J|L} \land \CI{I,K|JL} &&\iff\; \CI{I,JK|L}, \\
    \textbf{Intersection:} &\quad \CI{I,J|KL} \land \CI{I,K|JL} &&\;\implies\; \CI{I,JK|L}, \\
    \textbf{Composition:}  &\quad \CI{I,J|L} \land \CI{I,K|L} &&\;\implies\; \CI{I,JK|L}.
\end{alignat}
Whereas the Semigraphoid property holds for the CI~statements satisfied by any random vector, Intersection and Composition provide non-trivial additional structure~\cite{Compo}. For~example, Intersection guarantees the uniqueness of Markov boundaries \cite{PearlPaz} and Composition ensures the correctness of the IAMB algorithm to find them \cite{MarkovBoundary}.
Compositional graphoids also admit a significant complexity reduction in handling CI~statements, namely every compositional graphoid satisfies the equivalence
\[
  \CI{I,J|K} \iff \bigwedge_{i \in I} \bigwedge_{j \in J} \CI{i,j|K},
\]
for all disjoint $I, J, K \subseteq V$. Hence, the independence of $I$ and $J$ depends only on \emph{pairwise} interactions conditional on~$K$.

In~\cite{MaxLinearCI} it is mentioned without proof that $C^\ast$-separation satisfies the compositional graphoid properties. We~supply the routine proof below and then delve into the question of what distinguishes maxoids from other types of compositional graphoids.

\begin{proposition}
\label{prop:maxoids-are-comp-graphoids}
Maxoids are compositional graphoids.
\end{proposition}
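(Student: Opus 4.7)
The strategy is to exploit the fact that $C^\ast$-separation is defined in \Cref{def:*-sep} only between pairs of single nodes, so its extension to sets is by the conjunction
\[
  \CI[\Cperp]{I,J|L} \iff \bigwedge_{i \in I,\, j \in J} \CI[\Cperp]{i,j|L}.
\]
This pairwise characterization immediately takes care of symmetry (reverse the $\ast$-connecting path), decomposition (restrict to a sub-collection of pairs), and composition (take the union of two such collections), because for these three axioms the conditioning set is unchanged between the hypotheses and the conclusion.

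For the remaining non-trivial axioms -- contraction, weak union, and intersection -- the conditioning sets do change, and the central combinatorial observation is the monotonicity \(E(\critdag{C,L'}) \subseteq E(\critdag{C,L})\) whenever $L \subseteq L'$. This follows directly from \Cref{def:*-sep}, since ``no critical path intersects $L'$'' is strictly stronger than the analogous condition for $L$. The $\ast$-connecting path shapes in \Cref{fig:*-sep} can therefore be transported between critical DAGs in a controlled way.

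For weak union $\CI{I,JK|L} \Rightarrow \CI{I,K|JL}$, I would argue by contradiction. Suppose some $i \in I$ and $k \in K$ admit a $\ast$-connecting path $\pi$ in $\critdag{C,JL}$. By the monotonicity above, every edge of $\pi$ is already present in $\critdag{C,L}$, and a case split on the unique collider $\ell$ of $\pi$ (if any) finishes the argument: if $\ell \in L$ then $\pi$ itself $\ast$-connects $i$ and $k \in JK$ given $L$; if $\ell \in J$ then the $i$--$\ell$ subpath of $\pi$ has shape \Cref{fig:*-sep:a} or \Cref{fig:*-sep:b} in $\critdag{C,L}$ and $\ast$-connects $i$ to $\ell \in JK$ given $L$. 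Both outcomes contradict $\CI{I,JK|L}$.

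Contraction and intersection require the opposite deformation: an alleged $\ast$-connection between $i$ and some element of the enlarged conclusion set given $L$ must be upgraded either to a $\ast$-connection in a critical DAG with more conditioning or to one whose endpoint has been re-targeted into $J$ (respectively $K$), so as to contradict one of the two hypotheses. The supporting lemma is that every prefix of a critical $\cg$-path is itself critical: whenever a critical path underlying an edge of the hypothetical $\pi$ passes through an interior node $j \in J$, the prefix up to $j$ is a critical path with interior disjoint from the old conditioning set, hence witnesses a new edge into $j$ in that same critical DAG, which can be spliced into $\pi$. I expect the main obstacle here to be bookkeeping: the five path shapes of \Cref{fig:*-sep} combine with sub-cases for where and on which side of the collider the offending $J$-node appears, but organizing the analysis by first handling direct-edge and common-cause sub-paths and then reassembling them into the collider-containing shapes should keep it routine.
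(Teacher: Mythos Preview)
Your approach is correct and essentially the same as the paper's: both argue by contraposition on $\ast$-connecting paths, your edge-monotonicity observation is the paper's \Cref{lemma:CritMonotone}, and your prefix-of-critical-path lemma is the paper's \Cref{lemma:SplitCritical} (both stated slightly later in the section). Your pairwise reduction up front and your explicit isolation of these two lemmas give a somewhat cleaner organization than the paper's direct set-level argument---in particular, the paper's contraction and intersection paragraphs pass silently over the point that edges of $\pi$ must persist in the larger critical DAG, which is precisely where your prefix lemma does the work---but the substance is the same.
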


\begin{proof}
Consider any maxoid $\mathcal{M} = \globalCsep{\cg}{C}$ for a given DAG $\cg$ and weight matrix~$C$ supported on~$\cg$. All~separation statements below are with respect to~$(\cg, C)$.
By~the definition of $C^\ast$-separation, the assumption $\CI[\not\Cperp]{I,J|L}$ implies the existence of a $\ast$-connecting path $\pi$ between $I$ and $J$ in the critical DAG~$\critdag{C,L}$. A~fortiori, $\pi$ also connects $I$ and $KL$ in $\critdag{C,L}$, hence $\CI[\not\Cperp]{I,JK|L}$.
Now consider a $\ast$-connecting path $\pi$ between $I$ and $K$ in $\critdag{C,JL}$. If it contains a collider $j \in J$, then the portion of $\pi$ from $I$ to $j$ is a $\ast$-connecting path between $I$ and $J$ in~$\critdag{C,L}$. Other\-wise the collider (if any) is in $L$ and $\pi$ yields a $\ast$-connecting between $I$ and $K$ in $\critdag{C,L}$. In both cases, we obtain a $\ast$-connecting path between $I$ and $JK$ in~$\critdag{C,L}$.
By~contra\-position, these two arguments prove the ``only if'' part of the Semigraphoid property.

The ``if'' direction is proved by contraposition as well. Assume that $\CI[\not\Cperp]{I,JK|L}$ and $\CI[\Cperp]{I,J|L}$, i.e., there exists a $\ast$-connecting path $\pi$ from $I$ to $JK$ but not one from $I$ to $J$ in~$\critdag{C,L}$. Hence, $\pi$ must connect $I$ and $K$ and cannot contain any node from~$J$. But then $\pi$ also $\ast$-connects $I$ and $K$ in~$\critdag{C,JL}$, thus $\CI[\not\Cperp]{I,K|JL}$ holds.

For Intersection, use again contraposition. Assume $\CI[\not\Cperp]{I,JK|L}$ and $\CI[\Cperp]{I,J|KL}$. By the Semigraphoid property and the symmetry with respect to exchanging $J$ and $K$, we can split $\CI[\not\Cperp]{I,JK|L}$ into two cases: $\CI[\not\Cperp]{I,K|L}$ or $\CI[\not\Cperp]{I,J|KL}$. The second case contradicts our other assumption.
In the former case, let $\pi$ denote a $\ast$-connecting path between $I$ and $K$ in~$\critdag{C,L}$. We may assume that this path is as short as possible, i.e., does not contain any other node of~$K$. If it contains a node $j\in J$, then the portion from $I$ to $j$ $\ast$-connects $I$ and $J$ in~$\critdag{C,KL}$ which is impossible. Hence $\pi$ is free of nodes from $J$ and thus $\ast$-connects $I$ and $K$ also in~$\critdag{C,JL}$ which is the required conclusion of Intersection.

The Composition property holds almost by definition. Any $\ast$-connecting path from $I$ to $JK$ in~$\critdag{C,L}$ connects either $I$ to $J$ or $I$ to $K$, which is the contrapositive of the assertion of Composition.
\end{proof}

Additionally, maxoids satisfy the following Amalgamation property for blocking sets:
\begin{alignat}{3}
    \textbf{Amalgamation:} &\quad \CI{i,j|KM} \land \CI{i,j|LM} &&\implies \CI{i,j|KLM},
\end{alignat}
for all distinct $i, j \in V$ and pairwise disjoint sets $K, L, M \subseteq V \setminus ij$. This property reflects the graphical definition of ``blocked critical paths'' that underlies $C^\ast$-separation. Notably,~its proof relies on the fact that $\ast$-connecting paths can have at most one collider. In~the Bayesian network for the ``Cassiopeia graph'' $1 \to 2 \ot 3 \to 4 \ot 5$ we have $\CI[\dperp]{1,5|2}$ and $\CI[\dperp]{1,5|4}$ but $\CI[\not\dperp]{1,5|2,4}$, so Amalgamation fails.

\begin{proposition} \label{prop:Amalgam}
Maxoids satisfy the Amalgamation property.
\end{proposition}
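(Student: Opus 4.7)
My plan is to prove the contrapositive: assume $\CI[\not\Cperp]{i,j|KLM}$ and deduce that $\CI[\not\Cperp]{i,j|KM}$ or $\CI[\not\Cperp]{i,j|LM}$. By hypothesis there is a $\ast$-connecting path $\pi$ from $i$ to $j$ in the critical DAG $\critdag{C,KLM}$, and by \Cref{def:*-sep} this $\pi$ is of one of the five shapes in \Cref{fig:*-sep}, and in particular has \emph{at most one} collider.

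The first step is to observe that the edge sets of critical DAGs are monotone in the conditioning set in the following sense: if $S' \subseteq S$, then $E(\critdag{C,S}) \subseteq E(\critdag{C,S'})$, because ``no critical path intersects $S$'' implies ``no critical path intersects $S' \subseteq S$''. Since $KM$ and $LM$ are both subsets of $KLM$, every edge of $\pi$ continues to exist in both $\critdag{C,KM}$ and $\critdag{C,LM}$. What remains is to check the role/conditioning conditions on the internal nodes of $\pi$.

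Next I would split into cases by the shape of $\pi$. For shapes \Cref{fig:*-sep:a} and \Cref{fig:*-sep:b}, $\pi$ has no collider and any non-collider $p$ satisfies $p \notin KLM$, hence $p \notin KM$ and $p \notin LM$; so $\pi$ witnesses $\ast$-connection already in, say, $\critdag{C,KM}$. For shapes \Cref{fig:*-sep:c}, \Cref{fig:*-sep:d} and \Cref{fig:*-sep:e}, $\pi$ has a unique collider $\ell \in KLM$, so $\ell$ lies in at least one of $K$, $L$, or $M$. If $\ell \in K \cup M$ pick the conditioning set $KM$; if $\ell \in L$ pick $LM$. In either case, because $K$, $L$, and $M$ are pairwise disjoint, the collider lies in the chosen set while the non-colliders of $\pi$ remain outside it (as they were outside of $KLM$). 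Thus $\pi$ is $\ast$-connecting in the corresponding smaller critical DAG, yielding the contrapositive.

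I do not expect genuine obstacles here; the argument is essentially an unfolding of the definition of $C^\ast$-separation, leveraging the structural fact emphasized after \Cref{def:*-sep} that a $\ast$-connecting path contains at most one collider. The only subtlety worth stating explicitly is the monotonicity of edges of $\critdag{C,\,\cdot\,)$ under shrinking the conditioning set, which is what guarantees that the \emph{same} path $\pi$ can be reused as a witness rather than having to construct a new one.
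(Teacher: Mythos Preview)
Your proposal is correct and follows essentially the same route as the paper's proof: argue by contraposition, take a $\ast$-connecting path $\pi$ in $\critdag{C,KLM}$, use the monotonicity $E(\critdag{C,S}) \subseteq E(\critdag{C,S'})$ for $S' \subseteq S$ (which the paper records separately as \Cref{lemma:CritMonotone}) to keep all edges of $\pi$ in both $\critdag{C,KM}$ and $\critdag{C,LM}$, and then observe that the at most one collider of $\pi$ lies in $KM$ or in $LM$ while the non-colliders lie outside both. The paper compresses your case split over the five shapes into the single collider/non-collider distinction, but the content is identical.
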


We first establish a simple monotonicity property for edge sets in critical~DAGs.

\begin{lemma} \label{lemma:CritMonotone}
Let $(\cg, C)$ be a weighted DAG. For $M' \subseteq M$ we have $E(\critdag{C,M}) \subseteq E(\critdag{C,M'})$.
\end{lemma}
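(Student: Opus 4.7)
The plan is to unfold the definition of the critical DAG from \Cref{def:*-sep} and use a single contrapositive observation about set inclusion. Recall that $i \to j \in E(\critdag{C,L})$ precisely when (a) $i$ and $j$ are connected by some directed path in $\cg$, and (b) no critical $i$-$j$ path in $\cg$ intersects $L$. Condition (a) is independent of the conditioning set, so only (b) plays a role.

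Concretely, I would take an arbitrary edge $i \to j \in E(\critdag{C,M})$ and show it lies in $E(\critdag{C,M'})$. Condition (a) carries over verbatim since $\cg$ and $C$ are fixed. For condition (b), the argument is: if some critical $i$-$j$ path $\pi$ in $(\cg,C)$ contained a node of $M'$, then since $M' \subseteq M$ that same node would lie in $M$, contradicting the assumption that $i \to j \in E(\critdag{C,M})$. Hence no critical $i$-$j$ path intersects $M'$, and $i \to j \in E(\critdag{C,M'})$.

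There is no real obstacle here; the statement reduces to the observation that a smaller conditioning set can only block fewer critical paths, hence admits at least as many edges in the critical DAG. The only thing worth emphasising is that the set of critical $i$-$j$ paths depends solely on $(\cg,C)$ and not on the conditioning set, so shrinking $M$ to $M'$ cannot introduce new critical paths that might fail to meet~$M'$; it can only remove intersection points from critical paths that previously met~$M$.
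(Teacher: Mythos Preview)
Your proposal is correct and matches the paper's own proof essentially verbatim: both unfold \Cref{def:*-sep}, note that the existence of a directed $i$--$j$ path is independent of the conditioning set, and observe that if no critical path meets $M$ then none meets the subset $M'$. There is nothing to add.
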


\begin{proof}
Let $i \to j$ be an edge in $\critdag{C,M}$. Recall from \Cref{def:*-sep} that there is a path from $i$ to $j$ in $\cg$ and no critical path between $i$ and $j$ in $\cg$ intersects~$M$. But then no critical path intersects the subset~$M'$ and so $i \to j$ is an edge in $\critdag{C,M'}$ as well.
\end{proof}

\begin{proof}[Proof of \Cref{prop:Amalgam}]
Fix any $(\cg, C)$ and consider its maxoid. Arguing by contraposition, we assume $\CI[\not\Cperp]{i,j|KLM}$ and take a $\ast$-connecting path $\pi$. By \Cref{lemma:CritMonotone} all edges on $\pi$ also exist in both critical DAGs~$\critdag{C,KM}$ and~$\critdag{C,LM}$. If $\pi$ involves parents $p$ and/or $q$ (cf.~\Cref{fig:*-sep}) then these nodes are outside of $KLM$ and hence outside of~$KM$ and of~$LM$. If~$\pi$ involves a collider $\ell$ then it must be inside $KLM$ and hence in~$KM$ or in~$LM$. Thus,~$\pi$ also $\ast$-connects $i$ and $j$ in $\critdag{C,KM}$ or in $\critdag{C,LM}$ which was the claim.
\end{proof}

Spohn in \cite[Eq.~(S5)]{SpohnProperties} found an implication which is valid for random vectors with strictly positive density. Maxoids satisfy a stronger version of this implication.
\begin{alignat}{3}
    \label{eq:Spohn1}
    \textbf{Strong Spohn:} &\; \CI{i,j|klM} \land \CI{k,l|iM} \land \CI{k,l|jM} &&\implies \CI{k,l|M}, \\ %
    \label{eq:Spohn2}
                    &\quad \CI{i,j|klM} \land \CI{k,l|iM} \land \CI{k,l|M} &&\implies \CI{k,l|jM},
\end{alignat}
for all distinct $i, j, k, l \in V$ and $M \subseteq V \setminus ijkl$.

\begin{remark}
The original Spohn property also includes $\CI{k,l|ijM}$ in the premises of both implications. For implication~\eqref{eq:Spohn1} this additional premise is redundant in the maxoid setting by virtue of \Cref{prop:Amalgam}. By contrast, no strict subset of the premises of~\eqref{eq:Spohn2} implies $\CI{k,l|ijM}$. Counterexamples can be found using an SMT solver:
\begin{minted}{julia-repl}
julia> maxoid_implication(4, [ CI"14|23", CI"23|1" ] => [ CI"23|4" ])
(false, [1 => 2, 1 => 3], [-Inf 0 0 -Inf; -Inf -Inf -Inf -Inf; -Inf -Inf -Inf -Inf; …])
julia> maxoid_implication(4, [ CI"14|23", CI"23|" ] => [ CI"23|4" ])
(false, [2 => 4, 3 => 4], [-Inf -Inf -Inf -Inf; -Inf -Inf -Inf 0; -Inf -Inf -Inf 0; …])
julia> maxoid_implication(4, [ CI"23|", CI"23|1" ] => [ CI"23|4" ])
(false, [2 => 4, 3 => 4], [-Inf -Inf -Inf -Inf; -Inf -Inf -Inf 0; -Inf -Inf -Inf 0; …])
\end{minted}
Hence, it is remarkable that \eqref{eq:Spohn2} holds. The Strong Spohn property, just like the original one, has an overlap in the premises between the two implications. To highlight this, the Strong Spohn property can be restated as: given that $\CI{i,j|klM}$ and $\CI{k,l|iM}$ hold, the two statements $\CI{k,l|M}$ and $\CI{k,l|jM}$ are equivalent; see also \cite[Lemma~24]{SimecekDiss}.
\end{remark}

\begin{lemma} \label{lemma:SplitCritical}
Let $(\cg, C)$ be a weighted DAG and suppose that $i \to k \in E(\critdag{C,M})$. If $\pi$ is a critical path from $i$ to $k$ in $(\cg, C)$ containing a node $j$ distinct from $i$ and $k$, then $i \to j$ and $j \to k$ also exist in~$\critdag{C,M}$.
\end{lemma}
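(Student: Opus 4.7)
The plan is to exploit the obvious decomposition of $\pi$ at the node $j$ and combine it with a cut-and-paste argument on critical paths. Write $\pi = \pi_1 \ast \pi_2$ where $\pi_1$ is the initial segment of $\pi$ from $i$ to $j$ and $\pi_2$ is the terminal segment from $j$ to $k$. By symmetry, it suffices to establish that $i \to j \in E(\critdag{C,M})$; the argument for $j \to k$ is identical.

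First I would verify that $\pi_1$ is itself a critical $i$-$j$ path in $(\cg, C)$. If some $i$-$j$ path $\pi_1'$ in $\cg$ had weight strictly greater than $\omega_C(\pi_1)$, then $\pi_1' \ast \pi_2$ would be a directed walk from $i$ to $k$ of weight strictly greater than $\omega_C(\pi)$. Since $\cg$ is a DAG, this walk cannot revisit any vertex: any common node $v \neq j$ shared by $\pi_1'$ and $\pi_2$ would lie both before $j$ along $\pi_1'$ and after $j$ along $\pi_2$, yielding a directed cycle through $v$ and $j$. Hence $\pi_1' \ast \pi_2$ is a genuine path and its existence contradicts the criticality of $\pi$.

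Next I would argue that no critical $i$-$j$ path in $(\cg, C)$ has an internal node in $M$. Suppose for contradiction that $\pi_1^\ast$ is such a path, with some internal vertex $m \in M$. By the same DAG argument as above, $\pi_1^\ast \ast \pi_2$ is a well-defined directed path from $i$ to $k$, and its weight equals $\omega_C(\pi_1^\ast) + \omega_C(\pi_2) = \omega_C(\pi_1) + \omega_C(\pi_2) = \omega_C(\pi)$ since both $\pi_1^\ast$ and $\pi_1$ are critical between $i$ and $j$. Therefore $\pi_1^\ast \ast \pi_2$ is a critical $i$-$k$ path containing the node $m \in M$, contradicting the hypothesis $i \to k \in E(\critdag{C,M})$. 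Since $\pi_1$ witnesses that $i$ and $j$ are path-connected in $\cg$, \Cref{def:*-sep} yields $i \to j \in E(\critdag{C,M})$, as required.

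The only mildly delicate point is the claim that concatenating two directed paths at their common endpoint $j$ yields a simple path in a DAG; everything else is bookkeeping. I would highlight this once and then invoke the symmetric argument for $j \to k$.
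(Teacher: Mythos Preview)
Your proposal is correct and follows essentially the same cut-and-paste argument as the paper: split $\pi$ at $j$, show each segment is critical by a replacement argument, and then observe that any critical $i$--$j$ path concatenated with the $j$--$k$ segment yields a critical $i$--$k$ path, which by hypothesis avoids~$M$. The only difference is that you are more explicit than the paper about why the concatenation $\pi_1' \ast \pi_2$ is a genuine simple path in a DAG; the paper's proof silently forms ``the path $\pi'$'' without comment.
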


\begin{proof}
Split $\pi$ into two portions: $\pi_0$ from $i$ to $j$ and $\pi_1$ from $j$ to $k$. For any path $\pi_0'$ from $i$ to $j$ in $\cg$, we can form the path $\pi'$ from $i$ to $k$ by replacing $\pi_0$ in $\pi$ by $\pi_0'$. Since $\pi$ is a critical path, we have $\omega_C(\pi_0) + \omega_C(\pi_1) = \omega_C(\pi) \ge \omega_C(\pi') = \omega_C(\pi_0') + \omega_C(\pi_1)$, and it follows that $\omega_C(\pi_0) \ge \omega_C(\pi_0')$. Thus $\pi_0$ is critical. If $\pi_0'$ is also critical, then $\pi'$ is a critical path from $i$ to $k$. By assumption $\pi'$ and hence $\pi_0'$ do not contain any node from~$M$. This shows $i \to j \in E(\critdag{C,M})$. The same argument works when $\pi_1$ is replaced by any critical path $\pi_1'$ between $j$ and $k$.
\end{proof}

\begin{proposition}
Maxoids satisfy the Strong Spohn property.
\end{proposition}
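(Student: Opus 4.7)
The plan is to prove both implications by contraposition, following the mold of the proofs of \Cref{prop:maxoids-are-comp-graphoids} and \Cref{prop:Amalgam}, using the shape classification of $\ast$-connecting paths in \Cref{fig:*-sep} together with \Cref{lemma:CritMonotone} and \Cref{lemma:SplitCritical} as the chief tools. Fix a weighted DAG $(\cg, C)$; all separation statements below refer to $(\cg, C)$.

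For \eqref{eq:Spohn1}, assume $\CI[\not\Cperp]{k,l|M}$ with witnessing $\ast$-connecting path $\pi$ in $\critdag{C,M}$, and aim to show that one of $\CI[\not\Cperp]{i,j|klM}$, $\CI[\not\Cperp]{k,l|iM}$, $\CI[\not\Cperp]{k,l|jM}$ must hold. The path $\pi$ can fail to $\ast$-connect $k$ and $l$ in $\critdag{C,iM}$ only if either $i$ is a non-collider internal node of $\pi$, or some edge $u \to v$ of $\pi$ is absent from $\critdag{C,iM}$; in the latter case, \Cref{lemma:SplitCritical} applied to a critical $u$--$v$ path through $i$ yields edges $u \to i$ and $i \to v$ in $\critdag{C,M}$. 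Run the symmetric analysis for $j$. If $\pi$ survives in either $\critdag{C,iM}$ or $\critdag{C,jM}$, the corresponding premise of \eqref{eq:Spohn1} fails; otherwise both $i$ and $j$ are ``pinned'' to $\pi$ either as non-colliders directly or via the edges supplied by \Cref{lemma:SplitCritical}. Depending on the shape of $\pi$, these ingredients assemble into a $\ast$-connecting path between $i$ and $j$ in $\critdag{C,klM}$ of shape \Cref{fig:*-sep:a}, \Cref{fig:*-sep:b}, or \Cref{fig:*-sep:c}, whose collider (when present) is taken to be the collider of $\pi$ or one of $k, l$, each lying in $klM$.

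For \eqref{eq:Spohn2}, assume $\CI[\not\Cperp]{k,l|jM}$ with witnessing $\ast$-connecting path $\rho$ in $\critdag{C,jM}$. The non-colliders of $\rho$ lie outside $jM$, and by \Cref{lemma:CritMonotone} all edges of $\rho$ persist in $\critdag{C,M}$, so the only obstruction to $\rho$ $\ast$-connecting $k$ and $l$ in $\critdag{C,M}$ is that its unique collider equals $j$. If no such obstruction occurs, $\CI[\not\Cperp]{k,l|M}$ holds and we are done. Otherwise $\rho$ has the shape $k \cdots \to j \leftarrow \cdots l$ with $j$ as collider, and the same $i$-insertion analysis as above yields either that $\rho$ $\ast$-connects $k$ and $l$ in $\critdag{C,iM}$ (giving $\CI[\not\Cperp]{k,l|iM}$), or that edges from \Cref{lemma:SplitCritical} embed $i$ onto one of the two directed sub-paths of $\rho$ meeting at $j$; splicing with the other sub-path then produces a $\ast$-connecting path between $i$ and $j$ in $\critdag{C,klM}$ of shape \Cref{fig:*-sep:a} or \Cref{fig:*-sep:b}.

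The main obstacle in both implications is verifying that every edge of the reconstructed path persists when the conditioning set is enlarged by $\{k, l\}$ and that the resulting walk matches one of the shapes in \Cref{fig:*-sep}. The decisive combinatorial fact is the acyclicity of $\critdag{C,M}$: any putative critical path forcing a new edge $i \to k$ (resp. $i \to l$) in $\critdag{C,M}$ would contradict the companion edge $k \to i$ (resp. $l \to i$) already extracted from $\pi$ via \Cref{lemma:SplitCritical}. By choosing the witness $\ast$-connecting paths to be minimal, the interaction of $i$ and $j$ with $\pi$ and $\rho$ is confined to a bounded number of positions dictated by \Cref{fig:*-sep}, so the resulting case analysis, while delicate, terminates.
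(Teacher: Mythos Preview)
Your overall architecture matches the paper's: prove each implication by contraposition, take a witnessing $\ast$-connecting path, analyse how it fails after adding $i$ (resp.\ $j$) to the conditioning set, and use \Cref{lemma:SplitCritical} to insert $i$ and $j$ onto the path. The shape of the argument for \eqref{eq:Spohn2} is also right: the only way $\rho$ can fail in $\critdag{C,M}$ is that its unique collider equals~$j$.

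The gap is in what you identify as the ``decisive combinatorial fact''. Acyclicity of $\critdag{C,M}$ does not suffice to show that the reconstructed edges persist in $\critdag{C,klM}$. Take $\pi = k \leftarrow p \to l$ of type~(b) with $i$ lying on a critical $p$--$k$ path; \Cref{lemma:SplitCritical} (applied with $M$) gives $p \to i$ and $i \to k$ in $\critdag{C,M}$. To get $p \to i \in \critdag{C,klM}$ you must exclude a critical $p$--$i$ path through~$l$. Your acyclicity argument does not apply: there is no companion edge $l \to i$ or $i \to l$ to contradict, since $i$ was extracted from the $p \to k$ side of~$\pi$, not the $p \to l$ side.

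The paper closes this gap differently, and the order of operations matters. First it uses minimality of $\pi$ to guarantee that no critical path underlying any edge of~$\pi$ contains $k$ or $l$ as an \emph{interior} vertex (otherwise \Cref{lemma:SplitCritical} produces a strictly shorter $\ast$-connecting path between $k$ and~$l$). This ensures every edge of $\pi$ already lies in $\critdag{C,klM}$. Only then does it invoke \Cref{lemma:SplitCritical} --- with conditioning set $klM$, not $M$ --- to obtain edges such as $p \to i$ and $i \to y$ directly in $\critdag{C,klM}$. In the scenario above this works because any critical $p$--$i$ path extends (by the proof of \Cref{lemma:SplitCritical}) to a critical $p$--$k$ path, which by the minimality assumption avoids~$l$. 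Your sketch mentions minimality only to bound the case analysis; the point is that it is what actually forces persistence, and \Cref{lemma:SplitCritical} must be applied with the enlarged set~$klM$.
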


\begin{proof}
Fix $(\cg, C)$. For the first implication we prove the equivalent statement
\[
  \CI[\not\Cperp]{k,l|M} \land \CI[\Cperp]{k,l|iM} \land \CI[\Cperp]{k,l|jM} \implies \CI[\not\Cperp]{i,j|klM}.
\]
Let $\pi$ be a $\ast$-connecting path between $k$ and $l$ in $\critdag{C,M}$. Thus $\pi$ consists of up to four edges among up to five vertices. Each of these edges $x \to y$ witnesses that in $\cg$ there is a directed path from~$x$ to~$y$ and that no critical path between them intersects~$M$. We may additionally assume that none of these critical paths contain $k$ or $l$ as interior vertices. Otherwise \Cref{lemma:SplitCritical} guarantees that we can choose a shorter $\ast$-connecting path $\pi$ between $k$ and $l$ in $\critdag{C,M}$.

Since $\CI[\Cperp]{k,l|iM}$ holds, $\pi$ cannot be $\ast$-connecting in $\critdag{C,iM}$. If $\pi$ contains a collider $m \in M$, then this collider is also in~$iM$, so the reason why $\pi$ no longer connects $k$ and $l$ in $\critdag{C,iM}$ must be that one of the edges in $\pi$ no longer exists in $\critdag{C,iM}$ because a corresponding critical path in $\cg$ contains~$i$. (This~includes the special cases when $\pi$ includes parents $p$ and/or $q$ and $p = i$ or $q = i$; cf.~\Cref{fig:*-sep}.) Similar reasoning applies to~$\critdag{C,jM}$.
We~distinguish two cases:
\begin{paraenum}[label=(\alph*)]
\item Suppose that the same edge, say $x \to y$, of $\pi$ is missing in both $\critdag{C,iM}$ and $\critdag{C,jM}$. Note that $y \in klM$. We have already argued in the beginning that $k$ and $l$ do not appear on any critical path from~$x$ to~$y$ in~$\cg$. Thus, $x \to y$ is an edge in $\critdag{C,klM}$. Since this edge is missing in $\critdag{C,iM}$ and $\critdag{C,jM}$ there exist a critical path from $x$ to $y$ in $\cg$ containing $i$ and a critical path from $x$ to $y$ containing $j$, both containing no internal vertex from~$klM$. \Cref{lemma:SplitCritical} shows that also $i \to y \ot j$ exists in $\critdag{C,klM}$ and hence $\CI[\not\Cperp]{i,j|klM}$.

\item Otherwise $\pi$ is missing one edge in $\critdag{C,iM}$ and another in $\critdag{C,jM}$. Thus, there are critical paths corresponding to these edges which contain $i$ and $j$, respectively. These paths do not contain $k$ or $l$ and thus \Cref{lemma:SplitCritical} ensures that a $\ast$-connecting path between~$i$ and~$j$ exists in $\critdag{C,klM}$ which proves $\CI[\not\Cperp]{i,j|klM}$.
\end{paraenum}

The second implication is proved similarly, again in rearranged form:
\[
  \CI[\not\Cperp]{k,l|jM} \land \CI[\Cperp]{k,l|M} \land \CI[\Cperp]{k,l|iM} \implies \CI[\not\Cperp]{i,j|klM}.
\]
Let $\pi$ be a $\ast$-connecting path between $k$ and $l$ in $\critdag{C,jM}$. As before we may assume that no critical path associated to any of the edges in~$\pi$ contains $k$ or $l$, as otherwise we may shorten~$\pi$. By \Cref{lemma:CritMonotone} the edges on $\pi$ also exist in $\critdag{C,M}$ but the separation assumptions imply that $\pi$ does not $\ast$-connect. This is only possible if $\pi$ contains $j \notin M$ as a collider. The separation $\CI[\Cperp]{k,l|iM}$ implies that $\pi$ does not exist in $\critdag{C,iM}$. Hence, $i$ appears on some critical path for an edge in~$\pi$ (or equals one of the parents $p$ and/or $q$). As before, \Cref{lemma:SplitCritical} then gives a $\ast$-connecting subpath between~$i$ and~$j$ in~$\critdag{C,klM}$.
\end{proof}

We end this section with a discussion of alternate representations of maxoids. Outside of graphical models, abstract properties of conditional independence are well-studied only for discrete and regular Gaussian random variables; see~\cite{StudenyIngleton,Dissert}. The parametrization of MLBNs in \eqref{eqn:mlbn} does not produce jointly Gaussian distributions as the maximum of Gaussians does not follow a Gaussian distribution. On the other hand, discrete distributions are not atom-free and are thus incompatible with this parametrization. Nevertheless, it is reasonable to ask whether maxoids, as abstract conditional independence models, can be represented using one of these two distribution classes.
For Gaussian distributions we can present an easy obstruction. Drton and Xiao~\cite{DrtonXiao} coined the term \emph{semigaussoid} as a synonym for ``compositional graphoid''. What is missing from a semigaussoid to a \emph{gaussoid} is the closedness under %
\begin{alignat}{3}
    \textbf{Weak Transitivity:} &\quad \CI{i,j|L} \land \CI{i,j|kL} &&\iff \CI{i,k|L} \lor \CI{j,k|L},
\end{alignat}
for all distinct $i, j, k$ and $L \subseteq V \setminus ijk$. The following example shows that maxoids need not satisfy Weak Transitivity. By results of \cite{LnenickaMatus}, this provides a maxoid which cannot be faithfully represented by a regular Gaussian random vector. This phenomenon sets maxoids further apart from d-separation graphoids.

\begin{example}
Consider the diamond graph $\cg$ as described in \Cref{eg:diamond} with weight matrix $C$ satisfying $\omega_C(\pi_3) > \omega_C(\pi_2)$.
The maxoid of $(\cg, C)$ consists precisely of the d-separations in $\cg$ plus $\CI{1,4|3}$. %
As $\CI{1,4|3}$ and $\CI{1,4|2,3}$ hold without $\CI{1,2|3}$ or $\CI{2,4|3}$, this CI~structure violates Weak Transitivity and cannot be faithfully represented by a regular Gaussian distribution.
This violation of Weak Transitivity is reflected in the model geometry as follows. The space of regular Gaussian distributions which are Markov to this CI~structure is the union of two standard Bayesian networks on subgraphs of the diamond $\cg$: one has the edge $1 \to 2$ removed (so that it satisfies $\CI{1,2|3}$) and the other has the edge $2 \to 4$ removed (and hence satisfies $\CI{2,4|3}$). For an algebraic explanation of this phenomenon we refer to~\cite{Faithless}.
\end{example}

We were not able to find any apparent reasons why maxoids cannot be representable by discrete random vectors and pose an even stronger conjecture.

\begin{conjecture} \label{conj:Discrete}
Maxoids are representable by positive discrete distributions.
\end{conjecture}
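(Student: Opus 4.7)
The plan is to prove the conjecture by explicitly constructing, for every maxoid $\globalCsep{\cg}{C}$, a positive discrete distribution with the same CI structure. A natural starting point is to mimic the max-linear structural equations \eqref{eqn:mlbn} in the discrete setting: replace the atom-free continuous noise terms $Z_j$ by independent strictly positive discrete random variables (for instance geometric on $\mathbb{Z}_{\ge 0}$) and define $X_j$ recursively via the same max-plus recursion \eqref{eqn:log-mlbn}. This yields a distribution on a countable product state space which is positive on its support.

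First I would reduce to the transitively closed case using \Cref{lem:closurelemma}, so the construction only has to handle DAGs with $\cg = \overline{\cg}$. Then I would pick a representative $C'$ in the cone $\cone_\cg(C)$ with integer entries, which exists because the cone is full-dimensional and cut out by rational inequalities. With integer weights, the critical paths of $(\cg, C')$ governing which summands attain the maximum in the recursive definition are combinatorially determined, and match the continuous model on its generic locus.

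The verification then splits in two. For soundness, every $\Csepgiven{i}{j}{L}$ in the maxoid must translate into a genuine CI statement of the discrete distribution; this should follow by adapting the proof of \cite[Theorem~6.18]{MaxLinearCI}, whose essential content is that conditioning on $L$ severs the flow of information along critical paths, an argument that is combinatorial and does not rely on atom-freeness. For completeness, no extra CI statements may arise: whenever $\CI[\not\Cperp]{i,j|L}$ holds, one must exhibit a positive-probability event on which some $\ast$-connecting path actually transmits dependence between $X_i$ and $X_j$ given $L$, and here the positivity and heavy-tailedness of the discrete noise would be exploited.

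The main obstacle will be the completeness direction. Unlike in the continuous case, where coincidences among path weights occur only on a Lebesgue-null set and can be safely discarded, in the discrete setting ties carry positive probability and can introduce spurious conditional independencies. Controlling this would likely require a careful calibration of the tail parameters of the noise distributions relative to the spread of the entries of $C'$, so that ties along critical paths are rare enough not to affect the CI structure; a stretching of the entries of $C'$ by a large factor, combined with light-tailed noise supported on a short interval relative to that factor, is a plausible way to force the discrete model to behave, on its high-probability realizations, like the continuous max-linear one. If direct discretization proves too fragile, a backup approach is to build the distribution combinatorially by assigning probabilities to the generic realization types ``path $\pi$ is critical between $i$ and $j$'' indexed by the vertices of the maxoid polytope $P_\cg$ and verifying the maxoid's CI statements against this model vertex by vertex, using \Cref{cor:maxoidpolytopevertices} and \Cref{prop:faceunion} to control face-wise behavior.
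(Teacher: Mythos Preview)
The statement you are attempting is a \emph{conjecture}, not a theorem: the paper does not prove it and explicitly leaves it open. The only support the paper offers is indirect (maxoids satisfy Intersection and the Spohn property, which are necessary conditions for positive-discrete representability) together with a computer verification on four nodes using Studen\'y's axioms. So there is no ``paper's own proof'' to compare against; what you have written is a research plan for an open problem, and you yourself flag the main obstacle without resolving it.

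On the substance of your plan, there is a genuine gap in the completeness direction beyond what you acknowledge. In the discrete setting, an event that occurs with positive probability cannot be dismissed as ``rare enough not to affect the CI structure'': conditional independence is a global equality of measures, and any positive-probability configuration contributes to it. Thus calibrating tail parameters so that ties along critical paths are \emph{unlikely} does not prevent them from creating (or destroying) CI statements; to make the argument go through you would need ties to have probability \emph{zero}, which is impossible for discrete noise with full support. A related issue is that your construction with geometric noise produces a distribution on a countably infinite state space, whereas ``positive discrete'' in the CI literature (and in Studen\'y's framework invoked by the paper) customarily means a strictly positive density on a \emph{finite} product space; your proposal does not address how to truncate to finite support without losing faithfulness. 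Your backup idea of building a distribution combinatorially from the vertices of $P_\cg$ is vague as stated: the vertices index maxoids, not atoms of a sample space, and it is unclear what probability model on what state space is meant. In short, the plan identifies reasonable ingredients but does not close the gap that keeps the conjecture open.
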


This conjecture is reminiscent of the open problem of Studený \cite[Question~3, p.~191]{Studeny} concerning the representability of regular Gaussian CI~structures by discrete (or even positive binary) distributions.
In support of this conjecture we proved that maxoids satisfy basic CI~properties of positive distributions: Intersection and the Spohn property. We have also verified that all maxoids on four nodes are indeed representable by (not necessarily positive) discrete distributions using the complete axiomatization provided by Studený in (E:1)--(E:5) and (I:1)--(I:19) of \cite[Section~V]{StudenyIngleton}.

\begin{remark}
An earlier version of this manuscript contained a claimed counterexample to \Cref{conj:Discrete}. However, the underlying computation was performed in two stages which used mutually inconsistent labels for the involved nodes. This led to a false~positive.
Our software package supports further experimentation on this conjecture. The source code of our computations in support of \Cref{conj:Discrete} is available in our data~repository.
\end{remark}

\section*{Acknowledgements}

\setlength{\intextsep}{5pt}%
\setlength{\columnsep}{5pt}%
\begin{wrapfigure}{R}{0.13\linewidth}
\vspace{-.5\baselineskip}%
\centering%
\href{https://doi.org/10.3030/101110545}{%
\includegraphics[width=0.9\linewidth]{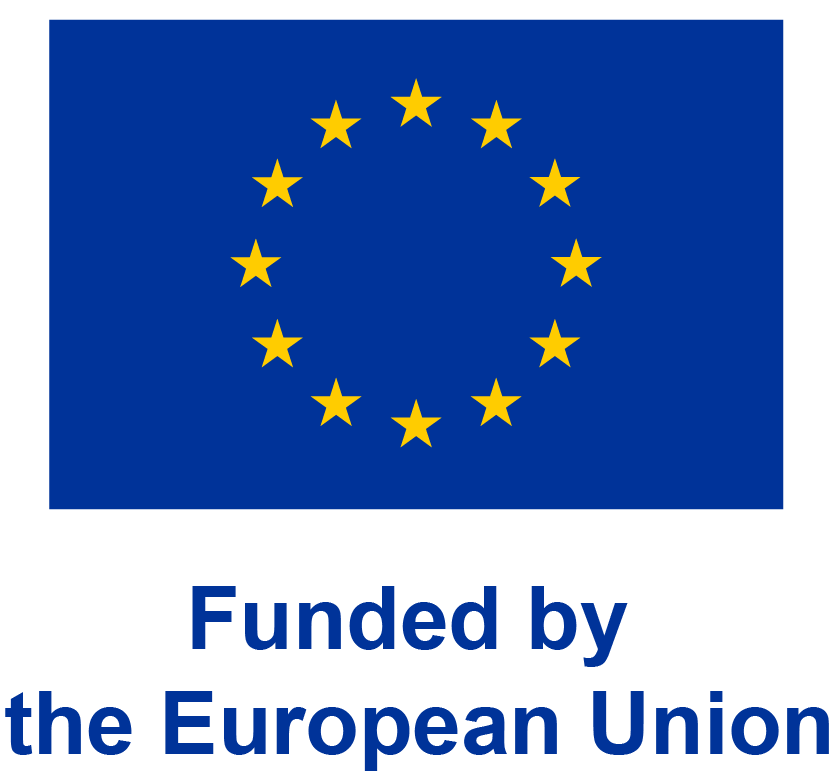}%
}
\end{wrapfigure}
T.B.~was funded by the European Union's Horizon 2020 research and innovation
programme under the Marie Skłodowska-Curie grant agreement No.~101110545.
K.F.~was funded by the Deutsche Forschungsgemeinschaft (DFG, German Research Foundation) under Germany's Excellence Strategy --- The Berlin Mathematics Research Center MATH+ (EXC-2046/1, project ID: 390685689).
B.H.~was supported by the Alexander von Humboldt Foundation.
F.N.~was funded by the Deutsche Forschungsgemeinschaft (DFG, German Research Foundation)
under the Priority Programme “Combinatorial Synergies” (SPP 2458, project ID: 539875257).

\bibliographystyle{plain}
\bibliography{maxlin}

\begin{appendices}
\begin{section}{Polyhedral and Tropical Geometry}

In this section, we provide an overview of the notions from polyhedral and tropical geometry which appear in this text. Especially in the parts on tropical geometry, we sacrifice complete mathematical rigor for the sake of readability. For comprehensive introductions of the notions used in this paper, we refer to Chapters 1 and 7 of \cite{Ziegler} and Chapters 1 and 2 of \cite{JoswigBook} respectively.
\subsection{Cones and Fans} \label{app:conesandfans}
Let $u \in \mathbb{R}^n \setminus \{ 0 \}$.
 The \emph{hyperplane} in $\mathbb{R}^n$ with normal vector $u$ is
    \begin{equation*}
        H_{u} := \bigl\{ x \in \mathbb{R}^n \ : \ \langle x, u \rangle = 0 \ \bigr\}.
    \end{equation*}
The ($n$-dimensional, closed, positive) \emph{half-space} defined by $u$ is
        \begin{equation*}
        H^{+}_{u} := \bigl\{ x \in \mathbb{R}^n \ : \ \langle x, u \rangle \geq 0 \ \bigr\}.
    \end{equation*}
    We call a set $C \subset \mathbb{R}^n$ a \emph{polyhedral cone} if there exists a finite collection of half-spaces $H_{u_1}$, ..., $H_{u_k}$ such that
\begin{equation} \label{hdescription}
    C = H^+_{u_1} \cap H^+_{u_1} \cap H^+_{u_2} \dots \cap H^+_{u_k}.
\end{equation}
     The dimension of $C$ is the dimension of the smallest vector space in which is it is fully contained.
     A \emph{supporting hyperplane} to $C$ is a hyperplane $H_u$ with $u \in \mathbb{R}^n \setminus \{ 0 \}$ such that
    \begin{center}
        $C = C \cap H^{+}_u \ \ \ \ \ $ (or equivalently, $\langle u, x \rangle \geq 0 $ for all $x \in C$).
    \end{center}
     A (proper) \emph{face} of $C$ is a set $F$ of the form
    \begin{center}
        $F = C \cap H_{u} \ \ \ \ \ $ , where $H_u$ is a supporting hyperplane of $C$.
    \end{center}
     The \emph{dimension} of a face $F$ is the dimension of $F$ as a cone.
     Faces of $C$ of dimension $\dim(C) -1$ are called \emph{facets}.
A polyhedral \emph{fan} in $\mathbb{R}^n$ is a finite collection $\mathcal{F}$ of non-empty polyhedral cones with the following two properties:
\begin{enumerate}[label = \textit{(\roman*)}]
    \item Every non-empty face of a cone in $\mathcal{F}$ is in $\mathcal{F}$.
    \item The intersection of any two cones is a face of both.
\end{enumerate}
    We say that $\mathcal{F}$ is \emph{complete} if
\begin{equation*}
    \mathbb{R}^n = \bigcup_{C \in \mathcal{F}} C,
\end{equation*}
whereas $\mathcal{F}$ is said to be \emph{pure} if all of its maximal cones are of the same dimension.
Complete fans are examples of polyhedral subdivisions of $\mathbb{R}^n$. A subclass of complete fans are those generated by \emph{hyperplane arrangements}, i.e. finite collections of linear hyperplanes. The combinatorics of the fan are determined by the intersecting of the hyperplanes. However, not every complete fan arises in this way.

\begin{subsection}{Polytopes and normal fans} \label{app:polytopes}
The \emph{affine hyperplane} in $\mathbb{R}^n$ with normal vector $u$ and displacement $d \geq 0$ is
    \begin{equation*}
        H_{u,d} := \bigl\{ x \in \mathbb{R}^n \ : \ \langle x, u \rangle = d \ \bigr\}.
    \end{equation*}
The corresponding \emph{(non-negative) affine half-space is}
\begin{equation*}
    H^+_{u,d} := \bigl\{ x \in \mathbb{R}^n \ : \ \langle x, u \rangle \geq d \ \bigr\}.
\end{equation*}
A \emph{polytope} $P$ in $\mathbb{R}^n$ is a bounded intersection of affine half-spaces, i.e. a set of the form
\begin{equation*}
     P = H^+_{u_1, d_1} \cap H^+_{u_2,d_2} \cap H^+_{u_3,d_3} \dots \cap H^+_{u_k,d_k}.
\end{equation*}
Analogously to the case of cones, a face of a $P$ is a set of the form $P \cap H_{u,d} $, where $H_{u,d}$ is a supporting affine hyperplane. Faces of polytopes are once again polytopes. Faces of dimensions 1, 2 and $\dim P -1$ are called \emph{vertices}, \emph{edges} , and \emph{facets} respectively. Some readers may be more familiar with polytopes as convex hulls of finitely many points: this notion and the half-space description presented here are equivalent (see Theorem 2.15 of \cite[p.~65]{Ziegler}).

Every polytope $P \subset \mathbb{R}^n$ is associated to a polyhedral fan in the dual space called the \emph{normal fan}, which is constructed as follows. For each non-empty face $F$ of $P$ consider the set of linear functionals $c \in (\mathbb{R}^n)^\ast$ which attain their maximal value in $P$ along $F$. That is,
\begin{equation*}
    N_F := \{ c \in (\mathbb{R}^n)^\ast: F \subset \{x\in P: \langle c, x \rangle = \max \langle c, y \rangle : y \in  P\} \}.
\end{equation*}
These sets are polyhedral cones in the sense of the previous section, and taken over all faces they form a fan in $\mathbb{R}^n$ which we denote $\mathcal{N}(P)$. The fan is complete, and if the polytope is full-dimensional then its normal fan is pure. There is an inclusion-reversing bijection between the $k$-dimensional faces of $P$ and the $(n-k)$-dimensional faces of $\mathcal{N}(P)$. In particular, each vertex of $P$ corresponds to a full-dimensional cone in $\mathcal{N}(P)$.

\subsection{Polynomials and Tropical Hypersurfaces} \label{app:trop}
Let $f \in \mathbb{R}[x_1,\dots, x_n]$ be a polynomial in $n$ variables with coefficients in a field $\mathbb{R}$. The \emph{ (max-plus) tropicalization} $Trop(f)$ of $f$ is the function obtained by replacing regular addition and multiplication with with the tropical operations $ \oplus := \max$ and $\odot := +$ respectively.
\begin{example} \label{eg:troppoly}
Let $f = xy + zw \in \mathbb{R}[x,y,z,w]$. Its tropicalization is the tropical polynomial
\begin{equation} \label{eqn:tropf}
    Trop(f) := x \odot y \oplus z \odot w = \max \{x+y, z+ w \}
\end{equation}

\end{example}
Seen as a function $\mathbb{R} \to \mathbb{R} $, $Trop(f)$ is a piecewise linear function. The \emph{tropical hypersurface} $\mathcal{T}(Trop(f))$ is the set of points where the maximum is attained by more than one term of $Trop(f)$. For the function $f$ defined as in \Cref{eg:troppoly}, its tropical hypersurface is the hyperplane $\{ x + y = z + w \}$. Every tropical hypersurface induces a \emph{polyhedral subdivision} of $\mathbb{R}^n$, the co-dimension 1 skeleton of which is the hypersurface itself. The full-dimensional cells of the subdivision correspond to the regions of $\mathbb{R}^n$ on which the value of $Trop(f)$ is attained by a unique term. \\
The \emph{Newton polytope} of a polynomial $f$ is the convex hull of the integer vectors corresponding to the exponents of its terms.
\begin{example} \label{eg:newtonpolytope}
    The Newton polytope of the polynomial $f$ from \cref{eg:troppoly} is \begin{equation}
        \Newt(f) := conv( \begin{pmatrix}
            1 \\ 1 \\ 0 \\ 0
        \end{pmatrix} , \begin{pmatrix}
            0 \\ 0 \\ 1 \\ 1 \end{pmatrix} )
    \end{equation}
    It is a 1-dimensional polytope in $\mathbb{R}^4$ (i.e. a line segment).
\end{example}
The tropical hypersurface induced by a polynomial $f$ with constant coefficients can be described in terms of $\Newt(f)$:
\begin{theorem}[{\cite[Corollary 1.21]{JoswigBook}}]\label{thm:jos}
Let $F = Trop(f)$ be a max-tropical polynomial with constant coefficients. Then the polyhedral subdivision induced by tropical hypersurface $\mathcal{T}(F)$ is precisely the outer normal fan of $\Newt(f)$.
\end{theorem}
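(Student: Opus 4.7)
The plan is to describe the subdivision induced by $\mathcal{T}(F)$ cell by cell and match it up with the outer normal fan of $\Newt(f)$. Writing $f = \sum_{\alpha \in A} c_\alpha x^\alpha$ with nonzero constant coefficients, tropicalization absorbs the constants (the tropicalization of a nonzero constant is $0$, as illustrated in \Cref{eg:troppoly}), so that $F(x) = \max_{\alpha \in A} \langle \alpha, x\rangle$. For each $x \in \mathbb{R}^n$, I would let $S(x) \subseteq A$ denote the set of exponents at which this maximum is attained, and take the relatively open sets $\{x \in \mathbb{R}^n : S(x) = S\}$, as $S$ ranges over subsets of $A$ which are actually realised, as the candidate cells of the subdivision.

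The first step is to observe that, because $\langle \cdot, x\rangle$ is linear, its maximum over the finite set $A$ agrees with its maximum over $\Newt(f) = \mathrm{conv}(A)$, and this maximum is attained on a (unique) face $G_x$ of $\Newt(f)$. Consequently $S(x) = A \cap G_x$, and conversely, for any face $G$ of $\Newt(f)$, the set $\{x \in \mathbb{R}^n : G_x = G\}$ is precisely the relative interior of the outer normal cone $N_G$ recalled in \Cref{app:polytopes}. This sets up an inclusion-reversing bijection between faces of $\Newt(f)$ and cells of my candidate subdivision, identifying each open cell with the relative interior of a normal cone.

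Next I would compare this to the tropical hypersurface itself. By construction $\mathcal{T}(F)$ is the locus where $|S(x)| \geq 2$, which under the correspondence above is the union of the relative interiors of normal cones to positive-dimensional faces of $\Newt(f)$; equivalently, it is the codimension-at-least-one skeleton of $\mathcal{N}(\Newt(f))$. The full-dimensional open cells of the induced subdivision are then in bijection with vertices of $\Newt(f)$, recovering the statement in the main text that each term of $F$ uniquely attaining the maximum picks out a unique vertex. Once one invokes the standard convex-analytic fact that the normal cones of all faces of a polytope form a complete polyhedral fan, the equality of the two subdivisions follows.

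The main obstacle I foresee is purely bookkeeping: elements of $A$ that are not vertices of $\Newt(f)$ (lattice points lying in the relative interior of some positive-dimensional face) never uniquely attain the maximum, so they contribute no full-dimensional cell, and one must check that they do not spuriously refine lower-dimensional cells either. A clean way to avoid this is to notice up front that $S(x)$ depends only on the face $G_x$, so different choices of support with the same Newton polytope yield exactly the same subdivision; this reduces the verification to the case where $A$ consists only of vertices, where the bijection between cells of the subdivision and normal cones is transparent.
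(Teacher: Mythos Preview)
Your argument is correct and is essentially the standard proof of this fact. Note, however, that the paper does not supply its own proof of this statement: it is quoted as \cite[Corollary~1.21]{JoswigBook} and simply used as a black box in the appendix. So there is no ``paper's proof'' to compare against; your write-up fills in exactly the convex-geometry argument that the cited reference would give, and the handling of non-vertex exponent vectors via the observation that $S(x)$ depends only on the face $G_x$ is the clean way to dispatch that bookkeeping issue.
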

\begin{example}
The normal fan of the polytope from \Cref{eg:newtonpolytope} is the fan in $\mathbb{R}^4$ determined by the plane $H = \{x + y = z +w\}$.
\end{example}
To study the polyhedral subdivision of $\mathbb{R}^n$ induced by a finite collection of tropical polynomials, we recall the notion of a \emph{Minkowski sum}: for two polytopes $P, Q \subset \mathbb{R}^n$, this is the set
\begin{equation} \label{eqn:minkowskisum}
    P + Q := \{ x + y \ , x \in P \ , y \in Q \} \ ,
\end{equation}
and is once again a polytope in $\mathbb{R}^n$. The Minkowski sum of finitely many polytopes is defined analogously. The operation of passing to the normal fan behaves nicely under Minkowski sums:

\begin{theorem}[{\cite[Prop.~7.12]{Ziegler}}]\label{thm:normalminkowskisum}
The normal fan of a Minkowski sum is the common refinement of the individual normal fans.
\end{theorem}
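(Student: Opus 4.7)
The plan is to exploit the fact that maximization of a linear functional decouples nicely across the summands of a Minkowski sum. Concretely, for any $c \in (\mathbb{R}^n)^\ast$, the identity
\[
  \max_{z \in P+Q} \langle c, z \rangle = \max_{x \in P} \langle c, x \rangle + \max_{y \in Q} \langle c, y \rangle
\]
is immediate from $\langle c, x+y \rangle = \langle c,x\rangle + \langle c,y\rangle$ together with the definition of the Minkowski sum. Moreover, the face of $P+Q$ on which $c$ attains this maximum equals $F_c(P) + F_c(Q)$, where $F_c(R)$ denotes the face of $R$ where $c$ is maximized. This decoupling observation is the core of the argument.

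From this, I would reconstruct the normal fan of $P+Q$ cone by cone. A cone of $\mathcal{N}(P)$ is by definition the equivalence class of functionals that all select the same face of $P$, and likewise for $\mathcal{N}(Q)$. Intersecting a cone $N_F(P)$ with a cone $N_G(Q)$ yields the (possibly empty) set of functionals simultaneously selecting $F$ in $P$ and $G$ in $Q$. By the decoupling identity, these are precisely the functionals selecting the face $F+G$ of $P+Q$, so this intersection is exactly the normal cone of $F+G$ in $\mathcal{N}(P+Q)$. Collecting all such non-empty intersections across all pairs $(F,G)$ produces exactly the cones of $\mathcal{N}(P+Q)$, which by definition is the common refinement of $\mathcal{N}(P)$ and $\mathcal{N}(Q)$. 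The extension to finitely many summands then follows by straightforward induction using associativity of both the Minkowski sum and the common refinement operation.

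The main subtlety worth addressing is that not every pair $(F,G)$ of faces yields a face of $P+Q$; the intersection approach handles this automatically, since such a pair contributes a face precisely when some functional maximizes over both summands simultaneously, i.e., exactly when $N_F(P) \cap N_G(Q)$ is non-empty. The remaining obstacle is largely bookkeeping: verifying that the collection of these intersections forms a fan (which it does, since intersections of cones from two fans always do) and that the correspondence between non-empty intersections and faces of $P+Q$ is inclusion-reversing in the expected way, matching the standard duality between faces of a polytope and cones of its normal fan.
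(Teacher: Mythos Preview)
Your argument is correct and is exactly the standard proof of this fact (and, in particular, is how Ziegler proves Proposition~7.12). Note, however, that the paper does not supply its own proof of this theorem: it is stated in the appendix purely as a citation to \cite{Ziegler}, so there is nothing in the paper to compare your proposal against beyond confirming that it matches the cited reference.
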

By \emph{common refinement} of two fans $\cf_1$ and $\cf_2$ we mean the collection of cones
\begin{equation*}
    \{ C_1 \cap C_2 , \ C_1 \in \cf_1 \ , \ C_2 \in \cf_2 \} \ ,
\end{equation*}
which is again a fan.

Combined with \Cref{thm:jos}, this fact gives the following characterization of the polyhedral subdivision induced by intersections of tropical hypersurfaces.

\begin{theorem} \label{thm:tropvarietyfan}
Let $f_1, \dots f_s \in \mathbb{R}[x_1 , \dots x_n]$ be polynomials with constant coefficients. The polyhedral subdivision of $\mathbb{R}^n$ induced by the tropical polynomials $F_i : = Trop(f_i)$ is the normal fan of the Minkowski sum of $\Newt(F_i)$, $i \in \{1, \dots s \}$.

\end{theorem}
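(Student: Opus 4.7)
The plan is to reduce the statement to the two tools already assembled in this appendix: \Cref{thm:jos} for the single-polynomial case and \Cref{thm:normalminkowskisum} governing normal fans under Minkowski sum. The intermediate step is to recognize that the subdivision of $\mathbb{R}^n$ induced jointly by several tropical hypersurfaces is precisely the common refinement of the subdivisions induced by each one individually.

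First I would unpack the definition of the subdivision jointly induced by a finite family $F_1,\ldots,F_s$ of tropical polynomials. Two points $x,x' \in \mathbb{R}^n$ lie in the same cell of the joint subdivision if and only if, for every index~$i$, the same subset of monomials of $F_i$ attains the maximum at $x$ and at~$x'$. This makes the joint subdivision agree, cell by cell, with the common refinement of the subdivisions associated to the individual $F_i$, in the sense of the common refinement of fans recalled immediately before the statement.

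Next, by \Cref{thm:jos}, each individual subdivision coincides with the outer normal fan $\mathcal{N}(\Newt(f_i))$ because $F_i$ has constant coefficients. Combined with the previous paragraph, this shows that the joint subdivision equals the common refinement of the normal fans $\mathcal{N}(\Newt(f_1)),\ldots,\mathcal{N}(\Newt(f_s))$. Finally, \Cref{thm:normalminkowskisum} identifies this common refinement with the normal fan of the Minkowski sum $\sum_{i=1}^{s} \Newt(f_i)$, completing the proof.

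I do not expect any serious obstacle here. The only step that requires care is the observation that the joint subdivision equals the common refinement of the individual subdivisions, but this is essentially tautological: a cell in the joint subdivision is determined by recording, for each~$i$, which monomials of~$F_i$ attain the maximum, so the joint cells are exactly the nonempty intersections of the individual cells.
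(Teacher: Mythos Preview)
Your proposal is correct and follows exactly the paper's approach: the paper's own proof is a single sentence stating that the result follows directly from combining \Cref{thm:jos} and \Cref{thm:normalminkowskisum}. Your write-up simply spells out the intermediate common-refinement step that the paper leaves implicit.
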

\begin{proof}
    This follows directly from combining \Cref{thm:jos} and \Cref{thm:normalminkowskisum}.
\end{proof}
\end{subsection}
\end{section}

\begin{section}{Gr\"obner bases and fans}\label{app:GB}
In this section, we provide a short review of the terminology of Gr\"obner bases and Gr\"obner fans.

Fix a vector \(w\in\mathbb{R}^n_{\geq 0}\). A \emph{partial weight ordering} $<_w$ on \(R = \mathbb{R}[x_1,\dots,x_n]\)
is a relation on the monomials of \(R\) defined by
\[ x^\alpha <_w x^\beta : \iff \langle \alpha , w \rangle < \langle \beta , w \rangle. \]
This is not a total ordering in general but if it is, we say that \(<_w\) is a \emph{term ordering} on~\(R\).

The \emph{initial form} \(\mathrm{in}_w(f)\) of \(f\) with respect to \(w\) is defined as the sum of
the terms of \(f\) which are maximal with respect to $<_w$. The \emph{initial ideal} of a non-zero ideal \(I \trianglelefteq R\)
with respect to \(<_w\) is \[
  \mathrm{in}_w(I) = \langle\, \mathrm{in}_w(f) \mid f\in I \,\rangle.
\] If \(<_w\) is a term ordering, then \(\mathrm{in}_w(f)\) is always a monomial and also called the \emph{initial term}.
In this case, \(\mathrm{in}_w(I)\) is a monomial ideal.

A finite set $G = \{ g_1, \dots g_s\} \subset R$ of polynomials is called a \emph{Gröbner basis}
for an ideal $I$ with respect to the term ordering $<_w$ if \(I=\langle G\rangle\) and
\begin{equation*}
     \langle\, \mathrm{in}_w(g_1) \dots \mathrm{in}_w(g_s) \,\rangle = \mathrm{in}_w(I).
\end{equation*}

The equivalence classes of weight vectors with respect to the initial ideal \(\mathrm{in}_w(I)\) form a polyhedral fan
called the \emph{Gr\"obner fan}. This has been introduced by Mora and Robbiano~\cite{Mora.Robbiano:1998}.
In fact, the weight vectors \(w\in\mathbb{R}^n\) for which \(<_w\) is a term ordering lie in the interiors of full-dimensional cones,
corresponding to each monomial ideal \(\mathrm{in}_w(I)\).

In general, different choices of term orderings \(<w\) yield different Gr\"obner bases for a fixed ideal \(I\).
A finite subset \(G\subset I\) which is a Gr\"obner basis with respect to every term ordering \(<_w\) on \(\mathbb{R}[x_1,\dots,x_n]\)
is called \emph{universal}. Such a Gr\"obner basis is particularly useful for computing the Gr\"obner fan of an ideal.
\end{section}
\end{appendices}

\makecontacts

\end{document}